\documentclass[12pt,a4paper]{amsart}
\usepackage[foot]{amsaddr}

\makeatletter
\usepackage[english]{babel}
\usepackage[utf8]{inputenc}
\usepackage{csquotes}
\usepackage[T1]{fontenc}
\usepackage{latexsym}
\usepackage[leqno]{amsmath}
\usepackage{amssymb,amsthm,amsfonts}
\usepackage{mathtools}
\usepackage{mathrsfs}
\usepackage{dsfont}
\usepackage{stmaryrd}
\usepackage{cancel}
\usepackage{xcolor}
\usepackage{comment}

\usepackage{todonotes}

\usepackage{geometry}
\usepackage{nicefrac}

\usepackage{enumerate}
\usepackage[shortlabels]{enumitem}
\setlist[enumerate]{label=(\alph*),font=\normalshape}
\setlist[itemize]{font=\normalshape}
\let\originalitem\item
\renewcommand{\item}[1][]{%
	\if\relax\detokenize{#1}\relax%
		\originalitem%
	\else%
		\originalitem[#1]%
		\phantomsection
		\def\@currentlabel{#1}
	\fi%
}

\usepackage{hyperref}

\usepackage[noabbrev,capitalize]{cleveref}

\newcommand{\refcheckize}[1]{%
  \expandafter\let\csname @@\string#1\endcsname#1%
  \expandafter\DeclareRobustCommand\csname relax\string#1\endcsname[1]{%
    \csname @@\string#1\endcsname{##1}\wrtusdrf{##1}}%
  \expandafter\let\expandafter#1\csname relax\string#1\endcsname
}
\refcheckize{\cref}
\refcheckize{\Cref}

\usepackage[style=numeric-comp]{biblatex}

\let\originalleft\left
\let\originalright\right
\renewcommand{\left}{\mathopen{}\mathclose\bgroup\originalleft}
\renewcommand{\right}{\aftergroup\egroup\originalright}

\definecolor{kitgreen}{RGB}{0,150,130}
\definecolor{kitblue}{RGB}{70,100,170}
\definecolor{kitmaygreen}{RGB}{140,182,60}
\definecolor{kityellow}{RGB}{252,229,0}
\definecolor{kitorange}{RGB}{223,155,27}
\definecolor{kitbrown}{RGB}{167,130,46}
\definecolor{kitred}{RGB}{162,34,35}
\definecolor{kitpurple}{RGB}{163,16,124}
\definecolor{kitcyanblue}{RGB}{35,161,224}

\allowdisplaybreaks

\theoremstyle{plain}
\newtheorem{theorem}{Theorem}[section]
\newtheorem{definition}[theorem]{Definition}
 
\newtheorem{lemma}[theorem]{Lemma}

\theoremstyle{definition}
\newtheorem{remark}[theorem]{Remark}
\newtheorem{example}[theorem]{Example}
\newcommand{\R}{\mathbb{R}} 
 
\newcommand{\Z}{\mathbb{Z}}

\newcommand{\N}{\mathbb{N}} 
\newcommand{\No}{\mathbb{N}_0}
\newcommand{\Nodd}{\mathbb{N}_\mathrm{odd}}

\newcommand{\id}{I} % identity
\DeclareMathAlphabet{\othermathbb}{U}{bbold}{m}{n}

\newcommand{\der}[2][]{\@ifnextchar\der{\,#1\mathrm{d}#2\!}{\,#1\mathrm{d}#2}}

\DeclareMathOperator*{\esssup}{ess~sup}

\newcommand{\ee}{\mathrm{e}}
\newcommand{\ii}{\mathrm{i}}
\let\eps\varepsilon

\newcommand{\set}[1]{{\left\{ #1 \right\}}}

\newcommand{\abs}[1]{\left\lvert #1 \right\rvert}

\newcommand{\norm}[1]{\left\lVert #1 \right\rVert}

\newcommand{\nnorm}[1]{\left\vert\kern-0.25ex\left\vert\kern-0.25ex\left\vert #1 \right\vert\kern-0.25ex\right\vert\kern-0.25ex\right\vert}

\newcommand{\ip}[2]{\left\langle #1 , #2 \right\rangle}

\newcommand{\pdv}[3][]{%
	\if\relax\detokenize{#1}\relax%
		\frac{\partial #2}{\partial #3}%
	\else%
		\frac{\partial^{#1} #2}{\partial {#3}^{#1}}%
	\fi%
}
\newcommand{\dv}[3][]{%
	\if\relax\detokenize{#1}\relax%
		\frac{\mathrm{d} #2}{\mathrm{d} #3}%
	\else%
		\frac{\mathrm{d}^{#1} #2}{\mathrm{d} {#3}^{#1}}%
	\fi%
}

\newlength{\negph@wd}
\newcommand{\negphantom}[1]{%
	\ifmmode
		\mathpalette\negph@math{#1}%
	\else
		\negph@do{#1}%
	\fi
}
\newcommand{\negph@math}[2]{\negph@do{$\m@th#1#2$}}
\newcommand{\negph@do}[1]{%
	\settowidth{\negph@wd}{#1}%
	\hspace*{-\negph@wd}%
}

\newcommand{\clonelabel}[2]{\@bsphack
	\expandafter\ifx\csname r@#2\endcsname\relax
	\else\protected@write\@auxout{}{\string\newlabel{#1}%
		{\csname r@#2\endcsname}}%
	\fi
	\expandafter\ifx\csname r@#2@cref\endcsname\relax
	\else\protected@write\@auxout{}{\string\newlabel{#1@cref}%
		{\csname r@#2@cref\endcsname}}%
	\fi
	\@esphack}

\newcommand{\begin@color}[1]{\begingroup\color{#1}} 
\newcommand{\beginold}{\begin@color{red}}
\newcommand{\beginnew}{\begin@color{blue}}

\let\end@color\endgroup
\let\endold\end@color
\let\endnew\end@color

\newcommand{\dxsquare}{\dv[2]{}{x}}
\newcommand{\dtsquare}{\dv[2]{}{t}}

\newcommand{\loc}{\mathrm{loc}}
\newcommand{\per}{\mathrm{per}}
\newcommand{\ess}{\mathrm{ess}}

\newcommand{\calM}{\mathcal{M}}

\renewcommand{\S}{\mathbb{S}}

\newcommand{\Lqtvd}{L^{\tilde{q}}_{Vd}}
\newcommand{\Hs}{\mathcal{H}}
\newcommand{\dxt}{\ \mathrm{d}(x,t)}

\newcommand{\dx}{\ \mathrm{d}x}
\newcommand{\dt}{\ \mathrm{d}t}

\makeatother

\bibliography{bibliography}

\begin{document}
	\title[Rogue waves for semilinear wave equations]{Rogue waves for semilinear wave equations}

	\author{Julia Henninger$^1$}
	\email{julia.henninger@kit.edu}
	\address{$^1$Institute for Analysis, Karlsruhe Institute of Technology (KIT), D-76128 Karlsruhe, Germany}
	
	\date{\today}
	\subjclass[2000]{Primary: 35L71, 37K58; Secondary: 35A15, 35L76}
    % 35L71 2nd order semilinear hyperbolic
    % 35L76 higher order semiinear hyperbolic
    % 37K58 Variational principles and methods for infinite-dimensional Hamiltonian and Lagrangian systems
    % 35A15 Variational methods applied to PDEs
	\keywords{Semilinear hyperbolic equation, rogue waves, variational method, concentration compactness, spectral gaps}

	\begin{abstract}
		We study the semilinear wave equation $ V(x) \partial_t^2 u + d(t) M(x,\nabla_x) u=\tilde{V}(x) \tilde{d}(t) |u|^{p-1}u$ on $ \R^N \times \R$ and show the existence of solutions which are localized in space and in time, called rogue waves, by means of variational methods. 
We introduce an energy functional on a suitable Hilbert space, and provide sufficient conditions on the coefficients $V, \tilde{V}, d, \tilde{d}$, the elliptic operator $M$ and $p>1$ for the existence of a critical point. Our approach is based on a detailed analysis of the wave type operator and in particular its spectral properties. Further regularity considerations show that critical points are weak solutions to our equation. Moreover, we provide examples of the coefficients and the elliptic operator which satisfy our assumptions.

    \end{abstract}

	\maketitle
 %   \tableofcontents
%\newpage

	\section{Introduction and main result}

We study the semilinear wave equation 
\begin{align}\label{eq:main}
    V(x) \partial_t^2 u + d(t) M(x,\nabla_x) u=\tilde{V}(x) \tilde{d}(t) |u|^{p-1}u, \quad (x,t) \in \R^N \times \R
\end{align}
for the existence of rogue wave solutions under suitable conditions.
Rogue waves to \eqref{eq:main} are real-valued solutions which are localized in space and in time, i.e. $\lim_{|(x,t)| \to \infty}u(x,t)=0$.
In general, they are a rare nonlinear phenomenon and not yet well understood in a mathematically rigorous sense for wave-type equations such as \eqref{eq:main}.
The likely most well-known example for a rogue wave type solution is the Peregrine soliton. In \cite{pere} Peregrine studied the one-dimensional, cubic, focusing NLS
$\ii q_t + \tfrac{1}{2}q_{xx}+|q|^2 q=0$ for $(x,t)\in \R \times \R $
and provided an explicit solution $q(x,t)=\left(1-\frac{4(1+2\ii) t)}{1+4 x^2+4 t^2}\right) \ee^{\ii t}$ with $|q(x,t)| \to 1$ as $|(x,t)| \to \infty$. Note that this solution is localized with respect to the constant background $1$ and not $0$ as we introduced in the notion of rogue waves.
Inspired by this example, further results on rogue waves were achieved for various types of equations by different approaches. 
In the setting of integrable systems we would like to mention the books by Yang and Yang \cite{yangyang} and by Guo et al \cite{gtylw} and the contained references. There, they physically motivate numerous equations and construct rogue waves for them using different methods such as the Darboux transform, the bilinear method and inverse scattering theory. Further we point out the papers by Chen, Pelinovsky and White, where they give explicit examples of rogue waves to derivative NLS \cite{chen2}, focusing NLS \cite{chen3} and modified Korteweg–de Vries \cite{chen1}.
In the context of nonintegrable systems different techniques  are required to construct solutions and in particular rogue wave solutions. 
For the Benney-Roskes model and nonlocal NLS equation, Kevrekidis et al, proved the existence of rogue waves using a perturbative method, see \cite{kev1,kev2,kev3}. 
In the work of Schneider and Thorin, see \cite{st}, they demonstrated an approximation result for the Peregrine soliton in nonlinear dispersive wave equations. 
Further, Plum and Reichel constructed an explicit rogue wave solution for a semilinear curl-curl wave equation in dimension three \cite{pr}.
Lastly we would like to mention that there exist many contributions about rogue waves in a physical context, for example by E. Pelinovsky, Kharif and Slunyeav \cite{peli} on oceanic rogue waves.

For our purposes we work with the weighted $L^q$-spaces $L^q_V\coloneqq L^q_V(\R^N)\coloneqq L^q(\R^N, V(x) \dx )$
and $L^q_{Vd}\coloneqq L^q_{Vd}(\R^N\times \R)\coloneqq L^q(\R^N \times \R, V(x)d(t) \dxt )$ where $V$ and $d$ are measurable, bounded and non-negative functions.
We use the notation $\lesssim$ for inequalities up to constants, more precisely, $a \lesssim b$ means that there exists a constant $c>0$ such that $a \leq c b$ for $a,b \in \R$.

Our method is based on a detailed analysis of the spectral properties of the weighted wave operator 
\begin{align*}
    L\coloneqq \frac{1}{d(t)} \partial_t^2 + \frac{1}{V(x)} M(x, \nabla_x) .
\end{align*}
In general, we assume that $M$ is elliptic and $V,\tilde{V}, d, \tilde{d}$ are non-negative. 
More precisely we work under the following assumptions. 
\begin{enumerate}[label=(A\arabic*)]
    \item  $\frac{1}{V(x)}M(x,\nabla_x)$ is a self-adjoint operator on $L^2_V(\R^N)$ with distinct eigenvalues $0 \leq \nu_1 < \nu_2 < \nu_3 < \dots$. For each eigenvalue $\nu_n$ we denote the dimension of the eigenspace by $d_n$. The corresponding complete orthonormal system of $L^2_V(\R^N)$ is given by the associated eigenfunctions $(\varphi_{n,l})_{n \in \N, 1 \leq l \leq d_n}$.\label{as:MV} \label{as:first}
    \item There exists $K \in \N$ such that the eigenvalues $\nu_n$ satisfy $\# \set{n \colon \sqrt{\nu_n} \in B} \leq K$ for any interval $B$ of length $1$. \label{as:evM}
    \item There exist $\alpha \geq 0$ and $c >0$ such that $d_n \norm{\varphi_{n,l}}_\infty^2 \leq c \nu_n^{\alpha}$ for all $n \in \N$ and $1 \leq l \leq d_n$. \label{as:efestimateM}
    
    \item  $d, \tilde{d} \in L^\infty(\R)$ with $\ess \inf_\R d>0$, $\tilde{d}>0$ almost everywhere, and $d$ has locally bounded variation. \label{as:dgen}
    
    \item  There exist $T^\pm >0$, $R^\pm \in \R$ and $T^\pm$-periodic functions $d_\per^\pm \in L^\infty(\R)$ such that $d(t)=d_\per^+(t)$ for $t>R^+$ and $d(t)=d_\per^-(t)$ for $t<R^-$. \label{as:dper}
    
    \item There exists $r > \frac{1}{2}$ such that the point spectrum of 
    $- \frac{1}{d(t)} \dtsquare : H^2(\R) \subset L^2_d(\R) \to L^2_d(\R)$ fulfills
	\begin{align*}
		\sum_{\lambda \in \sigma_p(-\frac{1}{d(t)} \dtsquare)} \lambda^{-r} < \infty .
	\end{align*}
    \label{as:dpointspec}
    \item  $\displaystyle \inf\set{\abs{\sqrt{\lambda} - \sqrt{\nu_n}} \colon n \in \N, \lambda \in \sigma\left(-\frac{1}{d(t)} \dtsquare\right)} > 0 $. \label{as:last}
     \item $V, \tilde{V} \in L^\infty(\R^N)$ with $\ess \inf_{B_R(0)} V>0$ for all $R>0$ and $V \lesssim \tilde{V} \lesssim V$. \label{as:vgen}

    \item $d$ and $\tilde{d}$ satisfy additionally one of the following assumptions \label{as:9}
    \begin{enumerate}
        \item (compact case) $\lim_{|t|\to \infty}\tilde{d}(t)=0$. \label{as:a}
       \item (asymptotically periodic case) $d=d_\per$ is $T$-periodic on $\R$ and $\tilde{d}=\tilde{d}_\per + \tilde{d}_\loc$ where $\tilde{d}_\per$ is $T$-periodic and $\tilde{d}_\loc \geq 0$, $\lim_{|t|\to \infty}\tilde{d}_\loc(t)=0$.\label{as:b}
    \end{enumerate} \label{as:pert}
\end{enumerate}

We can now state our main result.
\begin{theorem}\label{thm:main}
    Let $p \in \left(1, 1+ \frac{1}{2\alpha}\right)$ and assume that \ref{as:first}--\ref{as:9} hold. Then \eqref{eq:main} has a non-trivial rogue wave solution $u$ in the sense of Definition \ref{def:sol} with $u \in L^q_{Vd}(\R^N \times \R)$ for all $q \in \left[2, 2+\frac{1}{\alpha}\right)$.
\end{theorem}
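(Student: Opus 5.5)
Set $q=p+1\in(2,2+\frac1\alpha)$ and use the dual variational method. Substituting $v=\tilde V\tilde d\,|u|^{p-1}u$, equivalently $u=L^{-1}$ applied to a suitably weighted $v$, turns \eqref{eq:main} into a fixed point problem $|v|^{q'-2}v\cdot(\text{weight}) = \mathcal{K}v$. Here $\mathcal K$ is the weighted inverse of $L$, and the natural space is $L^{q'}$ with weight $(\tilde V\tilde d)^{1-q'}$. Critical points of
\[
J(v)=\frac1{q'}\int |v|^{q'}(\tilde V\tilde d)^{1-q'} \dxt-\frac12\int v\,\mathcal K v\dxt
\]
then give solutions $u=\mathcal K v$, which is in the spirit of the "suitable Hilbert space" approach announced in the abstract. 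Alternatively one can use a primal functional on the form domain of $L$, split into the positive and negative spectral subspaces, and apply a generalized Nehari–Pankov linking.

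\textbf{Step 1: spectral analysis of $L$.} I expand in the eigenbasis $\varphi_{n,l}$ of \ref{as:MV}, which reduces $L$ to the family of 1D operators $\frac1d\partial_t^2+\nu_n$. Assumption \ref{as:last} guarantees $0\notin\sigma(L)$ with a uniform gap, so $L^{-1}$ is bounded on $L^2_{Vd}$. The real work is a bound $L^q_{Vd}\to L^{q'}_{Vd}$-dual type: I would show $\mathcal K: L^{q'}_{Vd}\to L^q_{Vd}$ is bounded for $q<2+\frac1\alpha$. To do this I split $\mathcal K$ into a spectrally localized part near the continuous spectrum of $-\frac1d\partial_t^2$ and a remainder. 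I use Floquet–Bloch theory from \ref{as:dper} and bounded variation from \ref{as:dgen} to get uniform $L^\infty$ bounds on the generalized eigenfunctions in $t$, and \ref{as:efestimateM} to get $\sum_l|\varphi_{n,l}|^2\lesssim\nu_n^\alpha$ in $x$. I then interpolate between $L^2\to L^2$, which holds by the gap, and $L^1\to L^\infty$ bounds on dyadic pieces $\sqrt{\nu_n}\in[2^k,2^{k+1})$. Assumption \ref{as:evM} controls the count of $n$ per dyadic piece, and \ref{as:dpointspec} makes the point-spectrum contribution summable. The restriction $q<2+\frac1\alpha$ is exactly what makes the dyadic sum converge. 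This step is the main obstacle: the $L^{q'}\to L^q$ resolvent estimate with uniform control in $n$.

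\textbf{Step 2: existence of a critical point.} The quadratic form $\int v\mathcal K v$ is indefinite, so I cannot minimize directly. I would instead run the mountain pass on $J$, which is convenient because $q'<2$ makes the first term dominate near $0$. A mountain pass geometry holds provided some $v_0$ has $\int v_0\mathcal K v_0>0$; I get this by testing with functions spectrally concentrated on the positive part of $L^{-1}$, which is nonempty thanks to the eigenvalues $\nu_n$. Palais–Smale sequences are bounded by the usual homogeneity argument. Compactness is handled by concentration compactness. In the $x$-direction, the discrete spectrum of $\frac1V M$ together with \ref{as:vgen} gives compactness on bounded sets. In the $t$-direction I treat the two cases of \ref{as:9}. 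In case \ref{as:a}, the decay of $\tilde d$ makes $v\mapsto \tilde d^{1/q}\mathcal K v$ compact, so a PS sequence has a convergent subsequence. In case \ref{as:b}, I use nonvanishing together with $T$-periodic translations of $\tilde d_\per$ to get a nonzero weak limit. I then compare with the periodic problem and use $\tilde d_\loc\ge0$ to show the level is not lost.

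\textbf{Step 3: regularity and localization.} For the critical point $v$, I set $u=\mathcal K v\in L^q_{Vd}$ and check that it is a weak solution in the sense of Definition \ref{def:sol}. I then bootstrap with the mapping property from Step 1 to get $u\in L^s_{Vd}$ for every $s\in[2,2+\frac1\alpha)$; this is possible because $|u|^{p-1}u\in L^{s/p}$, and $p<1+\frac1{2\alpha}$ keeps the exponents in range. Finally, the decay $\lim_{|(x,t)|\to\infty}u=0$ follows from these integrability bounds combined with local regularity estimates.
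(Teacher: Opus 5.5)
\paragraph{Overall assessment.} The existence part of your proposal is a plausible alternative route, but there is a genuine gap in Step 3: it does not deliver the integrability range or the weak-solution property that the theorem claims.

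\paragraph{Comparison with the paper.} The paper never inverts $L$ on Lebesgue spaces. It works with the primal functional $J$ on the form domain $\mathcal{H}$. It uses the embedding $\mathcal{H}\hookrightarrow L^q_{Vd}$ for $q\in[2,2+\frac1\alpha)$ (Theorem \ref{theom:emb}) and the Lions-type Theorem \ref{lem:cct}, and minimizes over the generalized Nehari manifold of Szulkin--Weth. Your dual mountain-pass scheme is plausible for existence. In the asymptotically periodic case, however, you would still have to prove a nonvanishing property for $\int v\,\mathcal{K}v$. In dual methods this is a genuine theorem, not a formality. The natural way to get it is to pass to $w=\mathcal{K}v$ and use a Lions lemma in the form domain, which is exactly the structure you bypass.

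\paragraph{Gap 1: integrability range.} Your Step 1 only gives $\mathcal{K}:L^{q'}_{Vd}\to L^q_{Vd}$ for dual pairs. Interpolating such bounds, including your $L^2\to L^2$ and $L^1\to L^\infty$ pieces, again yields only dual pairs. So from $|u|^{p-1}u\in L^{q'}_{Vd}$ with $q=p+1$ you recover $u\in L^{p+1}_{Vd}$ and nothing more. The bootstrap to all $L^s_{Vd}$, $s\in[2,2+\frac1\alpha)$, would need off-diagonal bounds (e.g.\ $L^{q'}\to L^2$) that you have not set up. In the paper this whole range is free because $u\in\mathcal{H}$.

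\paragraph{Gap 2: membership in $\mathcal{W}$.} Definition \ref{def:sol} requires $u\in\mathcal{W}$, that is, $\partial_t u\in L^2_V(\R^N\times\R)$ and $\sqrt{\frac{1}{V}M}\,u\in L^2_{Vd}(\R^N\times\R)$. Lebesgue bounds on $\mathcal{K}$ carry no derivative information, so ``check that it is a weak solution'' has nothing to rest on. The missing ingredient is the linear $L^2$-regularity of Lemma \ref{lem:reglin}:
\begin{itemize}
\item For $f\in L^2_{Vd}$, the gap estimate \eqref{eq:specest} gives $\sqrt{\nu_n}/|\lambda-\nu_n|\le c$, hence $\|\sqrt{\frac{1}{V}M}\,L^{-1}f\|_{L^2_{Vd}}\le c\|f\|_{L^2_{Vd}}$.
\item The identity \eqref{eq:bln}, via $\sum b_{L_n}(w_{n,l},w_{n,l})\ge -\|w\|_{\mathcal{H}}^2$, then bounds $\|\partial_t w\|_{L^2_V}$ by $\|\sqrt{\frac{1}{V}M}\,w\|_{L^2_{Vd}}$ and $\|w\|_{\mathcal{H}}$.
\item To apply this you need $|u|^{p-1}u\in L^2_{Vd}$, i.e.\ $u\in L^{2p}_{Vd}$. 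The condition $2p<2+\frac1\alpha$ is precisely the hypothesis $p<1+\frac{1}{2\alpha}$.
\end{itemize}
That is the role of this hypothesis, not an integrability bootstrap; existence alone only needs $p+1<2+\frac1\alpha$. Note also that $L^{2p}_{Vd}$ with $2p>p+1$ is exactly what Step 1 cannot reach.

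\paragraph{Pointwise decay.} Drop the claim that pointwise decay follows from ``local regularity estimates''. For a wave-type equation with merely bounded, BV-in-time coefficients there is no such smoothing. The paper does not assert pointwise decay; localization is understood as $u\in L^q_{Vd}$ together with Definition \ref{def:sol}.
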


\begin{definition}\label{def:sol}
    Consider the space 
    \begin{align*}
        \mathcal{W}\coloneqq \{ u \in L^{p+1}_{Vd}(\R^N \times \R), \partial_t u \in L^2_V(\R^N \times \R), \sqrt{\tfrac{1}{V}M}u \in L^2_{Vd}(\R^N \times \R) \} .
    \end{align*}
    A function $u \in \mathcal{W}$ is called a \textit{weak solution} to \eqref{eq:main} if
    \begin{align*}
        \int_{\R^N \times \R} - (\partial_t u) (\partial_t h) V(x) + \left(\sqrt{\tfrac{1}{V}M}u \right) \left(\sqrt{\tfrac{1}{V}M} h \right) V(x) d(t) \dxt = \int_{\R^N \times \R} |u|^{p-1} u h \tilde{V}(x) \tilde{d}(t) \dxt 
    \end{align*}
    for all $h \in \mathcal{W}$.
\end{definition}

\begin{remark}
\begin{enumerate}
    \item If $V$ and $\tilde{V}$ are symmetric we can formulate \ref{as:first} also for operators which are self-adjoint on symmetric subspaces of $L^2_V(\R^N)$ that are compatible with \eqref{eq:main}, cf.\@ Remark \ref{rem:exsym}.
     \item Assumption \ref{as:evM} implies that $(\nu_n)_{n \in \N}$ grows at least quadratically.
    \item  Assumption \ref{as:last} is central for our method. It guarantees that $L$ has a spectral gap around $0$ and is therefore invertible on suitable function spaces and moreover it provides embedding properties of theses spaces. 
\end{enumerate}
\end{remark}

\begin{remark} \label{rem:specest}
    By \ref{as:last} there exists a constant $c>0$ such that
    \begin{align}\label{eq:specest}
        \left|-\lambda + \nu_n\right| = | \sqrt{\lambda} - \sqrt{\nu_n}| | \sqrt{\lambda} + \sqrt{\nu_n}| \geq  c \sqrt{\nu_n}
    \end{align}
    for all $n \in \N$ and $\lambda \in \sigma\left(-\frac{1}{d(t)} \dtsquare\right)$. This estimate is used several times throughout this paper.
\end{remark}
We continue with presenting examples for $M$, $V$ and $d$ that satisfy our conditions \ref{as:first}--\ref{as:last}. The corresponding proofs are provided in Appendix \ref{sec:apex}. 
According to assumption \ref{as:vgen} and \ref{as:9} one can construct various examples for $\tilde{V}$ and $\tilde{d}$, based on $V$ and $d$, respectively, and therefore we do not give explicit examples for them.  
\subsection{Examples for $M$ and $V$}
\begin{example}[Quantum harmonic oscillator]\label{ex:qho}
    Let $N=1$, $M=\left(- \dxsquare +x^2\right)^2$ and ${V=1}$. Then assumptions \ref{as:MV}--\ref{as:efestimateM} hold with $\alpha=0$. In particular, we have $\nu_n=(2n+1)^2$, ${n \in \N \cup \{0\}}$.
\end{example}

\begin{example}[Weighted Laplacian]\label{ex:wl}
    Let $N\geq 2$, even and $M =-\Delta+ V(x)$ with ${V(x)= \frac{1}{(|x|^2+1)^2}}$. Then assumptions \ref{as:MV}--\ref{as:efestimateM} hold with $\alpha =\frac{N^2+2N-4}{2}$.  In particular, we have ${\nu_n=(2n +N-1)^2}$, $n \in \N \cup \{0\}$.
\end{example}
\begin{remark}\label{rem:exsym}
In the case of Example \ref{ex:wl}, we can study \eqref{eq:main} also in the spatial radially symmetric setting, assuming in particular $\tilde{V}$ being radially symmetric. Then assumptions \ref{as:MV}--\ref{as:efestimateM} hold with $\alpha=\frac{N-1}{2}$ and $d_n=1$ for all $n \in \N$, which provides a larger range for $p$. Nevertheless it is open if our solutions, obtained in the setting of Example \ref{ex:wl}, are radially symmetric. 
\end{remark}
With regard to assumption \ref{as:evM}, note that in Example \ref{ex:qho} and \ref{ex:wl}, we denote the eigenvalues by $\nu_n$ with $n \in \N \cup \{0\}$ as commonly found in the literature. 
\subsection{Examples for $d$}
We continue with examples for $d$. In view of assumption \ref{as:last}, we assume that $M$ and $V$ satisfy one of the Examples \ref{ex:qho} or \ref{ex:wl}, in particular, the eigenvalues are of the form $\nu_n=(2n+1)^2$.
Motivated by \cite{hor}, we consider $d$ to be a periodic step potential or a perturbation of it.
By $d_\per$ we define the positive periodic step potential
\begin{align}\label{eq:multistepV}
\begin{cases}
     d_\per(t) = a_i & \mbox{ for } t\in [\theta_{i-1}T, \theta_i T), i=1,\dots, m, \\
     d_\per(t)=d_\per(t+T) & \text{ for all } t \in \R ,
\end{cases}
\end{align}
where $0=\theta_0<\theta_1<\dots<\theta_m=1$ and $a_1,\dots, a_m>0$.

\begin{example}[periodic $m$-step potential]\label{lem:multistepcond} 

We abbreviate $q_i \coloneqq \sqrt{a_i}(\theta_{i}-\theta_{i-1})$ for $i=1,\dots, m$.
    Assume that $ q_i \in \frac{\pi}{2}\N$ for all $i=1,\dots,m$ and 
    $$
     q_{i_j}\in \frac{\pi}{2}\Nodd
    $$
    is satisfied for an even number of indices $1\leq i_1<i_2<\dots<i_{2m}\leq m$ and no others. 
    Further let
    \begin{align*}
    \alpha \coloneqq \frac{a_{i_1} a_{i_3}\cdots a_{i_{2m-1}}}{a_{i_2} a_{i_4} \cdots  a_{i_{2m}}} \not =1 .
   \end{align*}
    Then \ref{as:dgen}--\ref{as:last} hold for $ d_\per$.
\end{example}
\begin{remark}
    In the case $m=2$ the assumptions reduce to 
    \begin{align*}
        \sqrt{a_1}\theta_1, \sqrt{a_2} (1-\theta_1) \in \frac{\pi}{2} \Nodd \text{ with } a_1 \neq a_2.
    \end{align*}
\end{remark}

Beyond the purely periodic case we can also treat perturbations of it. One possible kind of perturbation is the interface of dislocated periodic potentials.
\begin{example}[Interface of dislocated periodic potentials]\label{ex:dislint}
Let $d_\per$ be given by \eqref{eq:multistepV} satisfying the conditions from Example \ref{lem:multistepcond} and for $d_0, b >0$ consider 
\begin{align*}
    d(t)=\begin{cases}
        d_\per(t),  & t <0,\\
        d_0,  & 0 \leq t < b, \\
        d_\per(t-b),  & b \leq t . \\
    \end{cases}
\end{align*}

Assume that $\sqrt{d_0}b \in \pi \N$. Then \ref{as:dgen}--\ref{as:last} hold. 
\end{example}

Another possible perturbation is the interface of two different purely periodic step potentials.
\begin{example}[Interface of different periodic potentials]\label{lem:interface} 
Let $d_\per^+ \neq d_\per^-$ both be given as in \eqref{eq:multistepV} satisfying the conditions from Example \ref{lem:multistepcond}.
Consider
\begin{align*}
    d(t) = \begin{cases}
        d^-_\per(t) , & t<0 , \\
        d^+_\per(t) , & t\geq 0 
    \end{cases}
\end{align*}
and assume additionally that  $\alpha^+, \alpha^- > 1$ or $\alpha^+, \alpha^- < 1$.
Then \ref{as:dgen}--\ref{as:last} hold.
\end{example}

Now, having concrete examples for $M$ and $V$ in hand, we can analyze the regularity properties of our solutions obtained by Theorem \ref{thm:main}. We have the following result.  
\begin{theorem}\label{them:reg}
    Let $u$ be a solution as in Theorem \ref{thm:main}.
    \begin{enumerate}
         \item Let $M$ and $V$ be as is Example \ref{ex:qho}. Then $\partial_t u, x u, x^2 u , \partial_x u , x \partial_x u , \partial_x^2 u \in L^2(\R\times \R)$.
    \item Let $M$ and $V$ be as is Example \ref{ex:wl}.
    Then $\partial_t u \in L^2_V (\R^N \times \R)$ and $\nabla_x u \in L^2_{d}(\R^N\times \R)$.
     \end{enumerate}
\end{theorem}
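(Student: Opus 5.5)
The plan is to start from the information contained in membership $u \in \mathcal{W}$, i.e.\ $\partial_t u \in L^2_V$ and $\sqrt{\tfrac1V M}\,u \in L^2_{Vd}$. Then, in each example, I would convert the abstract quadratic form $\norm{\sqrt{\tfrac1V M}u}_{L^2_V}^2=\ip{\tfrac1V M u}{u}_{L^2_V}=\ip{Mu}{u}_{L^2}$ into concrete derivative and weight bounds. Since $d$ satisfies $\ess\inf d>0$ and $d\in L^\infty$ by \ref{as:dgen}, the weight $d(t)$ is harmless: $L^2_{Vd}$ and $L^2_V(\R^N\times\R)$ have equivalent norms. So it suffices to show, for almost every $t$, a pointwise-in-time bound of the concrete quantities by $\ip{Mu(\cdot,t)}{u(\cdot,t)}_{L^2_V}$ (plus $\norm{u(\cdot,t)}_{L^2_V}$ if needed), and then integrate in $t$. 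The claim $\partial_t u\in L^2_V$ is immediate from the definition of $\mathcal{W}$; in (a) $V=1$, so this gives $\partial_t u\in L^2(\R\times\R)$.

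For (a), $M=H^2$ with $H=-\dxsquare+x^2\ge 1$. Hence $\ip{Mu}{u}=\norm{Hu}_{L^2}^2$, and in the eigenbasis (Hermite functions) $\norm{\sqrt{M}u}=\norm{Hu}$. For $f$ in the form domain, i.e.\ $Hf\in L^2$, I would show $\norm{f''}^2+\norm{x^2 f}^2+\norm{xf'}^2+\norm{f'}^2+\norm{xf}^2\lesssim \norm{Hf}^2$. The route is through the ladder operators $a=x+\partial_x$, $a^*=x-\partial_x$ with $H=a^*a+1$. First, $x,\partial_x,x^2,x\partial_x,\partial_x^2$ are all linear combinations of $a,a^*,a^2,(a^*)^2,aa^*,a^*a,1$. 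Second, $\norm{a f}^2,\norm{a^*f}^2\lesssim \ip{Hf}{f}$ and $\norm{a^2f}^2,\norm{(a^*)^2f}^2,\norm{a^*af}^2\lesssim \norm{Hf}^2$, which follows by expanding $f=\sum c_n h_n$ and using $a h_n=\sqrt{2n}h_{n-1}$, $a^*h_n=\sqrt{2n+2}h_{n+1}$. One also needs $u\in L^2$, which holds because $H\ge1$. Applying this with $f=u(\cdot,t)$ and integrating in $t$ gives (a); $x u\in L^2$ is covered as well.

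For (b), I would use the quadratic form of $M=-\Delta+V$: for $f$ in the form domain, $\ip{Mf}{f}_{L^2}=\norm{\nabla f}_{L^2}^2+\norm{f}_{L^2_V}^2$. Taking $f=u(\cdot,t)$ gives $\norm{\nabla_x u(\cdot,t)}_{L^2}^2\le \norm{\sqrt{\tfrac1VM}u(\cdot,t)}_{L^2_V}^2$. Integrating against $d(t)\,dt$ yields $\nabla_x u\in L^2_d(\R^N\times\R)$ directly.

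The main obstacle is justifying that the spectral square root $\sqrt{\tfrac1V M}$ on $L^2_V$, defined via the eigenexpansion in \ref{as:MV}, has a form domain that coincides with the classical one: $H^1$-type functions in (b), and $\{f : Hf\in L^2\}$ in (a). One must also check that the identity $\norm{\sqrt{\tfrac1VM}f}_{L^2_V}^2=\int|\nabla f|^2+Vf^2$ holds there, not merely for smooth compactly supported $f$. I would handle this by noting that $\tfrac1V M$ is the Friedrichs extension associated with the closed form $q(f)=\int|\nabla f|^2+Vf^2$ on $L^2_V$. By Kato's representation theorem its form domain is the closure of $C_c^\infty$ under $q$, and the eigenfunctions in the examples lie in that domain. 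The estimate then follows from density, applied to truncated eigenexpansions of $u(\cdot,t)$ together with lower semicontinuity. A Fubini argument ensures that $u(\cdot,t)$ lies in the form domain for almost every $t$.
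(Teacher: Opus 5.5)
Your proposal is correct. Part (b) is the paper's argument (Lemma \ref{lem:regwl}, Kato's representation theorem for the form $\int|\nabla f|^2+Vf^2$). You are in fact more careful than the paper in explaining why the operator defined through the eigenexpansion in \ref{as:MV} is the form operator.

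Part (a) follows a different route. The paper obtains the exact identity $\|(-\partial_x^2+x^2)w\|^2+2\|w\|^2=\|\partial_x^2w\|^2+\|x^2w\|^2+2\|x\partial_xw\|^2$ by integrating by parts on finite Hermite sums, then passes to general $w$ by density (Lemma \ref{lem:regqho}). It handles $xw,\partial_xw$ separately in Lemma \ref{lem:betteremb}, via $\sum_n\sqrt{\nu_n}\|w_n\|_{L^2_d}^2=\int(|\partial_xw|^2+x^2|w|^2)\,d(t)\,\mathrm{d}(x,t)$, which needs only $w\in\Hs$. You instead express $x,\partial_x,x^2,x\partial_x,\partial_x^2$ through $a,a^*$ and bound everything on Hermite coefficients.

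The paper's identity is shorter and gives an exact formula. Your route makes the approximation step automatic: truncated Hermite expansions converge in the graph norm of $H=\sqrt{\tfrac1VM}$, which is the norm the density step actually requires. The paper instead cites Remark \ref{rem:4.2}, a density statement in $\Hs$, and that norm only controls $\ip{Hw}{w}$, not $\|Hw\|^2$.

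One step you leave implicit should be stated: the identification of limits. Your coefficient bounds show that the images of the partial sums under $x^2$, $x\partial_x$, $\partial_x^2$ are Cauchy in $L^2$. Since the partial sums converge to $u(\cdot,t)$ in $L^2$, these limits are $x^2u(\cdot,t)$, $x\partial_xu(\cdot,t)$, $\partial_x^2u(\cdot,t)$ in the distributional sense. Note also that your bounds $\|af\|^2,\|a^*f\|^2\lesssim\ip{Hf}{f}$ already reproduce Lemma \ref{lem:betteremb}, i.e.\ $xu,\partial_xu\in L^2$ follows from $u\in\Hs$ alone.
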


The paper is structured as follows: 
We use a variational method to show the existence of rogue waves solutions to \eqref{eq:main}.
In Section~\ref{sec:domainfunctional}, we develop the variational setting, i.e., we introduce an energy functional for \eqref{eq:main} on a suitable Hilbert space. 
Further we present embedding properties of the Hilbert space into weighted $L^q$-spaces, which are central for the variational argument and are proven in Appendix \ref{sec:approof}.
In Section~\ref{sec:proofmain}, we proceed with the variational method and show the existence of a nontrivial critical point to our energy functional using a saddle point technique, see Theorem \ref{thm:groundstate}. In particular, Theorem \ref{thm:main} then follows from the existence of a critical point and its regularity properties, see Lemma \ref{lem:reglin} and \ref{lem:nonlin}.
Moreover, in the case of our examples, we further analyze these regularity properties and prove Theorem \ref{them:reg}. Lastly, the proofs for the Examples of $M$, $V$ and $d$ are examined in Appendix \ref{sec:apex}. Throughout the paper we assume that \ref{as:first}--\ref{as:9} hold.

    \section{Variational setting and embedding results}\label{sec:domainfunctional}

Our goal is to find rogue wave solutions to \eqref{eq:main} as critical points of the corresponding energy functional.
According to \ref{as:first} we use the eigenfunction basis $(\varphi_{n,l})_{n \in \N, 1 \leq l \leq d_n}$ and expand $u$ by
\begin{align}\label{eq:solan}
    u(x,t)=\sum_{n \in \N} \sum_{l=1}^{d_n} u_{n,l}(t) \varphi_{n,l}(x)
\end{align}
with $u_{n,l}(t)=\ip{u(\cdot,t)}{\varphi_{n,l}(\cdot)}_{L^2_V(\R^N)} $. 
Applying formally the weighted wave operator $L$ to the ansatz \eqref{eq:solan} leads to a family of Sturm-Liouville operators 
\begin{align*}
    L_n=\frac{1}{d(t)}\dtsquare +\nu_n , \quad  n \in \N .
\end{align*}
For each $n \in \N$ we have a bilinear form $b_{L_n}$ corresponding to $L_n$ given by 
\begin{align*}
      b_{L_n}(v,v)\coloneqq \int_\R - |v'|^2 + \nu_n d(t) |v|^2 \dt, \quad v \in H^1(\R)
\end{align*}
with $\langle L_n v,v \rangle_{L^2_d(\R)}=b_{L_n}(v,v)$ for $v \in C_c^\infty(\R)$. 
Next, we construct a suitable domain to $b_{L_n}$. For that we use a spectral resolution of
\begin{align}\label{eq:specres}
    -\frac{1}{d(t)} \dtsquare : H^2(\R) \subset L^2_d(\R) \to L^2_d(\R), \quad -\frac{1}{d(t)} \dtsquare = \int_\R  \lambda \mathrm{d}P_\lambda 
\end{align}
and define  
\begin{align*}
    \sqrt{|L_n|}\coloneqq \int_\R \sqrt{|-\lambda+\nu_n|}\mathrm{d}P_\lambda 
\end{align*}
on $H^1(\R)$. 
As a consequence of \ref{as:last} we can define the norm
\begin{align*}
    \norm{v}_{\Hs_n}^2\coloneqq \norm{\sqrt{|L_n|}v}^2_{L_d^2(\R)}=b_{|L_n|}(v,v)
\end{align*}
on $H^1(\R)$. 
By definition of $b_{|L_n|}$ and Lemma \ref{lem:est} in Appendix \ref{sec:approof} the norm $\| \cdot \|_{\Hs_n}$ is equivalent to the standard norm $\| \cdot \|_{H^1(\R)}$ on $H^1(\R)$ for every fixed $n \in \N$.
Hence we denote the Hilbert space $\mathcal{H}_n\coloneqq(H^1(\R), \| \cdot \|_{\mathcal{H}_n})$.

We define the two orthogonal projections $P^\pm_n: \Hs_n \to \Hs_n$ by
\begin{align*}
    v^+ \coloneqq P^+_n[v] \coloneqq \int^{\nu_n}_{-\infty} 1 \mathrm{d}P_\lambda[v] , \quad  v^- \coloneqq P^-_n[v] \coloneqq \int_{\nu_n}^{\infty} 1 \mathrm{d}P_\lambda[v]
\end{align*}
and decompose $\Hs_n$ orthogonally such that $\Hs_n=\Hs_n^+ \oplus \Hs_n^-$, $v=v^++v^-$ with
\begin{align*}
    \norm{v}_{\Hs_n}^2=\norm{v^+}_{\Hs_n}^2+\norm{v^-}_{\Hs_n}^2 .
\end{align*}

For $v \in \Hs_n$ our bilinear form satisfies
\begin{align}\label{eq:bln}
    b_{L_n}(v,v)=\int_\R - |v'|^2 + \nu_n d(t) |v|^2 \dt = \norm{v^+}_{\Hs_n}^2-\norm{v^-}_{\Hs_n}^2 .
\end{align}
Lastly we introduce the form domain for $L$ via 
\begin{align*}
    \Hs \coloneqq \{ u \in L^2_{Vd}(\R^N \times \R): u(x,t)=\sum_{n \in \N} \sum_{l=1}^{d_n}u_{n,l}(t) \varphi_{n,l}(x), \norm{u}_{\Hs}< \infty \}
\end{align*}
with the norm given by
\begin{align*}
    \norm{u}_{\Hs}^2 \coloneqq \sum_{n \in \N} \sum_{l=1}^{d_n}\norm{u_{n,l}}_{\Hs_n}^2 = \sum_{n \in \N} \sum_{l=1}^{d_n} b_{|L_n|}(u_{n,l},u_{n,l}) .
\end{align*}
By Remark \ref{rem:specest} the space $\Hs$ embeds continuously into $L^2_{Vd}(\R^N \times \R)$. Further we have ${\Hs=\oplus_{n \in \N}\Hs_n^{d_n}}$ and the orthogonal decomposition of $\Hs_n=\Hs_n^+ \oplus \Hs_n^-$ extends to $\Hs=\Hs^+ \oplus \Hs^-$ where $\Hs^\pm=\oplus_{n \in \N}\Hs_n^{d_n,\pm}$.
The corresponding energy functional to \eqref{eq:main} is then given by 
\begin{align*}
    J(u)& \coloneqq \frac{1}{2}\sum_{n \in \N} \sum_{l=1}^{d_n} b_{L_n}(u_{n,l},u_{n,l}) - \frac{1}{p+1} \norm{u}_{L^{p+1}_{\tilde{V} \tilde{d}}(\R^N \times \R)}^{p+1} \\
    &= \frac{1}{2} \norm{u^+}_{\Hs}^2- \frac{1}{2}\norm{u^-}_{\Hs}^2 - \frac{1}{p+1} \norm{u}_{L^{p+1}_{\tilde{V} \tilde{d}}(\R^N \times \R)}^{p+1} 
\end{align*}
for $u \in \Hs$.
The following embedding result is central for our method and ensures, in particular, that $J$ is well-defined on $\Hs$.

\begin{theorem}\label{theom:emb}
    Let $\tau > 0$.
    Then $\Hs \hookrightarrow L^q_{Vd}(\R^N \times \R)$ is bounded and  $\Hs \hookrightarrow L^q_{Vd}(\R^N \times [-\tau,\tau])$ is compact for all $q \in \left[2, 2+\frac{1}{\alpha}\right)$.
\end{theorem}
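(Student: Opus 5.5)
The embedding is built from three ingredients: the spectral gap estimate \eqref{eq:specest}, which gives good $H^1$-type control of each mode in time; the sup-norm bound \ref{as:efestimateM} for the eigenfunctions; and the counting assumption \ref{as:evM}, which lets us sum over modes grouped in dyadic or unit blocks of $\sqrt{\nu_n}$. The plan is to interpolate between the $L^2$ bound and an $L^\infty$-type bound with a controlled loss in $\nu_n$.

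\textbf{Step 1: one-dimensional mode estimates.} For $v\in\Hs_n$, the spectral calculus and \eqref{eq:specest} give $\norm{v}_{L^2_d}^2\lesssim \nu_n^{-1/2}\norm{v}_{\Hs_n}^2$. For the derivative we use $\int|v'|^2=\int\lambda\,\mathrm d\langle P_\lambda v,v\rangle_{L^2_d}$. Where $\lambda\le 2\nu_n$ we have $\lambda\lesssim \nu_n\lesssim \sqrt{\nu_n}\,|\nu_n-\lambda|/c$, using $|\nu_n-\lambda|\ge c\sqrt{\nu_n}$. Where $\lambda>2\nu_n$ we have $\lambda\lesssim|\lambda-\nu_n|$. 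Hence $\norm{v'}_{L^2}^2\lesssim\sqrt{\nu_n}\norm{v}_{\Hs_n}^2$. This is essentially Lemma \ref{lem:est}, which I assume. Gagliardo--Nirenberg then yields
\[
\norm{v}_\infty^2\lesssim \norm{v}_{L^2}\norm{v'}_{L^2}\lesssim \norm{v}_{\Hs_n}^2 .
\]
This bound is uniform in $n$, and that uniformity is the key gain.

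\textbf{Step 2: the $q=2$ and $q=\infty$ endpoints.} The $L^2_{Vd}$ bound $\norm{u}_{L^2_{Vd}}^2\lesssim\sum_{n}\nu_n^{-1/2}\sum_l\norm{u_{n,l}}_{\Hs_n}^2$ is immediate from orthonormality. For a pointwise bound I split $u$ into blocks $u_k=\sum_{\sqrt{\nu_n}\in[k,k+1)}\sum_l u_{n,l}\varphi_{n,l}$; by \ref{as:evM} each block contains at most $K$ eigenvalues. Cauchy--Schwarz together with \ref{as:efestimateM} gives
\[
\abs{u_k(x,t)}^2\le \Bigl(\sum_{n,l}\norm{\varphi_{n,l}}_\infty^2\Bigr)\sum_{n,l}\abs{u_{n,l}(t)}^2\lesssim k^{2\alpha}\sum_{n,l}\norm{u_{n,l}}_{\Hs_n}^2 ,
\]
where the sums run over the modes in block $k$. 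Since $\sqrt{\nu_n}\sim k$ on block $k$, the $L^2$ estimate reads $\norm{u_k}_{L^2_{Vd}}^2\lesssim k^{-1}\norm{u_k}_\Hs^2$.

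\textbf{Step 3: interpolation and summation (main obstacle).} Interpolating the two block bounds gives
\[
\norm{u_k}_{L^q_{Vd}}\lesssim k^{-\frac1q+\alpha(1-\frac2q)}\norm{u_k}_\Hs .
\]
The exponent is negative exactly when $q<2+\frac1\alpha$. Summing over blocks by the triangle inequality and Cauchy--Schwarz produces $\bigl(\sum_k k^{2(-\frac1q+\alpha(1-\frac2q))}\bigr)^{1/2}$, which requires the exponent to be below $-\tfrac12$. That covers only a smaller range of $q$, so this naive summation loses. To reach the full range I would replace the triangle inequality with a Littlewood--Paley-type square-function argument in the block index, or with a real interpolation argument that exploits the orthogonality of the blocks in $L^2$. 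Alternatively, I would control $\norm{u}_{L^q}$ by a Bernstein-type estimate for the dyadic pieces $\sqrt{\nu_n}\sim 2^j$ and then interpolate with the $\ell^2$-summable weights. Choosing dyadic rather than unit blocks costs a factor $2^{j}$ in the eigenvalue count but turns the summation geometric. The dyadic bookkeeping, with the $L^\infty$ constant $2^{j(2\alpha+1)}$ matched against the $L^2$ gain $2^{-j}$, is where I expect the main technical difficulty, since the endpoint exponent must come out as exactly $2+\frac1\alpha$.

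\textbf{Step 4: compactness on $\R^N\times[-\tau,\tau]$.} The bound from Step 3 shows that the tail $\sum_{n>n_0}$ is small in $L^q_{Vd}$ uniformly on bounded sets of $\Hs$, because the block exponents are negative. A bounded sequence in $\Hs$ therefore reduces, up to an $\epsilon$-error, to finitely many modes. For each such mode, $u_{n,l}$ is bounded in $H^1(\R)$, hence precompact in $C([-\tau,\tau])$ by Rellich's theorem. Since $\varphi_{n,l}\in L^2_V\cap L^\infty\subset L^q_V$ for $q\ge 2$, the finitely many products $u_{n,l}\varphi_{n,l}$ converge along a subsequence in $L^q_{Vd}(\R^N\times[-\tau,\tau])$. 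A diagonal argument then gives compactness of the embedding.
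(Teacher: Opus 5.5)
There is a genuine gap in Step 3, and it is structural rather than a matter of bookkeeping. With the per-mode input of Step 1, no summation scheme can reach $2+\frac1\alpha$.

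Step 1 only uses $\norm{v}_{L^2_d(\R)}^2\lesssim\nu_n^{-1/2}\norm{v}_{\Hs_n}^2$ and $\norm{v'}_{L^2(\R)}^2\lesssim\nu_n^{1/2}\norm{v}_{\Hs_n}^2$. So Steps 1--2 hold verbatim with $\norm{\cdot}_{\Hs}$ replaced by the weaker norm of the intermediate space $H$ of Appendix~\ref{sec:approof}. Any device built on your block bounds (triangle inequality, square functions, real interpolation using $L^2$-orthogonality of the blocks) would therefore prove $H\hookrightarrow L^q_{Vd}$. That embedding cannot hold in general for $q>\frac{4(1+\alpha)}{2\alpha+1}=2+\frac{2}{2\alpha+1}$, which is strictly below $2+\frac1\alpha$ for every $\alpha>0$. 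For $\alpha=0$ it equals $4$, whereas the theorem claims every $q<\infty$.

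Here is a heuristic example. In the radial setting of Remark~\ref{rem:exsym} (where $\alpha=\frac{N-1}{2}$), take $u=\sum_{k\le n\le 2k}\varphi_n(x)\psi(\sqrt{\nu_n}\,t)$ with $\psi\in C_c^\infty(\R)$, $\psi(0)=1$, and the $\varphi_n$ normalized so that $\varphi_n(0)>0$. Then $\norm{u}_H^2\sim k$. On $\{\abs{x}\lesssim k^{-1},\ \abs{t}\lesssim k^{-1}\}$, a set of measure $\sim k^{-2\alpha-2}$, the terms add up coherently to $\abs{u}\gtrsim k^{1+\alpha}$. Hence $\norm{u}_{L^q_{Vd}}/\norm{u}_H\gtrsim k^{\frac12+\alpha-\frac{2\alpha+2}{q}}$.

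Your own dyadic computation hits the same ceiling. The bounds $\norm{u_j}_\infty^2\lesssim 2^{j(2\alpha+1)}a_j^2$ and $\norm{u_j}_{L^2_{Vd}}^2\lesssim 2^{-j}a_j^2$ interpolate to $\norm{u_j}_{L^q_{Vd}}\lesssim 2^{j(-1+(\alpha+\frac12)(q-2))/q}a_j$, which decays only for $q<\frac{4(1+\alpha)}{2\alpha+1}$. The unit-block summation does not give a smaller range either; it gives nothing for $q>2$, since asking the block exponent to be below $-\frac12$ is equivalent to $(q-2)(2\alpha+1)<0$.

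This ceiling is exactly the range of the paper's own chain $\Hs\hookrightarrow H\hookrightarrow X_r\hookrightarrow L^r_V$ in Lemma~\ref{lem:tripemb}. For the full range $[2,2+\frac1\alpha)$ the paper does not argue in-house; it invokes Theorem~3.27 of \cite{hor}, with \ref{as:efestimateM} adapted to the multiplicities $d_n$.

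What is missing is a per-mode estimate that uses the whole weight $\abs{\nu_n-\lambda}$ in $\norm{\cdot}_{\Hs_n}$, not only its lower bound $c\sqrt{\nu_n}$ from \eqref{eq:specest}. That weight is of size $\sqrt{\nu_n}$ only when $\sqrt\lambda$ is within $O(1)$ of $\sqrt{\nu_n}$. Away from that region it grows like $\abs{\sqrt\lambda-\sqrt{\nu_n}}(\sqrt\lambda+\sqrt{\nu_n})$. So a unit vector of $\Hs_n$ behaves like a bounded-bandwidth oscillation at frequency $\sqrt{\nu_n}$, not like the bump of width $\nu_n^{-1/2}$ that Gagliardo--Nirenberg must allow.

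Heuristically, in a Fourier or Bloch picture, this gives $\norm{v}_\infty^2\lesssim\nu_n^{-1/2}\log\nu_n\,\norm{v}_{\Hs_n}^2$, a gain of $\nu_n^{-1/2}$ over Step 1. Making this rigorous requires uniform control of the generalized eigenfunctions of $-\frac{1}{d(t)}\frac{\mathrm{d}^2}{\mathrm{d}t^2}$. With that gain, your dyadic interpolation produces $\norm{u_j}_{L^q_{Vd}}\lesssim j^{\frac{q-2}{2q}}\,2^{j(\alpha(q-2)-1)/q}a_j$, which is geometrically summable exactly for $q<2+\frac1\alpha$.

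Your Step 4 is the paper's compactness argument (finitely many modes plus Rellich on $[-\tau,\tau]$), with one difference. The paper controls the tail in $L^2_{Vd}$ via $\norm{(I^K-I)u}_{L^2_{Vd}}^2\lesssim\nu_{K+1}^{-1/2}\norm{u}_{\Hs}^2$, and only then reaches $L^q$ by Hölder against the global bound at a larger exponent. As written, your Step 4 relies on the unproven Step 3. Once the global bound is available, the paper's route closes it.
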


By Theorem \ref{theom:emb} the embedding is only locally compact with respect to time. In order to overcome this lack of global compactness we will later use the following variant of P. L. Lion's concentration compactness result, see \cite{Willem}, in the case when $d$ is periodic.

\begin{theorem}\label{lem:cct} Let $T>0$ and $q \in \left[2,  2+\frac{1}{\alpha}\right)$ be given and let $(u_j)_{j \in \N} \subset \Hs$ with $\|u_j\|_\Hs \leq c $ for all $j \in \N$. Further assume
\begin{align*}
\lim_{j \to \infty}\sup_{m \in \Z} \int_{\R^N \times [mT,(m+1)T]}  |u_j(x,t)|^q V(x) \dxt =0 .
\end{align*}
Then, $u_j \to 0$ in $\Lqtvd(\R^N \times \R)$ as $j \to \infty$ for all $\tilde{q} \in \left(2,  2+\frac{1}{\alpha}\right)$.
\end{theorem}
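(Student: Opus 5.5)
The plan is to follow the classical Lions argument as in Willem's book. The only ingredient needed is a local embedding estimate on a single time slab, with a power of the $\Hs$-norm that can be summed over slabs. First, fix $\tilde q \in (2, 2+\frac1\alpha)$. It suffices to handle $\tilde q$ close to $q$. Indeed, once $u_j \to 0$ in $L^{\tilde q}_{Vd}$ for one $\tilde q \in (2,2+\frac1\alpha)$, the uniform bound $\|u_j\|_\Hs\le c$ and Theorem \ref{theom:emb} give uniform bounds in $L^{s}_{Vd}$ for every $s\in[2,2+\frac1\alpha)$. Hölder interpolation between $L^{\tilde q}$ and $L^2$, or between $L^{\tilde q}$ and some $L^{s}$ with $s$ close to $2+\frac1\alpha$, then yields convergence for every exponent in the open interval. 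Since $d$ is bounded above and below, the weights $V$ and $Vd$ are interchangeable up to constants, so the hypothesis controls $\int_{\R^N\times[mT,(m+1)T]}|u_j|^qV\,d\,\dxt$.

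The key step is a localized embedding on slabs $Q_m:=\R^N\times[mT,(m+1)T]$:
\begin{align*}
\|u\|_{L^{s}_{Vd}(Q_m)} \lesssim \|u\|_{\Hs(m)},
\end{align*}
with a constant independent of $m$, where $\|\cdot\|_{\Hs(m)}$ is a localized quantity satisfying $\sum_m \|u\|_{\Hs(m)}^2 \lesssim \|u\|_\Hs^2$. To get this I would use a smooth partition of unity $\chi_m(t)=\chi(t-mT)$ with $\sum_m\chi_m^2\lesssim 1$ and $\operatorname{supp}\chi_m\subset[(m-1)T,(m+2)T]$. I would then show that multiplication by $\chi_m$ is bounded on $\Hs$ uniformly in $m$, with $\sum_m\|\chi_m u\|_\Hs^2\lesssim\|u\|_\Hs^2$, and apply Theorem \ref{theom:emb} to $\chi_m u$. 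This localization is the main obstacle, because the norm $\|\cdot\|_{\Hs_n}$ is defined spectrally through $|L_n|$ and not through a local integrand. I would first use Lemma \ref{lem:est} to compare $\|v\|_{\Hs_n}^2$ with $\|v'\|_{L^2}^2+\nu_n\|v\|_{L^2_d}^2$. Remark \ref{rem:specest} and the spectral theorem should give
\begin{align*}
\|v\|_{\Hs_n}^2\gtrsim \|v'\|^2_{L^2}+\nu_n\|v\|^2_{L^2_d}\Big/\sqrt{\nu_n}\cdot\sqrt{\nu_n}
\end{align*}
up to $n$-dependent weights, and I expect the comparison to be uniform only after accepting a loss of a power of $\nu_n$. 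If the loss is not uniform, I would instead avoid localizing the norm. For $s>q$, I would interpolate on each slab, $\|u\|_{L^{\tilde q}(Q_m)}^{\tilde q}\le \|u\|_{L^q(Q_m)}^{(1-\theta)\tilde q}\|u\|_{L^{s}(Q_m)}^{\theta\tilde q}$, choosing $\tilde q$ so that $\theta\tilde q=2$.

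Granting the slab estimate with $\theta\tilde q = 2$, the conclusion is standard. Summing over $m$ gives
\begin{align*}
\int_{\R^N\times\R}|u_j|^{\tilde q}Vd\dxt \le \Big(\sup_m\|u_j\|_{L^q_{Vd}(Q_m)}\Big)^{(1-\theta)\tilde q}\sum_m\|u_j\|_{\Hs(m)}^2\lesssim c^2\Big(\sup_m\|u_j\|_{L^q_{Vd}(Q_m)}\Big)^{(1-\theta)\tilde q},
\end{align*}
which tends to $0$ by hypothesis. This proves the claim for the particular $\tilde q$ with $\theta\tilde q=2$. The interpolation remark from the first step then extends it to all $\tilde q\in(2,2+\frac1\alpha)$. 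The condition $\theta\tilde q=2$ is solvable with $s<2+\frac1\alpha$ whenever $\tilde q>2$: take $s$ near $2+\frac1\alpha$ and $q$ near $2$, or use $q$ itself after first passing down to exponent $2$ via Hölder on the slab.
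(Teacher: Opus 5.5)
\textbf{There is a genuine gap: the slab estimate that carries the whole proof is assumed, not proved.}

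Your outer argument is the same as the paper's. You apply Hölder on each slab $Q_m$ with $\theta\tilde q=2$, pull out $\sup_m$, and extend to all exponents by interpolating with the uniform bounds of Theorem \ref{theom:emb}. The problem is the step you only ``grant'', namely $\sum_{m\in\Z}\|u\|_{L^s_{Vd}(Q_m)}^2\lesssim\|u\|_\Hs^2$.

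The route you sketch for it fails as written. Lemma \ref{lem:est} and the trivial spectral upper bound give only
\[
\nu_n^{-1/2}\bigl(\|v'\|_{L^2}^2+\nu_n\|v\|_{L^2_d}^2\bigr)\lesssim\|v\|_{\Hs_n}^2\le\|v'\|_{L^2}^2+\nu_n\|v\|_{L^2_d}^2 .
\]
So the quantity $\|v'\|^2+\nu_n\|v\|^2$, which behaves well under cut-offs, is \emph{not} controlled by $\|v\|_{\Hs_n}^2$ uniformly in $n$. Your displayed inequality points the wrong way, and the loss of $\sqrt{\nu_n}$ ruins the sum over $n$. Your fallback (``avoid localizing the norm, interpolate on slabs'') still needs exactly the same square-summable slab bound, so it is circular.

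\textbf{The paper's missing idea: never localize $\Hs$.} It turns the lower bound of Lemma \ref{lem:est} into a norm,
\[
\|u\|_H^2=\sum_{n,l}\sqrt{\nu_n}\|u_{n,l}\|_{L^2_d}^2+\nu_n^{-1/2}\|u_{n,l}'\|_{L^2}^2 ,
\]
which is an integral of a density in $t$. It then proves that this weaker space $H$ itself embeds into $L^r_{Vd}$ (Lemma \ref{lem:tripemb}). That proof uses Gagliardo--Nirenberg mode by mode, Hölder over $(n,l)$ with \ref{as:evM} and \ref{as:efestimateM}, and Riesz--Thorin on the spaces $X_r$. For $H$ the cut-off bound $\sum_m\|\phi_m u\|_H^2\lesssim\|u\|_H^2\lesssim\|u\|_\Hs^2$ is elementary. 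The price is the smaller range $r<\frac{4(1+\alpha)}{2\alpha+1}$. This is why the paper first proves the claim for $q=2$ and $\tilde q<\frac{2\alpha+3}{1+\alpha}$, and only then extends it.

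\textbf{Alternative repair of your own route.} The localization can also be done by interpolation rather than via Lemma \ref{lem:est}. Let $\delta$ be the infimum in \ref{as:last}.
\begin{enumerate}
\item On the spectrum, $(\lambda-\nu_n)^2\ge\delta^2(\lambda+\nu_n)$, hence $\|v'\|_{L^2}+\|v\|_{L^2_d}\lesssim\|L_nv\|_{L^2_d}$ uniformly in $n$.
\item Since $L_n(\chi v)=\chi L_nv+\frac1d(2\chi'v'+\chi''v)$, the map $v\mapsto(\chi_m v)_{m\in\Z}$, with $\chi_m$ translates of a fixed smooth bump, is bounded $L^2_d\to\ell^2(L^2_d)$. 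It is also bounded $D(L_n)\to\ell^2(D(L_n))$ for the norm $\|L_n\cdot\|_{L^2_d}$, uniformly in $n$.
\item Interpolating at the midpoint gives boundedness $\Hs_n=D(|L_n|^{1/2})\to\ell^2(\Hs_n)$, still uniformly in $n$.
\item Applying Theorem \ref{theom:emb} to $\chi_m u$ then yields the slab bound, even in the full range $s<2+\frac1\alpha$.
\end{enumerate}

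\textbf{Minor point.} You cannot lower the exponent from $L^q(Q_m)$ to $L^2(Q_m)$ by Hölder, because slabs generally have infinite $V$-measure. You do not need this step: take $s\in(q,2+\frac1\alpha)$ directly.
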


The proofs of Theorem \ref{theom:emb} and \ref{lem:cct} are presented in Appendix \ref{sec:approof}.

   \section{Proof of main result}\label{sec:proofmain}
\subsection{Existence of a critical point}
In order to find solutions to \eqref{eq:main} we first show the existence of critical points of $J$.
Since $J$ is unbounded from above and below we find them as saddle points.
A suitable approach for our setting is the method of the generalized Nehari manifold, see Section 4 in \cite{szulkin_weth} by Szulkin and Weth.

The idea is to minimize $J$ on the set 
\begin{align*}
    \mathcal{M}\coloneqq \set{u \in \Hs \setminus \Hs^- : J'(u)[w]=0 \text{ for all } w \in \R u+\Hs^-}
\end{align*}
which is called the generalized Nehari manifold.
Note that $\calM$ is a natural constraint for $J$ and does not generate a Lagrange multiplier. 
In order to apply the results from \cite{szulkin_weth} we first observe that our functional $J$ can be written as $J=J_0-J_1$ where 
\begin{align}\label{eq:J=J0-J1}
    J_0(u)= \frac{1}{2} \norm{u^+}_{\Hs^+}^2 - \frac{1}{2} \norm{u^-}_{\Hs^-}^2  , \quad J_1(u)=  \frac{1}{p+1} \int_{\R^N \times \R } |u(x,t)|^{p+1} \tilde{V}(x) \tilde{d}(t) \dxt .
\end{align}
Further, $J$ is a $C^1$-functional on $\Hs$ with 
\begin{align*}
    J'(u)[v]=\langle u^+, v^+ \rangle_{\Hs^+} - \langle u^-, v^- \rangle_{\Hs^-} - \int_{\R^N \times \R} |u(x,t)|^{p-1} u(x,t) v(x,t) \tilde{V}(x) \tilde{d}(t) \dxt
\end{align*}
for all $u,v \in \Hs$.
The following lemma guarantees that the conditions $(B_1)$, $(B_2)$, $(i)$, $(ii)$ and in the case of \ref{as:9} \ref{as:a} also assumption $(iii)$ from Theorem 35 in \cite{szulkin_weth} are satisfied.
\begin{lemma}\label{lem:vorfunkt}
\begin{enumerate}
\item$J_1$ is weakly lower semicontinuous, 
\begin{align*} 
J_1(0)=0 \quad \text{ and } \quad \frac{1}{2}J_1'(u)[u]> J_1(u)>0 \text{ for } u\neq 0.
\end{align*} \label{lem:it:wlsc}
\item $\lim_{u\to 0} \frac{J_1'(u)}{\|u\|_{\mathcal{H}}}=0$ and $\lim_{u\to 0} \frac{J_1(u)}{\|u\|^2_{\mathcal{H}}}=0$. \label{lem:it:limzero}
\item For a weakly compact set $U\subset \mathcal{H}\setminus\{0\}$ we have $\lim_{s\to\infty} \frac{J_1(s u)}{s^2} = \infty$ uniformly w.r.t. $u\in U$. \label{lem:it:wcss}
\item In case \ref{as:a} of assumption \ref{as:9} the map $u \mapsto J_1'(u)$ is completely continuous from $\mathcal{H}$ to $\mathcal{H}'$. \label{lem:it:complcont}
\item For each $w\in\mathcal{H}\setminus \mathcal{H}^-$ let $\mathcal{H}(w)=\R_{\geq 0}w+ \mathcal{H}^-$. Then there exists a unique nontrivial critical point $m(w)$ of $J|_{\mathcal{H}(w)}$. Moreover, $m(w)\in \mathcal{M}$ is the unique global maximizer of $J|_{\mathcal{H}(w)}$ as well as $J(m(w))>0$. \label{lem:it:uniquecritpoint}
\item There exists $\delta>0$ such that $\| m(w)^+\|_{\mathcal{H}} \geq \delta$ for all $w\in \mathcal{H}\setminus \mathcal{H}^-$. \label{lem:it:mbbdfromblw}
    \end{enumerate}
\end{lemma}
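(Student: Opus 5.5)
The plan is to verify the six items in order. Items (a)–(d) are direct consequences of the embedding Theorem \ref{theom:emb} with $q=p+1$; this is admissible because $p<1+\frac{1}{2\alpha}$ gives $p+1\in(2,2+\frac1\alpha)$. Items (e)–(f) then follow from the abstract results in Section 4 of \cite{szulkin_weth} once (a)–(c) are known.

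For (a): $J_1$ is convex and continuous on $\Hs$, because the $L^{p+1}_{\tilde V\tilde d}$-norm is continuous along the bounded embedding and $\tilde V\lesssim V$, $\tilde d\in L^\infty$. A convex continuous functional is weakly lower semicontinuous. Since $J_1'(u)[u]=(p+1)J_1(u)$, we get $\frac12J_1'(u)[u]=\frac{p+1}{2}J_1(u)>J_1(u)$ whenever $J_1(u)>0$. Positivity for $u\ne0$ holds because $\tilde V\gtrsim V>0$ on every ball and $\tilde d>0$ a.e., while $u\ne0$ in $\Hs\subset L^2_{Vd}$.

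For (b): Hölder's inequality together with the embedding gives $|J_1'(u)[v]|\lesssim\|u\|_{L^{p+1}_{Vd}}^p\|v\|_{L^{p+1}_{Vd}}\lesssim\|u\|_\Hs^p\|v\|_\Hs$. Since $p>1$, this yields $\|J_1'(u)\|/\|u\|_\Hs\to0$, and similarly $J_1(u)\lesssim\|u\|^{p+1}_\Hs$.

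For (c): we have $J_1(su)/s^2=s^{p-1}J_1(u)$. It suffices to show $\inf_{u\in U}J_1(u)>0$. Take a minimizing sequence in $U$. By weak compactness it has a weakly convergent subsequence with limit $u\in U$, so $u\ne0$. Weak lower semicontinuity gives $\liminf J_1(u_j)\ge J_1(u)>0$.

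For (d), in case \ref{as:a}: let $u_j\rightharpoonup u$ in $\Hs$. Split $\R^N\times\R$ into $\R^N\times[-\tau,\tau]$ and its complement. On the complement, $\sup_{|t|>\tau}\tilde d(t)\to0$ as $\tau\to\infty$. Combined with uniform $L^{p+1}_{Vd}$-bounds (using $\operatorname{ess\,inf} d>0$ to compare weights), this makes that part of $\|J_1'(u_j)-J_1'(u)\|_{\Hs'}$ small uniformly in $j$. On $\R^N\times[-\tau,\tau]$, the compact embedding gives $u_j\to u$ in $L^{p+1}_{Vd}$. Continuity of the Nemytskii map $u\mapsto|u|^{p-1}u$ from $L^{p+1}$ to $L^{(p+1)/p}$, plus Hölder's inequality, gives convergence of this local part.

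For (e) and (f): conditions (a)–(c) are exactly hypotheses $(B_1)$, $(B_2)$, (i), (ii) of Theorem 35 in \cite{szulkin_weth}, and Proposition 39 and Lemma 38 there give (e) and (f). I would nevertheless sketch them directly.

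For (e): $J|_{\Hs(w)}$ is strictly concave, since $-\frac12\|u^-\|^2$ and $-J_1$ are concave, and $J_1$ is strictly convex as $t\mapsto|t|^{p+1}$ is and $\tilde V\tilde d>0$ a.e. By (b), $J>0$ for small $sw^+$ because $w^+\ne0$. By (c), applied to the weakly compact set of normalized elements (whose weak limits have nonzero $+$-part), $J\to-\infty$ at infinity in $\Hs(w)$. Weak upper semicontinuity then gives a maximizer, which is unique by strict concavity and is the unique nontrivial critical point. It lies in $\calM$, and $J(m(w))>0$.

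For (f): by (b) choose $\rho>0$ with $J\ge\frac14\rho^2$ on $\{\|u^+\|=\rho,u^-=0\}$. Then $J(m(w))\ge\frac14\rho^2$, because $m(w)$ maximizes over $\Hs(w)\ni\rho w^+/\|w^+\|$. Since $J(u)\le\frac12\|u^+\|^2$, it follows that $\|m(w)^+\|\ge\rho/\sqrt2=:\delta$.

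I expect the main obstacle to be the strict concavity and coercivity of $J$ on $\Hs(w)$ in (e). Coercivity requires a careful normalization argument: one has to rule out weak limits with vanishing $+$-part, using $-\frac12\|u^-\|^2$ together with $J_1\ge0$. In practice I would defer this to \cite{szulkin_weth}.
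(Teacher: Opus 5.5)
Your arguments for (a)--(d) and (f) are fine and are essentially the ones the paper imports from \cite{maier_reichel_schneider} and \cite{hor}. The gap is in (e), and it has two parts.

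First, (e) does not follow from (a)--(c) by the abstract theory. In Theorem 35 of \cite{szulkin_weth}, $(B_2)$ is a separate hypothesis and is exactly statement (e); your (a), (b), (c) supply only $(B_1)$, (i), (ii). The paper obtains (e) by redoing the proof of Proposition 39 there, which uses the concrete power nonlinearity.

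Second, your direct sketch relies on strict concavity of $J|_{\Hs(w)}$, and this is false. For $u=sw+v\in\Hs(w)$ one has $u^+=sw^+$, so $J$ contains the term $\frac{s^2}{2}\|w^+\|_{\Hs}^2$, which is strictly convex in $s$. For instance, for $w\in\Hs^+$ the map $s\mapsto J(sw)=\frac{s^2}{2}\|w\|_{\Hs}^2-\frac{s^{p+1}}{p+1}\int|w|^{p+1}\tilde V\tilde d\dxt$ has positive second derivative near $s=0$. So neither uniqueness of the maximizer nor the claim that every nontrivial critical point is that maximizer follows from what you wrote. Only the existence of a maximizer (weak upper semicontinuity plus your coercivity argument) survives.

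The missing ingredient is the strict inequality $J(su+v)<J(u)$ for $u\in\calM$, $s\ge0$, $v\in\Hs^-$, $su+v\neq u$. Since $\frac{s^2-1}{2}u+sv\in\R u+\Hs^-$, testing $J'(u)=0$ in this direction gives
\[
J(su+v)-J(u)=-\tfrac12\|v\|_{\Hs}^2+\int_{\R^N\times\R}\Big[|u|^{p-1}u\Big(\tfrac{s^2-1}{2}u+sv\Big)+\tfrac{|u|^{p+1}}{p+1}-\tfrac{|su+v|^{p+1}}{p+1}\Big]\tilde V\tilde d\dxt .
\]
Pointwise the integrand is concave in $v$. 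Maximizing over $v$ bounds it by $|u|^{p+1}h(s)$ with $h(s)=-\frac{s^2+1}{2}+\frac{1}{p+1}+\frac{p}{p+1}s^{(p+1)/p}$. Because $p>1$, one has $h\le h(1)=0$ and $h(s)<0$ for $s\neq1$. Strictness then follows: if $s\neq1$, use $h(s)<0$ together with $u\neq0$ and $\tilde V\tilde d>0$ a.e.; if $s=1$, then $v\neq0$ and $-\frac12\|v\|_{\Hs}^2<0$.

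A nontrivial critical point of $J|_{\Hs(w)}$ cannot lie in $\Hs^-$, since $J'(v)[v]<0$ there. Hence it belongs to $\calM$ with $\Hs(u)=\Hs(w)$, and by the inequality it is the strict global maximizer of $J|_{\Hs(w)}$. In particular there is at most one, and combined with your existence argument this proves (e). Your proof of (f) then goes through as written.
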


\begin{proof}
    The proofs are essentially the same as in \cite{maier_reichel_schneider} and \cite{hor}. More precisely, by using \ref{as:dgen} and \ref{as:vgen} we can directly follow the lines of the proof of Lemma 5.1 in \cite{maier_reichel_schneider} to show  \ref{lem:it:wlsc}--\ref{lem:it:wcss}.
    For \ref{lem:it:complcont} we use that $\tilde{d}$ is localized and that the embedding $\Hs \hookrightarrow L^{p+1}_{Vd}(\R^N \times [-\tau, \tau])$ is compact by Theorem~\ref{theom:emb} for $\tau >0$, see Lemma 2.3 in \cite{hor}.
    For \ref{lem:it:uniquecritpoint} we can follow the proof of Proposition 39 in \cite{szulkin_weth}. Lastly the proof of \ref{lem:it:mbbdfromblw} is the same as the proof of Lemma 5.2 $b)$ in \cite{maier_reichel_schneider}.
\end{proof}
To obtain a candidate for a critical point, the following lemma is very useful.
\begin{lemma}\label{lem:5.3}\label{lem:boundedness_of_PS}
\begin{enumerate}
    \item   Any Palais-Smale sequence $(u_j)_{j \in \N} \subset \Hs$ for $J$ is bounded. \label{lem:it:psbdd}
    \item   There exist constants $C, \eps>0$ such that 
\begin{align}\label{eq:loc:manifold_est}
	\eps\leq \|u\|_\mathcal{H} \leq C J(u)^\frac{p}{p+1} \mbox{ for all } u\in \mathcal{M}.
\end{align}
\end{enumerate}
  \end{lemma}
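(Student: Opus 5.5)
For part (a), let $(u_j)$ be a Palais--Smale sequence, so that $J(u_j)$ stays bounded and $J'(u_j)\to 0$ in $\Hs'$. The standard Ambrosetti--Rabinowitz identity gives
\begin{align*}
J(u_j)-\tfrac12 J'(u_j)[u_j] = \left(\tfrac12-\tfrac1{p+1}\right)\norm{u_j}_{L^{p+1}_{\tilde V\tilde d}}^{p+1}.
\end{align*}
Hence $\norm{u_j}_{L^{p+1}_{\tilde V\tilde d}}^{p+1}\lesssim 1+o(1)\norm{u_j}_\Hs$.

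Next we test with $u_j^+-u_j^-$, which gives
\begin{align*}
\norm{u_j}_\Hs^2 = J'(u_j)[u_j^+-u_j^-] + \int |u_j|^{p-1}u_j(u_j^+-u_j^-)\tilde V\tilde d \dxt .
\end{align*}
By H\"older's inequality with weight $\tilde V\tilde d$, the integral is bounded by $\norm{u_j}_{L^{p+1}_{\tilde V\tilde d}}^{p}\,\norm{u_j^+-u_j^-}_{L^{p+1}_{\tilde V\tilde d}}$.

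To control the last factor we use Theorem \ref{theom:emb} with $q=p+1$. This is admissible because $p<1+\frac1{2\alpha}$ gives $p+1<2+\frac1{\alpha}$. Combined with $\tilde V\lesssim V$ and $\tilde d\in L^\infty$ and the bound on $d$ from below from \ref{as:dgen}, the last factor is $\lesssim\norm{u_j}_\Hs$, since $\norm{u^+-u^-}_\Hs=\norm{u}_\Hs$.

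Writing $a_j=\norm{u_j}_\Hs$, we obtain
\begin{align*}
a_j^2\lesssim o(1)a_j + (1+o(1)a_j)^{\frac{p}{p+1}}a_j .
\end{align*}
Since $\frac{p}{p+1}<1$, the right-hand side grows like $a_j^{1+\frac{p}{p+1}}$, which is subquadratic, so $a_j$ stays bounded.

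For part (b), take $u\in\calM$. Then $J'(u)[u]=0$, and the same identity becomes
\begin{align*}
J(u)=\left(\tfrac12-\tfrac1{p+1}\right)\norm{u}_{L^{p+1}_{\tilde V\tilde d}}^{p+1}.
\end{align*}
Since $u^-\in\Hs^-\subset \R u+\Hs^-$, we also have $J'(u)[u^-]=0$. Together with $J'(u)[u]=0$ this gives $J'(u)[u^+-u^-]=0$. The computation from part (a), now without error terms, yields
\begin{align*}
\norm{u}_\Hs^2\lesssim \norm{u}_{L^{p+1}_{\tilde V\tilde d}}^p\norm{u}_\Hs \lesssim J(u)^{\frac p{p+1}}\norm u_\Hs .
\end{align*}
This is the upper bound in \eqref{eq:loc:manifold_est}. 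We divide by $\norm{u}_\Hs$, which is allowed because $u\notin\Hs^-$ and so $u\ne0$.

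The lower bound follows from Lemma \ref{lem:vorfunkt}\ref{lem:it:mbbdfromblw}. For $u\in\calM$ we have $m(u)=u$, by the uniqueness in \ref{lem:it:uniquecritpoint}. Then $\norm u_\Hs\ge\norm{u^+}_\Hs\ge\delta$, and we take $\eps=\delta$.

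The only delicate point is the H\"older/embedding step. It relies on Theorem \ref{theom:emb} for the exponent $p+1$ and on comparing the weights $\tilde V\tilde d$ with $Vd$; this comparison is possible because $\tilde d$ is bounded and $d$ is bounded from below.
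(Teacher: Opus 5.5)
Your proof is correct. Part (a) and the upper bound in (b) are essentially the paper's argument. There are two cosmetic differences. First, the paper tests $J'(u)$ separately with $u^+$ and $u^-$ to bound $\norm{u^\pm}_\Hs^2$, whereas you test once with $u^+-u^-$ and get $\norm{u}_\Hs^2$ directly. Second, you close (a) by comparing $a_j^2$ with the subquadratic growth $a_j^{(2p+1)/(p+1)}$. The paper instead bootstraps in two steps: first $\int|u_j|^{p+1}\tilde V\tilde d\lesssim J(u_j)+1$, then $\norm{u_j}_\Hs^{(p+1)/p}\lesssim J(u_j)+1$, which also gives an explicit bound in terms of $J(u_j)$.

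The genuine difference is the lower bound $\eps$. The paper stays inside the lemma: from \eqref{eq:uinM} and Theorem \ref{theom:emb} it gets $J(u)\lesssim\norm{u}_\Hs^{p+1}$ on $\calM$. Combined with the upper bound this gives $\norm{u}_\Hs\lesssim\norm{u}_\Hs^{p}$, hence $\norm{u}_\Hs\geq\eps$ because $p>1$ and $u\neq0$. You instead note that $u\in\calM$ is a nontrivial critical point of $J|_{\Hs(u)}$, so $m(u)=u$ by the uniqueness in Lemma \ref{lem:vorfunkt}\ref{lem:it:uniquecritpoint}, and then apply Lemma \ref{lem:vorfunkt}\ref{lem:it:mbbdfromblw}.

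This is legitimate: Lemma \ref{lem:vorfunkt} is established before, and independently of, the present lemma, so there is no circularity. It also gives the slightly stronger statement $\norm{u^+}_\Hs\geq\delta$. The trade-off is that it relies on the imported statements (e) and (f), whereas the paper's argument needs only the embedding and $p>1$.
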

\begin{proof}
 We need the following identities. First we have
\begin{align}\label{eq:ableitungj}
 J(u)-\tfrac12 J'(u)[u] = \frac{p-1}{2(p+1)} \int_{\R^N\times\R} |u|^{p+1} \tilde{V}(x) \tilde{d}(t) \der (x,t) \text{ for } u \in \Hs
\end{align}
 and in particular 
 \begin{align} \label{eq:uinM}
 J(u) = \frac{p-1}{2(p+1)} \int_{\R^N\times\R} |u|^{p+1} \tilde{V}(x) \tilde{d}(t) \der (x,t) \text{ for } u \in \calM .
\end{align}
Further for $u=u^+ + u^- \in \Hs$ we have
\begin{align}\label{eq:uplus}
J'(u)[u^+] &= \|u^+\|_\mathcal{H}^2 - \int_{\R^N\times\R}  |u|^{p-1} u u^+ \tilde{V}(x) \tilde{d}(t) \der (x,t) ,
\end{align}
\begin{align}\label{eq:uminus}
-J'(u)[u^-] &= \|u^-\|_\mathcal{H}^2 + \int_{\R^N\times\R} |u|^{p-1} u u^- \tilde{V}(x) \tilde{d}(t) \der (x,t) .
    \end{align}

Now, let $(u_j)_{j \in \N} \subset \Hs$ be a Palais-Smale sequence for $J$.
Then from \eqref{eq:uplus}, \eqref{eq:uminus}, assumptions \ref{as:dgen} and \ref{as:vgen}, Hölder's inequality and Theorem \ref{theom:emb} we infer 
$$
	\|u_j^\pm\|_\mathcal{H}^2 \lesssim \left( \|\tilde{d}\|_\infty^\frac{1}{p+1} \left(\int_{\R^N\times\R} |u_j|^{p+1}\tilde{V}(x) \tilde{d}(t) \der (x,t)\right)^\frac{p}{p+1} + o(1)\right)\|u_j^\pm\|_\mathcal{H}
$$
and hence
\begin{equation} \label{eq:normbound}
	\|u_j\|_\mathcal{H} \lesssim  \left(\int_{\R^N\times\R} |u_j|^{p+1}\tilde{V}(x) \tilde{d}(t) \der (x,t)\right)^\frac{p}{p+1} + 1.
\end{equation}
Together with \eqref{eq:ableitungj} we obtain
\begin{align*}
\int_{\R^N\times\R} |u_j|^{p+1}\tilde{V}(x) \tilde{d}(t)\der (x,t) &\lesssim J(u_j)+ o(1) \|u_j\|_\mathcal{H}  \\
& \lesssim J(u_j) + \left(\int_{\R^N\times\R} |u_j|^{p+1}\tilde{V}(x) \tilde{d}(t)\der (x,t)\right)^\frac{p}{p+1} + 1.
\end{align*}
Therefore we have
$$
\int_{\R^N\times\R} |u_j|^{p+1}\tilde{V}(x) \tilde{d}(t)\der (x,t) \lesssim J(u_j)+1
$$
and again using \eqref{eq:normbound} it follows that
$$
\|u_j\|_\mathcal{H}^\frac{p+1}{p} \lesssim J(u_j)+1 .
$$
This completes the proof of \ref{lem:it:psbdd}.

Now let $u \in \calM$. Then $J'(u)[u^+]=J'(u)[u-u^-]=0$ and $J'(u)[u^-]=0$. Similarly as in the proof of \ref{lem:it:psbdd} we have by \eqref{eq:uplus}, \eqref{eq:uminus}, assumptions \ref{as:dgen} and \ref{as:vgen}, Hölder's inequality and Theorem \ref{theom:emb}
	\begin{align*}
	\|u^\pm\|^2_\mathcal{H}
	& \leq \|\tilde{d}\|_\infty^\frac{1}{p+1} \left(\int_{\R^N\times\R}  |u|^{p+1} \tilde{V}(x) \tilde{d}(t) \der (x,t)\right)^\frac{p}{p+1} \|u^\pm\|_{L^{p+1}_V}\\
	& \lesssim \|\tilde{d}\|_\infty^\frac{1}{p+1} \left(\int_{\R^N\times\R}  |u|^{p+1}\tilde{V}(x) \tilde{d}(t) \der (x,t)\right)^\frac{p}{p+1} \|u^\pm\|_\mathcal{H}.
	\end{align*}

Together with \eqref{eq:uinM} we obtain
\begin{align*}
    \norm{u}_\Hs \lesssim J(u)^{\frac{p}{p+1}} \text{ for } u \in \calM .
\end{align*}
This shows the second inequality in \eqref{eq:loc:manifold_est}.
On the other hand, by \eqref{eq:uinM} and Theorem \ref{theom:emb} we have $J(u)\lesssim \|u\|_\mathcal{H}^{p+1}$ for $u\in\mathcal{M}$ and since $u\not=0$ also the first inequality of  \eqref{eq:loc:manifold_est} holds.
\end{proof}

\begin{remark}\label{rem:4.2}
Functions of the form
\begin{align*}
    u(x,t)=\sum_{n=1}^K \sum_{l=1}^{d_n}u_{n,l}(t) \varphi_{n,l}(x)
\end{align*}
where all $u_{n,l} \in H^1(\R)$ have compact support are dense in $\Hs$.
\end{remark}
We can now state and prove our central existence result.
\begin{theorem}\label{thm:groundstate} The functional $J$ admits a ground state.
\end{theorem}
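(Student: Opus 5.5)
We follow the generalized Nehari manifold method of Szulkin and Weth and minimize $J$ over $\calM$. Set $c\coloneqq \inf_{\calM} J$. By Lemma \ref{lem:vorfunkt} \ref{lem:it:uniquecritpoint} and \eqref{eq:uinM} we have $c\ge 0$. Lemma \ref{lem:vorfunkt} \ref{lem:it:mbbdfromblw} combined with \eqref{eq:loc:manifold_est} gives $c>0$: otherwise a sequence in $\calM$ with $J\to 0$ would have $\|u\|_\Hs\to 0$, contradicting $\|u\|_\Hs\ge\eps$. The abstract part of Theorem 35 in \cite{szulkin_weth} also applies, since its hypotheses $(B_1),(B_2),(i),(ii)$ are exactly Lemma \ref{lem:vorfunkt} \ref{lem:it:wlsc}--\ref{lem:it:uniquecritpoint}. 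It yields a Palais--Smale sequence $(u_j)\subset\calM$ with $J(u_j)\to c$ and $J'(u_j)\to 0$. By Lemma \ref{lem:boundedness_of_PS} \ref{lem:it:psbdd} this sequence is bounded in $\Hs$, so we may assume $u_j\rightharpoonup u$ weakly in $\Hs$.

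\emph{Compact case \ref{as:9} \ref{as:a}.} Here hypothesis $(iii)$ holds by Lemma \ref{lem:vorfunkt} \ref{lem:it:complcont}, and Theorem 35 of \cite{szulkin_weth} directly gives a minimizer on $\calM$ that is a critical point. Concretely, complete continuity of $J_1'$ and weak convergence give $J'(u)=0$. Passing to the limit in the identities \eqref{eq:uplus} and \eqref{eq:uminus} shows $\|u_j^\pm\|_\Hs\to\|u^\pm\|_\Hs$, hence $u_j\to u$ strongly. Therefore $J(u)=c>0$ and $u\neq 0$, so $u$ lies in $\calM$ and is a ground state.

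\emph{Asymptotically periodic case \ref{as:9} \ref{as:b}.} Here we lack global compactness and must exclude vanishing. Suppose
\[
\sup_{m\in\Z}\int_{\R^N\times[mT,(m+1)T]}|u_j|^2V\dxt\to 0 .
\]
Then Theorem \ref{lem:cct} gives $u_j\to 0$ in $L^{p+1}_{Vd}$, which is admissible because $p+1\in(2,2+\frac1\alpha)$. By \eqref{eq:uinM} this forces $J(u_j)\to 0$, contradicting $c>0$. Hence there exist $m_j\in\Z$ and $\eta>0$ with
\[
\int_{\R^N\times[m_jT,(m_j+1)T]}|u_j|^2V\ge\eta .
\]
Define the time-translates $v_j(x,t)\coloneqq u_j(x,t+m_jT)$. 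Because $d$ and $\tilde d_\per$ are $T$-periodic, the projections $P^\pm_n$, the norm $\|\cdot\|_\Hs$ and $\calM$ for the periodic functional commute with integer shifts of size $T$. The key step is to treat the perturbation $\tilde d_\loc$. If $(m_j)$ is bounded, no translation is needed: compactness on $\R^N\times[-\tau,\tau]$ from Theorem \ref{theom:emb} gives a nonzero weak limit, and a weak limit of a Palais--Smale sequence is a critical point. This uses the local compact embedding together with the density of Remark \ref{rem:4.2}, testing $J'(u_j)\to0$ against compactly supported test functions and passing to the limit in the nonlinear term locally. Then Fatou's lemma applied to \eqref{eq:uinM} gives $J(u)\le c$, so $u$ is a ground state.

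If $|m_j|\to\infty$, then $v_j\rightharpoonup v\neq 0$ with $v$ a critical point of the purely periodic functional $J_\per$ (the one with $\tilde d_\loc=0$). In that case we need to compare levels, $c\le c_\per$. This follows because $\tilde d_\loc\ge 0$ gives $J\le J_\per$, so $\sup_{\Hs(w)}J\le\sup_{\Hs(w)}J_\per$, i.e. $c\le c_\per$. On the other hand $c_\per\le J_\per(v)\le\liminf J_\per(v_j)$ by Fatou, and $J_\per(u_j)-J(u_j)\to 0$ since the mass escapes from the support region of $\tilde d_\loc$; together these give $c_\per\le c$. If $\tilde d_\loc\not\equiv0$, a strict inequality $c<c_\per$ rules out this alternative. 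In the equality or purely periodic case, $v$ itself is a ground state (for $\tilde d_\loc\equiv0$ directly). If equality $c=c_\per$ is possible with $\tilde d_\loc\neq 0$, we would instead take $m(v)$, the point of Lemma \ref{lem:vorfunkt} \ref{lem:it:uniquecritpoint} for $J$ on $\Hs(v)$. Since $\tilde d_\loc$ is positive somewhere where $v$ is nonzero, we get $J(m(v))<J_\per(m(v))\le c_\per=c$, a contradiction. I expect this level comparison and the justification that $J_\per(u_j)-J(u_j)\to0$ to be the main technical obstacle.
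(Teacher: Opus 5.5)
Your architecture is sound and is organized differently from the paper. You treat the periodic and perturbed cases in one sweep, splitting according to whether the translations $m_j$ stay bounded. You get $c\le c_\per$ from the minimax characterization $c=\inf_{w}\max_{\Hs(w)}J$, whereas the paper first builds the periodic ground state $u^\per$ and compares with $m(u^\per)$. The compact case, the exclusion of vanishing and the bounded-$m_j$ case are fine; in the bounded case you should still note that a nontrivial critical point lies in $\calM$, which follows by testing $J'(u)=0$ with $u$.

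The case $|m_j|\to\infty$, however, contains two unjustified steps. First, $|m_j|\to\infty$ only locates one bump far out; it does not prevent $u_j$ from also keeping mass near $t=0$. So $J_\per(u_j)-J(u_j)=\frac{1}{p+1}\int|u_j|^{p+1}\tilde V\tilde d_\loc\dxt\to0$ does not follow: it needs $u_j\to0$ in $L^{p+1}_{Vd,\loc}$, which is exactly the extra hypothesis in the paper's contradiction argument. Likewise, $J_\per(v)\le\liminf J_\per(v_j)$ is not a Fatou statement, since $v_j\notin\calM_\per$ and ${J_\per}'(v_j)\to0$ is not known. Neither claim is needed. Because $\tilde d_\loc(\cdot+m_jT)\to0$ locally uniformly, $v$ is a nontrivial critical point of $J_\per$ whenever $|m_j|\to\infty$, so $v\in\calM_\per$. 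Then periodicity of $\tilde d_\per$, Fatou, $\tilde d_\loc\ge0$ and \eqref{eq:uinM} for $u_j\in\calM$ give
\[
c_\per\le J_\per(v)=\tfrac{p-1}{2(p+1)}\int|v|^{p+1}\tilde V\tilde d_\per\dxt\le\liminf_{j\to\infty}\tfrac{p-1}{2(p+1)}\int|u_j|^{p+1}\tilde V\tilde d_\per\dxt\le\liminf_{j\to\infty}J(u_j)=c ,
\]
which is the chain used in the paper.

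Second, in the equality case your contradiction rests on the claim that $\tilde d_\loc$ is positive somewhere where $v$ is nonzero. The relevant function is $m(v)=sv+w^-$, not $v$. Showing $\int|m(v)|^{p+1}\tilde V\tilde d_\loc\dxt>0$ would require a unique-continuation property that nothing in the paper provides. No contradiction is needed. Since $v=m_\per(v)$ is the global maximizer of $J_\per$ on $\Hs(v)$, you get $c\le J(m(v))\le J_\per(m(v))\le J_\per(v)=c_\per=c$. Hence $m(v)\in\calM$ attains $c$, and since $\calM$ is a natural constraint, it is a ground state. This is exactly how the paper handles $c=c^\per$, with $u^\per$ in place of $v$.

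With these two repairs your route is complete. It has the mild advantage of not requiring the periodic ground state in advance.
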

\begin{proof}
We use Theorem 35 in \cite{szulkin_weth} to show the existence of a ground state for our functional $J$. 
First we observe that by Lemma \ref{lem:vorfunkt} the conditions $(B_1), (B_2), (i)$ and $(ii)$ in Theorem 35 are satisfied.
In the case of assumption \ref{as:9} \ref{as:a} also condition $(iii)$ holds by Lemma \ref{lem:vorfunkt} and we directly obtain the existence of a ground state of $J$ on $\calM$ by Theorem 35.

For the rest of the proof we assume that \ref{as:9} \ref{as:b} holds.
By Theorem 35 in \cite{szulkin_weth} we obtain a minimizing  Palais-Smale sequence $(u_j)_{j \in \N}$ of $J$ in $\calM$. Lemma \ref{lem:5.3} guarantees that $(u_j)_{j \in \N}$ is bounded.
Thus, there exist $u \in \Hs$  and a subsequence (again denoted by $(u_j)_{j \in \N}$) such that $u_j \rightharpoonup u$ in $\Hs$ as $j \to \infty$.
By weak convergence we have $J_0'(u_j)[v] \to J_0'(u)[v]$ as $j \to \infty$ for all $v \in \Hs$.
Further, by Theorem \ref{theom:emb} we have 
\begin{align*}
J_1'(u_j)[v]= \int_{\R^N \times \R} |u_j|^{p-1} u_j v \tilde{V}(x) \tilde{d}(t) \dxt \to J_1'(u)[v]
\end{align*} 
as $j \to \infty$ for all $v \in \Hs$ with $\text{supp}(v) \subset \R^N \times [-\tau,\tau]$ and $\tau >0$.
By Remark \ref{rem:4.2} and the local compactness of the embedding from Theorem \ref{theom:emb} this is sufficient to conclude that $u$ is a critical point of $J$.

It remains to show that the weak limit $u$ is in $\calM$, meaning $u \notin \Hs^-$ and that it is a minimizer of $J$ on $\calM$. 
We distinguish two cases according to assumption \ref{as:9} \ref{as:b}. 
Note that the proofs are essentially contained in \cite{maier_reichel_schneider} and \cite{hor} but for the reader's convenience we sketch the arguments.

\textit{Periodic case}: We first consider $d=d_\per$ and $\tilde{d}=\tilde{d}_\per$ to be purely periodic with the same period $T$, i.e., $\tilde{d}_\loc=0$. In particular our functional $J$ has a periodic structure in time. This allows us to use concentration compactness arguments in order to bypass the lack of global compactness in time. We proceed in two steps:\\
Step 1: We show the existence of a new Palais-Smale sequence $(v_j)_{j \in \N}$ in $\Hs$ such that ${v_j \rightharpoonup v \in \calM}$ and $J{(v_j) \to \inf_{\calM} J}$ as $j \to \infty$.
Since $(u_j)_{j \in \N}$ is in $\calM$ and by Lemma \ref{lem:boundedness_of_PS}, there exists $c >0$ such that $\norm{u_j}_{L^{p+1}_{Vd}}\geq c >0$ for all $j \in \N$ and by Theorem \ref{lem:cct} follows 
\begin{align}\label{eq:46}
\liminf_{j \to \infty} \sup_{m \in \Z} \int_{\R^N \times [mT,(m+1)T]} |u_j|^{2}  V(x) \dxt >0 .
\end{align}
Therefore we find $\delta >0$, a sequence $(m_j)_{j \in \N} \in \Z$ of translations and a subsequence of $(u_j)_{j \in \N}$ (again denoted by $(u_j)_{j \in \N}$) such that
\begin{align*}
\int_{\R^N \times [m_jT,(m_j+1)T]} |u_j|^{2}  V(x) \dxt \geq \delta >0
\end{align*}
for all $j \in \N$. 
By setting
\begin{align*}
v_j(x,t)\coloneqq u_j(x,t+m_jT)
\end{align*}
we obtain a new Palais-Smale sequence $(v_j)_{j\in \N}$ for $J$ with $J(v_j)=J(u_j)$, ${\lim_{j \to \infty} J(v_j)=\inf_{\calM}J}$, and
\begin{align}\label{eq:vjest}
\int_{\R^N \times [0,T]} |v_j|^{2}  V(x) \dxt= \int_{\R^N \times [m_jT,(m_j+1)T]} |u_j|^{2}  V(x) \dxt \geq \delta >0
\end{align}
for all $j \in \N$. 
Again by Lemma \ref{lem:5.3} we know that $(v_j)_{j \in \N}$ is bounded in $\Hs$, thus $v_j \rightharpoonup v \in \Hs$ (for a subsequence) and from Theorem \ref{theom:emb} and \eqref{eq:vjest} we know $v \neq 0$.
The property $J'(v)=0$ follows from the observation from the beginning of the proof.
It is left to prove $v^+ \neq 0$.
For that we assume $v^+=0$, i.e. $v=v^-$. We test $J'(v)=0$ with $v$ and conclude
\begin{align*}
0=J'(v)[v] =-\|v^-\|_\Hs^2 -\int_{\R^N \times \R} |v|^{p+1} \tilde{V}(x) \tilde{d}(t) \dxt <0.
\end{align*}
This is a contradiction and thus our assumption is false. Therefore $v \in \calM$.

 Step 2: We show that $v$ minimizes $J$ on $\calM$.
Since $v \in \calM$ we obviously have $J(v) \geq \inf_\calM J$.
On the other hand $v_j \to v$ pointwise a.e. (for a subsequence) and the reverse inequality follows by Fatou's Lemma, i.e.
\begin{align*}
\inf_\calM J &= \lim_{j \to \infty} J(v_j) - \frac{1}{2} J'(v_j)[v_j]= \frac{p-1}{2(p+1)} \lim_{j \to \infty} \int_{\R^N \times \R} |v_j|^{p+1} \tilde{V}(x) \tilde{d}(t) \dxt \\
&\geq \frac{p-1}{2(p+1)}  \int_{\R^N \times \R} |v|^{p+1}\tilde{V}(x) \tilde{d}(t) \dxt = J(v) - \frac{1}{2} J'(v)[v]=J(v) .
\end{align*}

\textit{Perturbed periodic case}: In this case we have $d=d_\per$ and $\tilde{d}=\tilde{d}_\per+\tilde{d}_\loc$ with $\tilde{d}_\per , \tilde{d}_\loc \neq 0$. We consider the functional $J=J_0-J_1$ as introduced in \eqref{eq:J=J0-J1} and the auxiliary $J^\per = J_0 - J_1^\per$ with $J_0$ as before and 
\begin{align*}
    J_1^\per(u) = \frac{1}{p+1}\int_{\R^N\times\R} |u|^{p+1} \tilde{V}(x) \tilde{d}_\per (t) \der (x,t).
\end{align*}
By the previous case, $J^\per$ attains its minimum 
at the ground state level $c^\per = \inf\{ J^\per(u): u \in \mathcal{M}^\per\}=J^\per(u^\per) $ on the corresponding Nehari manifold
\begin{align*}
    \mathcal{M}^\per = \{ u\in \mathcal{H}\setminus\mathcal{H}^-: {J^\per}'(u)[w] = 0 \mbox{ for all }
 w\in [u]+\mathcal{H}^-\} .
\end{align*}
With this information in hand we will show that $J$ attains its minimum on $\mathcal{M}$ at the ground state level $c$. 
First, since $\tilde{d}_\per \leq \tilde{d}_\per + \tilde{d}_\loc$, we have $J(u)\leq J^\per(u)$ for all $u \in \mathcal{H}$. Next, according to Lemma \ref{lem:vorfunkt} we introduce the maps $m: \mathcal{H}\setminus \mathcal{H}^- \to \mathcal{M}$ and $m^\per: \mathcal{H}\setminus \mathcal{H}^- \to \mathcal{M}^\per$ and we have $m^\per(u^\per)=u^\per \not \in \mathcal{H}^-$. We define $u \coloneqq m(u^\per) \in [0,\infty)u^\per + \mathcal{H}^-$ and from Lemma \ref{lem:vorfunkt} we conclude $J^\per(u) \leq J^\per(m^\per(u^\per))= J^\per(u^\per)=c^\per$. In total we have 
\begin{align*}
    c \leq J(u) \leq J^\per(u) \leq J^\per(u^\per)=c^\per .
\end{align*} 
In the case $c=c^\per$ we have $J(u)=J^\per(u)=J^\per(u^\per)$ and this implies in particular $\tilde{d}_\loc u =0$ and $u=u^\per$ by the uniqueness of the maximizer from Lemma \ref{lem:vorfunkt}. 
Lastly, we analyze the case $c<c^\per$ and show that $c$ is attained.
Let $(u_j)_{j \in \N}$ be the minimizing Palais-Smale sequence of $J$ on $\mathcal{M}$ from the beginning of the proof. By Lemma \ref{lem:boundedness_of_PS}, $u_j \not \to 0$ in $L^{p+1}_{Vd}(\R^N\times \R)$ as $j \to \infty$ and by Lemma \ref{lem:cct} there exists $\delta>0$, a sequence of translations $(m_j)_{j \in \N}$ and a subsequence of $(u_j)_{j \in \N}$ (again denoted by $(u_j)_{j \in \N}$) such that
\begin{align}\label{eq:estbelow}
    \int_{\R^N \times [m_jT,(m_j+1)T]}|u_j|^{p+1}V(x) \dxt \geq \delta >0
\end{align}
for all $j \in \N$.
Now we assume there exists a subsequence of $(u_j)_{j \in \N}$ (again denoted by $(u_j)_{j \in \N}$) such that $(u_j)_{j\in \N}\to 0$ in $L^{p+1}_{Vd, \loc}(\R^N \times \R)$ as $j \to \infty$. Then from \eqref{eq:estbelow} we conclude $|m_j| \to \infty$ as $j \to \infty$.
The sequence $v_j(x,t)\coloneqq u_j(x,t-m_j T)$ is again bounded in $\mathcal{H}$, and hence there exists a subsequence (again denoted by $(v_j)_{j \in \N})$ and $v \in \mathcal{H}$ such that $v_j \rightharpoonup v$ in $\mathcal{H}$, $v_j \to v$ in $L^{p+1}_{Vd,\loc}(\R^N \times \R)$ and pointwise a.e., as $j \to \infty$. In particular, $v \neq 0$ by \eqref{eq:estbelow} and Theorem~\ref{theom:emb}.
Next, we show that $v \in \mathcal{M}^\per$ is a nontrivial ciritical point of $J^\per$. For that, we take $\varphi \in \mathcal{M}$ with compact support according to Remark \ref{rem:4.2} and define $\varphi_j(x,t)\coloneqq \varphi(x,t+m_jT)$. Since $(u_j)_{j \in \N}$ is a Palais-Smale sequence of $J$, we have $J'(u_j)[\varphi_j] \to 0$ as $j \to \infty$. Further we calculate
\begin{align*}
    J'(u_j)[\varphi_j]&=J_0'(u_j)[\varphi_j]-J_1'(u_j)[\varphi_j] \\
    &=J_0'(u_j)[\varphi_j]- {J_1^\per}' (u_j)[\varphi_j]- \int_{\R^N \times \R} |u_j|^{p-1}u_j \varphi_j \tilde{V}(x) \tilde{d}_\loc (t) \dxt \\
    &=J_0'(v_j)[\varphi]- {J_1^\per}' (v_j)[\varphi]- \int_{\R^N \times \R} |v_j|^{p-1}v_j \varphi \tilde{V}(x) \tilde{d}_\loc (t-m_jT) \dxt.
\end{align*}
Since $v_j\pm \rightharpoonup v^\pm$ in $\mathcal{H}$, $v_j \to v$ in $L^{p+1}_{Vd, \loc}(\R^N \times R)$, $\varphi$ has compact support and $\tilde{d}_\loc$ is localized, we conclude $ J'(u_j)[\varphi_j] \to {J^\per}'(v)[\varphi] $ as $j \to \infty$ and hence $v \in \mathcal{M}$ is a nontrivial critical point of $J^\per$.
Lastly we have
\begin{align*}
    c^\per\leq J^\per(v)& = J^\per(v)-\frac{1}{2} {J^\per}'(v)[v] \\
    &= \frac{p-1}{2(p+1)} \int_{\R^N\times\R} \tilde{d}_\per(t)\tilde{V}(x) |v|^{p+1} \der (x,t) \\
    & \leq \frac{p-1}{2(p+1)} \liminf_{j\in\N} \int_{\R^N\times\R} \tilde{d}_\per(t) \tilde{V}(x) |v_j|^{p+1} \der (x,t) \\
    & \leq \frac{p-1}{2(p+1)} \liminf_{j\in\N} \int_{\R^N\times\R} \tilde{d}(t) \tilde{V}(x) |v_j|^{p+1} \der (x,t) \\
    & = \liminf_{j\in\N} \left(J(u_j)-\frac{1}{2} J'(u_j)[u_n]\right) \\
    & = c.
\end{align*}
This is a contradiction to the inequality $c < c^\per$ from the considerations above. Hence our assumption was wrong and $u_j \not \to 0$ in $L^{p+1}_{Vd, \loc}(\R^N \times \R)$. From this we can conclude that there exists $u \in \mathcal{M}$ and a subsequence of $(u_j)_{j \in \N}$ (again denoted by $(u_j)_{j \in \N}$ such that $u_j \rightharpoonup u$ as $j \to \infty$ and $c=\inf_\mathcal{M}J=J(u)$.
\end{proof}

\subsection{Regularity}\label{subsec:reg}
So far the solution $u$ to \eqref{eq:main} that we obtained from Theorem \ref{thm:main} lies in $\Hs$ and by Theorem \ref{theom:emb} also in $L^q_{Vd}(\R^N \times \R)$ for $q \in \left[2, 2+\frac{1}{\alpha}\right)$.
In the following we present the proofs of further regularity properties of $u$ as stated in Theorem \ref{thm:main} and Theorem \ref{them:reg}.
Some parts are based on the ideas of Chapter 6 in \cite{maier_reichel_schneider} and Chapter 2 in \cite{hor}. 
\medskip

First, we study the linear problem
\begin{align}\label{eq:lineq}
    V(x) \partial_t^2 w + d(t) M(x,\nabla_x) w = f(x,t) V(x) d(t) .
\end{align}
\begin{lemma}\label{lem:reglin}
Let $f \in \mathcal{H}'$. Then there exists a unique solution $w \in \mathcal{H}$ to
\begin{align}\label{eq:linprob}
    \sum_{n\in \N} \sum_{l=1}^{d_n}b_{L_n}(w_{n,l},\phi_{n,l}) 
	= \langle f, \phi \rangle_{\mathcal{H}' \times \mathcal{H}} \quad \text{ for all } \phi \in \mathcal{H}
\end{align}
with $\norm{w}_\mathcal{H}=\norm{f}_{\mathcal{H}'}$.\\
Further, if $f \in L^2_{Vd}(\R^N \times \R)$ then ${\sqrt{\frac{1}{V}M}w \in L^2_{Vd}(\R^N \times \R)}$ and ${\partial_t w\in L^2_V(\R^N\times\R)}$. 
\end{lemma}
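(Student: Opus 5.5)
The first part is a Riesz-representation argument adapted to the indefinite form. Define the bounded symmetric bilinear form
\[
B(w,\phi)\coloneqq\sum_{n\in\N}\sum_{l=1}^{d_n} b_{L_n}(w_{n,l},\phi_{n,l})
=\langle w^+,\phi^+\rangle_{\Hs}-\langle w^-,\phi^-\rangle_{\Hs},
\]
using \eqref{eq:bln} and the orthogonal splitting $\Hs=\Hs^+\oplus\Hs^-$. Let $S\coloneqq P^+-P^-$. This is a unitary involution on $\Hs$, and $B(w,\phi)=\langle Sw,\phi\rangle_\Hs$.

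By Riesz there is a unique $g\in\Hs$ with $\langle f,\phi\rangle=\langle g,\phi\rangle_\Hs$ for all $\phi$ and $\norm{g}_\Hs=\norm{f}_{\Hs'}$. Set $w\coloneqq Sg=g^+-g^-$. Then \eqref{eq:linprob} holds. Uniqueness follows because $Sw=0$ forces $w=0$. Since $S$ is unitary, $\norm{w}_\Hs=\norm{g}_\Hs=\norm{f}_{\Hs'}$.

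For the regularity statement, take $f\in L^2_{Vd}$. By the continuous embedding $\Hs\hookrightarrow L^2_{Vd}$, $f$ defines an element of $\Hs'$. Expand $f=\sum f_{n,l}(t)\varphi_{n,l}(x)$ with $f_{n,l}\in L^2_d(\R)$ and $\sum\norm{f_{n,l}}_{L^2_d}^2=\norm{f}_{L^2_{Vd}}^2$. Testing \eqref{eq:linprob} with $\phi=\psi(t)\varphi_{n,l}(x)$ decouples the problem: $b_{L_n}(w_{n,l},\psi)=\langle f_{n,l},\psi\rangle_{L^2_d}$ for all $\psi\in H^1(\R)$. In the spectral representation \eqref{eq:specres} this reads $w_{n,l}=\int(\nu_n-\lambda)^{-1}\,\mathrm{d}P_\lambda f_{n,l}$.

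Now estimate the two quantities separately.

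\emph{Spatial part.} We have $\sqrt{\tfrac1VM}w=\sum\sqrt{\nu_n}\,w_{n,l}\varphi_{n,l}$, so its $L^2_{Vd}$-norm squared is $\sum\nu_n\norm{w_{n,l}}^2_{L^2_d}$. Remark \ref{rem:specest} gives $\abs{\nu_n-\lambda}\ge c\sqrt{\nu_n}$, so this sum is bounded by $c^{-2}\sum\norm{f_{n,l}}^2_{L^2_d}$. When $\nu_1=0$ the $n=1$ term vanishes, so there is no issue.

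\emph{Time derivative.} Here $\norm{\partial_t w}^2_{L^2_V}=\sum\norm{w_{n,l}'}^2_{L^2(\R)}$, and $\norm{v'}_{L^2}^2=\int\lambda\,\mathrm{d}\langle P_\lambda v,v\rangle_{L^2_d}$ with $\lambda\ge0$. So we need a uniform bound
\[
\sup_{\lambda\in\sigma}\frac{\lambda}{(\nu_n-\lambda)^2}\lesssim 1 \quad\text{uniformly in } n.
\]
Write $\lambda/(\nu_n-\lambda)^2=\lambda/\bigl[(\sqrt\lambda-\sqrt{\nu_n})^2(\sqrt\lambda+\sqrt{\nu_n})^2\bigr]$. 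Since $\lambda\le(\sqrt\lambda+\sqrt{\nu_n})^2$, this is at most $(\sqrt\lambda-\sqrt{\nu_n})^{-2}$. That quantity is bounded by \ref{as:last}, uniformly in $n$ and $\lambda$.

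Summing gives $\norm{\partial_t w}_{L^2_V}\lesssim\norm{f}_{L^2_{Vd}}$.

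The main obstacle is rigour rather than estimates. One must justify that the decoupled spectral formula identifies the weak solution $w$; this follows from uniqueness. One must also justify that $\int\lambda\,\mathrm{d}P_\lambda$ represents $\norm{v'}^2$ on the form domain $H^1$, which holds because the Friedrichs form of $-\tfrac1d\partial_t^2$ on $L^2_d$ is $\int|v'|^2\,\mathrm{d}t$. Finally, the series must converge in the stated spaces; monotone convergence over the $(n,l)$ sums handles this.
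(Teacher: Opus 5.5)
Your proposal is correct. The first part (Riesz representation plus the involution $S=P^+-P^-$, giving $w=Sg$) is exactly the paper's argument. Your bound on $\sqrt{\tfrac1V M}\,w$, via the spectral formula $w_{n,l}=\int(\nu_n-\lambda)^{-1}\,\mathrm{d}P_\lambda f_{n,l}$ and Remark~\ref{rem:specest}, also coincides with the paper's.

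You differ in the estimate for $\partial_t w$. The paper uses no spectral multiplier there. It tests the form with $w$ itself and uses \eqref{eq:bln} to get $\sum b_{L_n}(w_{n,l},w_{n,l})=\norm{w^+}_\Hs^2-\norm{w^-}_\Hs^2\ge-\norm{w}_\Hs^2$. Rearranging gives
\[
\norm{\partial_t w}_{L^2_V(\R^N\times\R)}^2\le\norm{\sqrt{\tfrac1V M}\,w}_{L^2_{Vd}(\R^N\times\R)}^2+\norm{w}_\Hs^2 ,
\]
so time regularity follows from the already established spatial bound together with $\norm{w}_\Hs=\norm{f}_{\Hs'}$.

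You instead bound the multiplier directly: $\lambda/(\nu_n-\lambda)^2\le(\sqrt\lambda-\sqrt{\nu_n})^{-2}$, which \ref{as:last} controls uniformly.
\begin{itemize}
\item Your route yields the self-contained estimate $\norm{\partial_t w}_{L^2_V}\le\delta^{-1}\norm{f}_{L^2_{Vd}}$, with $\delta$ the gap constant in \ref{as:last}. It does not rely on the spatial estimate. The cost is identifying $\norm{v'}_{L^2(\R)}^2$ with $\int\lambda\,\mathrm{d}\norm{P_\lambda v}_{L^2_d(\R)}^2$, i.e.\ with the quadratic form of $-\frac{1}{d(t)}\dtsquare$, which is routine.
\item The paper's route stays at the level of the forms. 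It proves the structural fact that every $w\in\Hs$ with $\sqrt{\tfrac1V M}\,w\in L^2_{Vd}$ automatically has $\partial_t w\in L^2_V$, independently of the equation.
\end{itemize}
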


\begin{proof}
As in Lemma 6.1 in \cite{maier_reichel_schneider}, from the decomposition of $\Hs= \Hs^+ \oplus \Hs^-$ and ${\Hs'= (\Hs^+)' \oplus (\Hs^-)'}$ together with the Riesz representation theorem we infer the first statement.
For the proof of the second statement we use the spectral resolution as in \eqref{eq:specres}
to expand 
\begin{align*}
	f(x, t) =\sum_{n \in \N} \sum_{l=1}^{d_n} \varphi_{n,l}(x) \left(\int_{\R} \der P_\lambda[ f_{n,l}]\right)(t)
\end{align*}
so that the solution $w$ to \eqref{eq:linprob} is of the form
\begin{align*}
	w(x,t) =\sum_{n \in \N} \sum_{l=1}^{d_n}\varphi_{n,l}(x) \left(\int_{\R} \frac{1}{-\lambda + \nu_n} \der P_\lambda[ f_{n,l}]\right) (t) .
\end{align*}
Further we have
\begin{align*}
	\sqrt{\frac{1}{V(x)}M(x, \nabla_x)} \ w(x, t) = \sum_{n \in \N}  \sum_{l=1}^{d_n}\varphi_{n,l}(x) \left( \int_{\R} \frac{\sqrt{\nu_n}}{-\lambda + \nu_n} \der P_\lambda[f_{n,l}] \right)(t) .
\end{align*}

By Remark \ref{rem:specest} we know
\begin{align*}
    \frac{\sqrt{\nu_n}}{|\lambda - \nu_n|}\leq c
\end{align*}
for all $n \in \N$ and $\lambda \in \sigma(- \frac{1}{d(t)} \dtsquare)$ and hence 
\begin{align*}
    \norm{ \sqrt{\frac{1}{V(x)}M(x, \nabla_x)} w}_{L^2_{Vd}(\R^N \times \R)}^2 &= \sum_{n \in \N} \sum_{l=1}^{d_n}\int_\R \frac{\nu_n}{(\lambda-\nu_n)^2 } \der \norm{P_\lambda f_{n,l}}_{L^2_d(\R)}^2  \\
    &  \leq c^2 \sum_{n \in \N} \sum_{l=1}^{d_n} \int_\R 1 \der \norm{P_\lambda f_{n,l}}_{L^2_d(\R)}^2 =  c^2 \norm{f}_{L^2_{Vd}(\R^N \times \R)}^2 .
\end{align*}

Next, by the property \eqref{eq:bln} of $b_{L_n}$ we can estimate
\begin{align*}
\sum_{n\in \N} \sum_{l=1}^{d_n} b_{L_n}(w_{n,l},w_{n,l})&=
    \sum_{n\in \N}\sum_{l=1}^{d_n}\int_\R -| w_{n,l}'|^2 + \nu_n d(t) |w_{n,l}|^2 \der t  = \norm{w^+}_{\mathcal{H}}^2-\norm{w^-}_{\mathcal{H}}^2 \\
    &\geq -\norm{w^+}_{\mathcal{H}}^2- \norm{w^-}_{\mathcal{H}}^2= - \norm{w}_\mathcal{H}^2 .
\end{align*}
This implies
\begin{align*}
    \sum_{n\in \N} \sum_{l=1}^{d_n}\int_\R | w_{n,l}'|^2 \der t &\leq \sum_{n\in \N} \sum_{l=1}^{d_n}\int_\R \nu_n d(t)| w_{n,l}|^2 \der t + \norm{w}_\mathcal{H}^2\\
	&=\norm{ \sqrt{\frac{1}{V(x)}M(x, \nabla_x)} w}_{L^2_{Vd}(\R^N \times \R)}^2 + \norm{w}_\mathcal{H}^2 ,
\end{align*}
so that we finally obtain
\begin{align*}
    \norm{ \partial_t w}_{L^2_V(\R^N \times \R)}^2 = \sum_{n \in \N}\sum_{l=1}^{d_n} \norm{w_{n,l}'}_{L^2(\R)}^2 < \infty. &\qedhere
\end{align*}
\end{proof}
Next, for both of our examples, we analyze what $\sqrt{\frac{1}{V}M}w \in L^2_{Vd}(\R^N\times \R)$ means for a function $w \in \mathcal{H}$.
We start with Example \ref{ex:qho} and prepare the corresponding result with a lemma. 
\begin{lemma}\label{lem:betteremb}
    Let $M$ and $V$ be as in Example \ref{ex:qho}. Then $w \in \Hs$ implies $x w , \partial_x w \in L^2(\R\times \R)$.
\end{lemma}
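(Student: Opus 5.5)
The plan is to bound $\norm{xw}_{L^2}$ and $\norm{\partial_x w}_{L^2}$ by $\norm{w}_\Hs$, using the spectral gap estimate \eqref{eq:specest} together with the structure of the quantum harmonic oscillator. In Example \ref{ex:qho} we have $V=1$ and $M=H^2$ with $H=-\dxsquare+x^2$. The $\varphi_n$ are the Hermite functions, $d_n=1$, and $\nu_n=(2n+1)^2$, so $\sqrt{\tfrac1V M}=H$ and $\nu_n^{1/4}=\sqrt{2n+1}$.

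The key spatial identity is
\begin{align*}
\norm{\partial_x \phi}_{L^2(\R)}^2+\norm{x\phi}_{L^2(\R)}^2=\ip{H\phi}{\phi}_{L^2(\R)}=\sum_n (2n+1)\abs{\ip{\phi}{\varphi_n}}^2
\end{align*}
for $\phi$ in the form domain of $H$. Applying this at each fixed $t$ and integrating in $t$ gives
\begin{align*}
\norm{\partial_x w}_{L^2(\R\times\R)}^2+\norm{xw}_{L^2(\R\times\R)}^2=\sum_n \sqrt{\nu_n}\,\norm{w_n}_{L^2(\R)}^2 .
\end{align*}
Since $d$ is bounded above and below by \ref{as:dgen}, the norms $\norm{\cdot}_{L^2(\R)}$ and $\norm{\cdot}_{L^2_d(\R)}$ are equivalent, so it suffices to show $\sqrt{\nu_n}\norm{w_n}_{L^2_d}^2\lesssim \norm{w_n}_{\Hs_n}^2$ uniformly in $n$.

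This follows from the spectral resolution \eqref{eq:specres}:
\begin{align*}
\norm{w_n}_{\Hs_n}^2=\int_\R \abs{-\lambda+\nu_n}\der\norm{P_\lambda w_n}_{L^2_d}^2\ \ge\ c\sqrt{\nu_n}\int_\R \der\norm{P_\lambda w_n}_{L^2_d}^2=c\sqrt{\nu_n}\norm{w_n}_{L^2_d}^2,
\end{align*}
where the inequality is exactly Remark \ref{rem:specest}, used on the support of the spectral measure. Summing over $n$ yields $\norm{xw}^2+\norm{\partial_x w}^2\lesssim\norm{w}_\Hs^2<\infty$.

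The main obstacle is justifying that the identity holds for $w\in\Hs$ and is not merely formal, since the series defining $w$ need not have terms in the form domain a priori. I would first prove the estimate for the dense class from Remark \ref{rem:4.2}, namely finite sums with compactly supported $H^1$ coefficients, where everything is classical. Then I would pass to general $w\in\Hs$ by approximation. Approximants $w^{(k)}\to w$ in $\Hs$ converge in $L^2(\R\times\R)$, because $\Hs$ embeds into $L^2_{Vd}$, and the sequences $xw^{(k)}$ and $\partial_x w^{(k)}$ are Cauchy in $L^2$ by the estimate. Their limits therefore coincide with the distributional quantities $xw$ and $\partial_x w$.
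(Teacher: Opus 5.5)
Your proposal is correct and follows the paper's argument: the spectral-gap bound \eqref{eq:specest} gives $\norm{w}_\Hs^2 \geq c\sum_n \sqrt{\nu_n}\norm{w_n}_{L^2_d(\R)}^2$, and this sum is the quadratic form of $-\partial_x^2+x^2$, namely $\int_{\R\times\R} (\abs{\partial_x w}^2 + x^2\abs{w}^2)\, d(t)\dxt$. Your version is slightly more careful than the paper's. The paper writes the intermediate quantity as $\norm{\sqrt{\tfrac1V M}w}_{L^2_{Vd}}^2$, which is really $\sum_n \nu_n\norm{w_n}_{L^2_d(\R)}^2$, instead of $\ip{\sqrt{\tfrac1V M}w}{w}_{L^2_{Vd}}$ as you correctly have, and it omits the density justification you give via Remark \ref{rem:4.2}.
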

\begin{proof}
Since $V=1$ and $M=(-\partial_x^2 + x^2)^2$ we have $\sqrt{\frac{1}{V}M}=-\partial_x^2 + x^2$. Consider the associated bilinearform 
\begin{align*}
    b(z,z)=\int_\R |z'|^2+  x^2 |z| ^2 \dx 
\end{align*}
 for $z \in H^1(\R)$.
 Using the spectral resolution \eqref{eq:specres} and the estimate \eqref{eq:specest} there exists $c>0$ such that for $w\in \Hs$  we obtain
 \begin{align*}
     \|w\|_{\Hs}^2&= \sum_{n \in \N} \int_\R |-\lambda + \nu_n| \der \norm{P_\lambda  w_n}_{L^2_d(\R)}^2 \\
     &\geq c \sum_{n \in \N} \int_\R \sqrt{\nu_n} \der \norm{P_\lambda  w_n}_{L^2_d(\R)}^2 \\
     &= c\sum_{n \in \N} \sqrt{\nu_n} \|w_n\|_{L^2_d(\R)}^2 \\
     &= c\left\| \sqrt{\frac{1}{V}M}w \right\|_{L^2_{Vd}(\R \times \R)}^2 \\
     &=c\int_{\R \times \R} (|\partial_x w|^2 + x^2 |w|^2) d(t) \dxt 
 \end{align*}
and hence 
\begin{align*}
 \partial_x w , xw \in L^2(\R\times \R).
\end{align*}
\end{proof}

\begin{lemma}\label{lem:regqho}
Let $V$ and $M$ be as in Example \ref{ex:qho}. Then $w \in \Hs$ and  $\sqrt{\frac{1}{V}M}w \in L^2_{Vd}(\R \times \R)$ implies  $xw , \partial_xw,  x^2w, x \partial_x w, \partial_x^2 w \in L^2(\R\times \R)$.
\end{lemma}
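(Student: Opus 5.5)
The first two memberships, $xw,\partial_x w\in L^2(\R\times\R)$, follow from Lemma \ref{lem:betteremb}, since $w\in\Hs$. The remaining three are second-order quantities. I would reduce them to the $L^2$-bound on $Hw$, where $H\coloneqq\sqrt{\tfrac1V M}=-\partial_x^2+x^2$, by exploiting the harmonic-oscillator structure for almost every fixed $t$.

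\emph{Step 1 (reduction to fixed time).} Since $V=1$ and $\ess\inf d>0$ by \ref{as:dgen}, the hypothesis gives
\begin{align*}
\int_\R \norm{Hw(\cdot,t)}_{L^2(\R)}^2 \dt <\infty ,
\end{align*}
so $z\coloneqq w(\cdot,t)$ lies in the domain of $H$ (the form/operator sense defined through the Hermite expansion) for a.e. $t$. It therefore suffices to prove a pointwise-in-time estimate
\begin{align*}
\norm{x^2z}_{L^2}^2+\norm{x z'}_{L^2}^2+\norm{z''}_{L^2}^2 \lesssim \norm{Hz}_{L^2}^2+\norm{z}_{L^2}^2
\end{align*}
for all $z\in D(H)$, with a constant independent of $t$. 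Integrating in $t$ and using $w\in L^2(\R\times\R)$ (from $\Hs\hookrightarrow L^2_{Vd}$) then gives the claim.

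\emph{Step 2 (the elliptic estimate).} I would prove this for $z$ in the span of Hermite functions, which is a core for $H$, and then pass to the limit by closedness. Expanding the square gives
\begin{align*}
\norm{Hz}^2=\norm{z''}^2+\norm{x^2z}^2-2\,\mathrm{Re}\int z''\,x^2\bar z\dx .
\end{align*}
Integrating by parts twice,
\begin{align*}
-\mathrm{Re}\int z''x^2\bar z\dx=\int x^2|z'|^2\dx+\mathrm{Re}\int 2x z'\bar z\dx=\int x^2|z'|^2\dx-\int|z|^2\dx ,
\end{align*}
where the last equality uses $\mathrm{Re}\int 2xz'\bar z\dx=\int x(|z|^2)'\dx=-\int|z|^2\dx$. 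Hence
\begin{align*}
\norm{Hz}^2=\norm{z''}^2+\norm{x^2z}^2+2\norm{xz'}^2-2\norm{z}^2 ,
\end{align*}
which yields the desired estimate with constant $1$ after adding $2\norm z^2$ to both sides.

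\emph{Step 3 (density and identification).} The main obstacle is justifying the calculation for a general $z\in D(H)$ and making sure the derivatives $\partial_x^2 w$ and $x\partial_x w$ are the distributional ones. I would approximate $z$ by Hermite partial sums $z_k\to z$ with $Hz_k\to Hz$ in $L^2$. The identity of Step 2 then shows that $(z_k'')$, $(x^2z_k)$ and $(xz_k')$ are Cauchy in $L^2$, and their limits coincide with $z''$, $x^2z$ and $xz'$ in the distributional sense. Measurability in $t$ is automatic, since $w(x,t)=\sum_n w_n(t)\varphi_n(x)$ is a sum of measurable products. Integrating the estimate over $t$ concludes the proof.
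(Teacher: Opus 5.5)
Your proposal is correct and follows the paper's route: you take $xw,\partial_x w$ from Lemma \ref{lem:betteremb} and prove the same integration-by-parts identity $\norm{(-\partial_x^2+x^2)z}^2+2\norm{z}^2=\norm{z''}^2+\norm{x^2z}^2+2\norm{xz'}^2$ on finite Hermite sums, then conclude by density. You differ only in working at fixed $t$ and approximating by Hermite truncations that converge in the graph norm of $-\partial_x^2+x^2$; this is exactly the convergence needed to pass to the limit in the identity, whereas the paper cites Remark \ref{rem:4.2}, which gives density in $\Hs$, so your version makes the paper's density step precise.
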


\begin{proof}
    By Lemma \ref{lem:betteremb} we already know that $x w, \partial_x w \in L^2(\R\times \R)$. First we consider the function $ w(x,t)=\sum_{n=1}^K w_n(t) \varphi_n(x)$ where $w_n \in H^1(\R)$ have compact support. 
    Then using integration by parts we obtain
    \begin{align*}
          \int_{\R \times \R} \left|-\partial_x^2 w + x^2 w\right|^2 \dxt &= \int_{\R \times \R}  (\partial_x^2 w)^2 + x^4 w^2 - 2 x^2 w \partial_x^2 w \dxt         \\
          &= \int_{\R \times \R} (\partial_x^2 w)^2 + x^4 w^2 + 2 x^2 (\partial_x w)^2 - 2 w^2 \dxt
    \end{align*}
which is equivalent to 
\begin{align*}
    \norm{(-\partial_x^2 +x^2)w}_{L^2(\R \times \R)}^2+ 2 \norm{w}_{L^2(\R \times \R)}^2= \norm{\partial_x^2 w}_{L^2(\R \times \R)}^2 + \norm{x^2 w}_{L^2(\R \times \R)}^2+ 2 \norm{x \partial_x w}_{L^2(\R \times \R)}^2 .
\end{align*}
Finally, the claim follows by density according to Remark \ref{rem:4.2}.
\end{proof}

We continue with the second example for $V$ and $M$. 
\begin{lemma}\label{lem:regwl} Let $M$ and $V$ be as in Example \ref{ex:wl} and $w \in \Hs$. Then  $\sqrt{\frac{1}{V}M}w \in L^2_{Vd}(\R^N \times \R)$ implies $\nabla_x w \in L^2_{d}(\R^N\times \R)$.    
\end{lemma}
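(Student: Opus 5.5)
Throughout, $M=-\Delta+V$ with $V(x)=(1+|x|^2)^{-2}$, so that $\frac{1}{V}M=\frac{1}{V}(-\Delta)+1$. The plan is to show that, for $w\in\Hs$,
\begin{align*}
\norm{\sqrt{\tfrac1V M}\,w}_{L^2_{Vd}}^2=\int_{\R^N\times\R}\big(|\nabla_x w|^2+V|w|^2\big)\,d(t)\dxt .
\end{align*}
In other words, $\norm{\sqrt{\tfrac1V M}\,w}_{L^2_{Vd}}^2$ is exactly the quadratic form of $M$, integrated in time against $d$. Once this identity is available, the lemma follows at once: the right-hand side is finite, and $V|w|^2\ge0$, so $\nabla_x w\in L^2_d(\R^N\times\R)$.

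\textbf{Step 1 (dense class).} Take $w(x,t)=\sum_{n\le K}\sum_l w_{n,l}(t)\varphi_{n,l}(x)$ with compactly supported $w_{n,l}\in H^1(\R)$, as in Remark~\ref{rem:4.2}. For each fixed $t$, $w(\cdot,t)$ lies in the operator domain, being a finite sum of eigenfunctions. Hence
\begin{align*}
\norm{\sqrt{\tfrac1V M}\,w(\cdot,t)}_{L^2_V}^2=\ip{\tfrac1V M w}{w}_{L^2_V}=\int_{\R^N}(-\Delta w+Vw)\,w\dx .
\end{align*}
Integrating by parts in $x$ gives $\int_{\R^N}|\nabla_x w|^2+V|w|^2\dx$. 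To justify the integration by parts, I will use that the eigenfunctions are explicitly known (they arise from the conformal/stereographic map to the sphere) and decay together with their gradients; alternatively, the identity is simply the definition of the form domain of the Friedrichs realization. Multiplying by $d(t)$ and integrating in $t$ gives the identity above on the dense class.

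\textbf{Step 2 (general $w$).} For general $w\in\Hs$ with $\sqrt{\frac1VM}w\in L^2_{Vd}$, I approximate by spectral truncations rather than by arbitrary dense elements. Set $w^K=\sum_{n\le K}\sum_l w_{n,l}\varphi_{n,l}$, and cut off in time if necessary. Then $\sqrt{\frac1VM}w^K\to\sqrt{\frac1VM}w$ in $L^2_{Vd}$ by monotone convergence of $\sum_n \nu_n\norm{w_{n,l}}_{L^2_d}^2$. By the identity from Step 1, $(\nabla_x w^K)$ is Cauchy in $L^2_d$. Its limit must equal $\nabla_x w$ in the distributional sense, because $w^K\to w$ in $L^2_{Vd}$, and $V$ is locally bounded below by \ref{as:vgen}, so $w^K\to w$ in $L^2_{\loc}$. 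This gives $\nabla_x w\in L^2_d$.

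\textbf{Main obstacle.} The delicate point is Step 1: identifying the abstract self-adjoint operator from \ref{as:MV} with the form $\int|\nabla z|^2+V|z|^2$ on eigenfunctions, that is, ruling out boundary terms at infinity given the degenerate weight $V\sim|x|^{-4}$. My first attempt will be to verify directly, in the appendix proof of Example~\ref{ex:wl}, that $\varphi_{n,l}$ and $\nabla\varphi_{n,l}$ decay like $|x|^{-(N-2)}$ and $|x|^{-(N-1)}$. With these rates the boundary integrals over large spheres behave like $R^{N-1}\cdot R^{-(2N-3)}\to0$ (using $N\ge2$; for $N=2$ I would check the decay more carefully).
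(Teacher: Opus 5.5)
Your argument is correct. It rests on the same identity as the paper's proof, namely $\norm{\sqrt{\tfrac1VM}\,w}_{L^2_{Vd}}^2=\int_{\R^N\times\R}(|\nabla_x w|^2+V|w|^2)\,d(t)\dxt$, but you establish and extend it by hand rather than abstractly.

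The paper defines $b(w,z)=\int(\nabla_x w\cdot\nabla_x z+wzV)\,d(t)\dxt$ on $D(\tfrac1VM)$. It then invokes Kato's second representation theorem to get $D(\sqrt{\tfrac1VM})=D(b)$ with $b(w,z)=\ip{\sqrt{\tfrac1VM}w}{\sqrt{\tfrac1VM}z}_{L^2_{Vd}}$. That is shorter, but it takes two things for granted that you actually prove:
\begin{itemize}
\item that the integration by parts $\int((-\Delta+V)w)z\,d(t)\dxt=\int(\nabla_xw\cdot\nabla_xz+wzV)\,d(t)\dxt$ produces no boundary terms at infinity despite the degenerate weight, which you check on the explicit eigenfunctions via their decay;
\item that the closed form is still given by the gradient expression, i.e.\ that $\nabla_x$ is closed along approximating sequences. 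This is exactly your Step 2: the truncations $w^K$, the Cauchy property of $\nabla_x w^K$ in $L^2_d$, and the distributional identification of the limit using $\ess\inf_{B_R}V>0$ and $\ess\inf d>0$.
\end{itemize}
So your route is more elementary and self-contained. Because it works directly with the eigenbasis fixed in \ref{as:MV}, it also never has to identify the abstract self-adjoint realization with a quadratic form.

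Two small remarks. First, the time cut-off in Step 2 is unnecessary: Step 1 is an identity for each fixed $t$ and can be integrated against $d(t)$ for arbitrary coefficients $w_{n,l}\in L^2_d(\R)$. Second, the case $N=2$ that you flag does work out. There the factor $\bigl(\tfrac{2}{|x|^2+1}\bigr)^{(N-2)/2}$ is identically $1$ and $D\Pi^{-1}(x)=O(|x|^{-2})$. Hence $\varphi_{n,l}=O(1)$, $\nabla\varphi_{n,l}=O(|x|^{-2})$, and the boundary term over $\partial B_R$ is $O(R^{-1})$.
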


\begin{proof}
Let $D\left(\frac{1}{V}M\right) \subset \Hs$ be the domain of $\frac{1}{V}M$ and for $w,z \in D\left(\frac{1}{V}M\right) $ consider its associated form 
\begin{align*}
    b(w,z)&=\langle \tfrac{1}{V(x)}M(x,\nabla_x) w, z \rangle_{L^2_{Vd}(\R^N\times\R)}=\int_{\R^N\times\R}  (M w )z d(t) \dxt \\
   &= \int_{\R^N\times \R} ((-\Delta +V(x))w) z d(t) \dxt \\
    &=\int_{\R^N\times \R} (  \nabla_x w \cdot \nabla_x z +  w z V(x) ) d(t) \dxt .
\end{align*}
By $D(b) \subset \Hs$ we denote the domain of the form $b$.
Then it follows from Kato's second representation theorem, see Theorem 2.23 in Chapter 6, §2 in \cite{kato} that $D\left(\sqrt{\frac{1}{V}M}\right)=D(b)$ and 
\begin{align*}
    b(w,z)=\left\langle \sqrt{\tfrac{1}{V}M}w, \sqrt{\tfrac{1}{V}M}z  \right\rangle_{L^2_{Vd}(\R^N\times \R)}
\end{align*}
for all $w,z \in D\left(\sqrt{\tfrac{1}{V}M}\right) $.
Hence $\norm{\sqrt{\tfrac{1}{V}M}w}_{L^2_{Vd}}^2 < \infty$ implies in particular ${\nabla_x w \in L^2_d(\R^\N \times \R)}$.
\end{proof}

The following lemma transfers the regularity results from the linear equation \eqref{eq:lineq} to the semilinear wave equation \eqref{eq:main}.
\begin{lemma}\label{lem:nonlin}
    Let $u \in \Hs$ and $p \in \left(1, 1+\frac{1}{2 \alpha}\right)$. Then $|u|^{p-1}u \in L^2_{Vd}(\R^N \times \R)$. 
\end{lemma}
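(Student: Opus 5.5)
The claim reduces to the embedding of Theorem \ref{theom:emb}. The plan is to compute the $L^2_{Vd}$-norm of $|u|^{p-1}u$ directly and to identify it with an $L^{2p}_{Vd}$-norm of $u$:
\begin{align*}
    \norm{|u|^{p-1}u}_{L^2_{Vd}(\R^N\times\R)}^2=\int_{\R^N\times\R}|u|^{2p}V(x)d(t)\dxt=\norm{u}_{L^{2p}_{Vd}(\R^N\times\R)}^{2p}.
\end{align*}
It therefore suffices to show $u\in L^{2p}_{Vd}(\R^N\times\R)$.

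The key step is to check that the exponent $q=2p$ lies in the admissible range $\left[2,2+\frac1\alpha\right)$ of Theorem \ref{theom:emb}. Since $p>1$, we have $2p>2$. Since $p<1+\frac{1}{2\alpha}$, we have $2p<2+\frac1\alpha$. (When $\alpha=0$ the upper bound is read as $+\infty$, consistent with the convention in Theorem \ref{thm:main}, and any $q\geq 2$ is allowed.)

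Theorem \ref{theom:emb} then gives the bounded embedding $\Hs\hookrightarrow L^{2p}_{Vd}(\R^N\times\R)$, so
\begin{align*}
    \norm{|u|^{p-1}u}_{L^2_{Vd}}\lesssim\norm{u}_{\Hs}^{p}<\infty .
\end{align*}

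If the weight $\tilde V\tilde d$ is needed later, e.g.\ to place the right-hand side of \eqref{eq:main} in the form $f V d$ of \eqref{eq:lineq}, one uses \ref{as:vgen} ($\tilde V\lesssim V$) together with $\tilde d\in L^\infty$ and $\ess\inf d>0$ from \ref{as:dgen}. This gives $\tilde d/d\in L^\infty$ and $\tilde V/V\in L^\infty$, so $f=\frac{\tilde V\tilde d}{Vd}|u|^{p-1}u$ is also in $L^2_{Vd}$.

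I expect no real obstacle here. The only point needing care is matching the restriction on $p$ exactly to the range of the embedding theorem.
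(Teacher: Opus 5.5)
Your proposal is correct and is exactly the paper's argument: the paper's proof only says the lemma ``follows directly from Theorem \ref{theom:emb}.'' You have made that explicit through the identity $\norm{|u|^{p-1}u}_{L^2_{Vd}}^2=\norm{u}_{L^{2p}_{Vd}}^{2p}$ and the check that $p\in\left(1,1+\frac{1}{2\alpha}\right)$ gives $2p\in\left(2,2+\frac{1}{\alpha}\right)$.
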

\begin{proof}
    Follows directly from Theorem \ref{theom:emb}. 
\end{proof}
\begin{proof}[Proof of Theorem \ref{thm:main}]
By Theorem \ref{thm:groundstate}, there exists a critical point $u \in \mathcal{H}$ of $J$ and by Theorem~\ref{theom:emb} we have $u \in L^{q}_{Vd}(\R^N \times \R)$ for all $q \in \left[2,2+\frac{1}{\alpha}\right)$.
Further, by our assumptions on $d, \tilde{d}$ and $V, \tilde{V}$ we can write 
\begin{align*}
|u|^{p-1}u \tilde{V}(x)\tilde{d}(t)=|u|^{p-1} u \frac{\tilde{V}(x)\tilde{d}(t)}{V(x)d(t)}V(x)d(t)
\end{align*}
with $\frac{\tilde{d}}{d} \frac{\tilde{V}}{V} \in L^\infty(\R^N \times \R)$. From Lemma \ref{lem:nonlin} we conclude $f \coloneqq |u|^{p-1}u \frac{\tilde{V}\tilde{d}}{Vd}\in L^2_{Vd}(\R^N \times \R)$ and since $u$ is a solution to \eqref{eq:lineq} for such $f$, we infer from Lemma \ref{lem:reglin} that $\partial_t u \in L^2_V(\R^N \times \R)$ and $\sqrt{\frac{1}{V}M}u \in L^2_{Vd}(\R^N \times \R)$.    
\end{proof}
\begin{proof}[Proof of Theorem \ref{them:reg}]
By Theorem \ref{thm:main}, our solution $u \in \mathcal{H}$ satisfies ${\partial_t u \in L^2_V(\R^N \times \R)}$ and $\sqrt{\frac{1}{V}M}u \in L^2_{Vd}(\R^N \times \R)$. By Lemma \ref{lem:betteremb} and \ref{lem:regqho} we obtain the result for Example \ref{ex:qho} and by Lemma \ref{lem:regwl} we obtain the result for Example \ref{ex:wl}.
\end{proof}

    \appendix
    \section{Details on the examples}\label{sec:apex}
In this section we present details on the examples of $M$, $V$ and $d$ introduced in the beginning of this paper and verify that they satisfy our assumptions. 

First we study the eigenvalue problem
\begin{align*}\label{eq:evpx}
    M\left(x, \nabla_x\right) \varphi = \nu V\left(x\right) \varphi , \quad x \in \R^N 
\end{align*}
and begin with Example \ref{ex:qho}, i.e.
\begin{equation}\label{eq:exev}
   \left(- \dxsquare +x^2\right)^2\varphi\left(x\right)=\nu \varphi\left(x\right), \quad x \in \R.
\end{equation}
By Chapter 10 in \cite{htf}, an $L^2$ orthonormal basis of eigenfunctions of \eqref{eq:exev} is given by $\varphi_n\left(x\right)= \frac{1}{\sqrt[4]{\pi}} \frac{1}{\sqrt{2^n n!}} \ee^{-\frac{1}{2}x^2}H_n\left(x\right)$, where $H_n$ are the Hermite polynomials, with eigenvalues $\nu_n=\left(2n+1\right)^2 $, $n \in \N_0$.
The eigenfunctions $\left(\varphi_n\right)_{n \in \No}$ form a complete orthonormal system of $L^2\left(\R\right)$. This verifies conditions \ref{as:first} and \ref{as:evM}.
It is furthermore known (Chapter 10 in \cite{htf}, Cramér's inequality) that $\sup_{x \in \R} |\varphi_n\left(x\right)| \leq  1 $ for all $n \in \No$. This shows that \ref{as:efestimateM} holds for $\alpha=0$.

The proof in the case of Example \ref{ex:wl}, in particular the $L^\infty$ bounds on the eigenfunctions, is involved. We therefore divide the proof in several parts.

We consider the eigenvalue problem  
\begin{align}\label{eq:evpN}
    - \Delta \varphi = \lambda \frac{1}{\left(1+|x|^2\right)^2} \varphi , \quad x \in \R^N
\end{align}
for $N \geq 2$.
The formulas for the eigenvalues, eigenfunctions and dimension of the eigenspaces to \eqref{eq:evpN} are derived in \cite{ab} and will be recalled below. 
Since the $L^\infty$ bounds on the eigenfunctions as required in assumption \ref{as:efestimateM} are not provided in \cite{ab}, we state and prove them below, see Theorem \ref{thm:efbound}.
First we present the formula for the eigenvalues and eigenfunctions and therefore introduce some notation as in \cite{ab}.
Let $\S^N\coloneqq\left\{ x \in \R^{N+1}: |x|=1 \right\}$, $M_0\coloneqq\left(0,\ldots,0,1\right) \in \S^N$ and $\S^N_\star\coloneqq\S^N \setminus \{M_0\}$. 
Furthermore we need the stereographic projection
\begin{align*}
    \Pi: \S^N_\star \to \R^N, \quad \xi \mapsto \left( \frac{\xi_1}{1-\xi_{N+1}},\ldots, \frac{\xi_N}{1-\xi_{N+1}} \right)
\end{align*}
with its inverse 
\begin{align*}
    \Pi^{-1}: \R^N \to \S^N_\star, \quad x \mapsto \left( \frac{2 x_1}{|x|^2+1},\ldots,\frac{2 x_N}{|x|^2+1},\frac{|x|^2-1}{|x|^2+1} \right) .
\end{align*}

By Theorem 3.2 in \cite{ab} the eigenvalues and eigenfunctions to \eqref{eq:evpN} are given by 
\begin{align*}
  \lambda_n^N=\left(2n+N-1\right)^2-1, \quad   \varphi_{n,l}^N\left(x\right)=\left(\frac{2}{|x|^2+1}\right)^{\frac{N-2}{2}}\mathcal{Y}_{n,l}^N\left(\Pi^{-1}\left(x\right)\right) 
\end{align*}
with $ n\in \No, 1 \leq l \leq d_n^N$ where $d_0^N=1$, $d_1^N=N+1$ and 
\begin{align}\label{eq:dimen}
    d_n^N=\binom{N+n}{N}- \binom{N+n-2}{N} .
\end{align}
Here, $\mathcal{Y}_{n,l}^N$ are the spherical harmonics of degree $n$ on $\S^N$ and for each $n$, there are $d_n^N$ such functions. In particular the eigenspace of $\lambda_n^N$ has dimension $d_n^N$ for each $n$.
Moreover, $\{\varphi_{n,l}^N\}_{n,l}$ form a complete orthonormal system of $L^2\left(\R^N, \frac{1}{\left(1+|x|^2\right)^2 }\mathrm{d}x\right)$.

In order to determine the value of $\alpha$ in assumption \ref{as:efestimateM} we need an estimate on $d_n^N$ and $L^\infty$ bounds on the eigenfunctions.
\begin{lemma}
    Let $N \geq 2$. Then there exists a constant $c_N >0$ such that $d_n^N \leq c_N n^{N-1}$ for all $n \in \N$.
\end{lemma}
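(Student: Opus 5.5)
The plan is to work directly from the explicit formula \eqref{eq:dimen}:
\begin{align*}
d_n^N=\binom{N+n}{N}-\binom{N+n-2}{N}.
\end{align*}
We show that this difference is a polynomial in $n$ of degree $N-1$ and then bound it by a multiple of $n^{N-1}$ for all $n\geq 1$.

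First, we telescope with Pascal's rule. We have $\binom{N+n}{N}=\binom{N+n-1}{N}+\binom{N+n-1}{N-1}$ and $\binom{N+n-1}{N}=\binom{N+n-2}{N}+\binom{N+n-2}{N-1}$. Hence
\begin{align*}
d_n^N=\binom{N+n-1}{N-1}+\binom{N+n-2}{N-1}\leq 2\binom{N+n-1}{N-1}.
\end{align*}
This identity holds for $n\geq 2$. For $n=1$ it gives $N+1$, which agrees with the stated value $d_1^N=N+1$, since $\binom{N-1}{N}=0$ for the convention with $n=1$.

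Next, we bound the remaining binomial coefficient:
\begin{align*}
\binom{N+n-1}{N-1}=\prod_{k=1}^{N-1}\frac{n+k}{k}\leq \prod_{k=1}^{N-1}(n+k)\leq \bigl((N)n\bigr)^{N-1}.
\end{align*}
The last step uses $n+k\leq n+Nn$, or more sharply $n+k\leq Nn$ since $k\leq N-1$ and $n\geq1$. Thus we may take $c_N=2N^{N-1}$.

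There is no real obstacle here. The only point requiring care is to check that the small cases $n=1$ (and $n=0$, which is excluded anyway since $n\in\N$) are consistent with the convention used in \eqref{eq:dimen}. This is handled by the explicit value $d_1^N=N+1\leq c_N$.
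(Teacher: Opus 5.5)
Your proof is correct, but it argues differently from the paper.

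The paper writes $P(n)=\binom{N+n}{N}=\frac{1}{N!}(n+1)\cdots(n+N)$ as a degree-$N$ polynomial. It observes that the degree-$N$ terms cancel in $d_n^N=P(n)-P(n-2)$ and concludes that $d_n^N$ is a polynomial of degree $N-1$. The final step, from this to $d_n^N\le c_N n^{N-1}$ for $n\ge 1$ (for instance via $|\sum_k a_k n^k|\le(\sum_k|a_k|)\,n^{N-1}$), is left implicit, and the constant is not explicit.

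You instead apply Pascal's rule twice to get the closed form $d_n^N=\binom{N+n-1}{N-1}+\binom{N+n-2}{N-1}$. You then estimate $\binom{N+n-1}{N-1}=\prod_{k=1}^{N-1}\frac{n+k}{k}\le (Nn)^{N-1}$. This gives the explicit constant $c_N=2N^{N-1}$, and it handles $n=1$ by direct check rather than by evaluating the polynomial $P$ at $n-2=-1$. The paper's observation that the degree is exactly $N-1$ also shows the exponent cannot be lowered, which yours does not, but the lemma does not need this.

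Two small remarks:
\begin{itemize}
\item Your opening sentence announces that you will show $d_n^N$ is a polynomial of degree $N-1$. Your argument never proves or uses this, so that sentence should go.
\item The step $\binom{N+n-2}{N-1}\le\binom{N+n-1}{N-1}$, needed for $d_n^N\le 2\binom{N+n-1}{N-1}$, deserves its one-line justification. One more application of Pascal's rule, or monotonicity in the upper index, suffices.
\end{itemize}
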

\begin{proof}
    Let $N\geq 2$ be fixed. For $n \in \N$ we define
    \begin{align*}
        P\left(n\right)&\coloneqq \binom{N+n}{N}= \frac{\left(N+n\right)!}{N! n!}= \frac{1}{N!} \left(n+1\right) \cdot \ldots \cdot \left(n+N\right)\\
        &= \frac{1}{N!}\left( n^N+p_{N-1}n^{N-1}+ \ldots + p_1 n + p_0 \right)
    \end{align*}
    which is a polynomial in $n$ of degree $N$ with coefficients $1,p_{N-1},\ldots,p_0 \in \R$.
    By \eqref{eq:dimen} we have 
    \begin{align*}
        d_n^N&=P\left(n\right)-P\left(n-2\right)\\
        &= \frac{1}{N!} \left(n^N -\left(n-2\right)^N + p_{N-1}\left(n^{N-1}-\left(n-2\right)^{N-1}\right)+ \ldots + p_1\left(n -\left(n-2\right)\right)\right)      
    \end{align*}
    which is a polynomial in $n$ of degree $N-1$.
\end{proof}

The following result presents $L^\infty$ bounds on the eigenfunctions.
\begin{theorem}\label{thm:efbound}
Let $N \geq 2$ be even. 
Then there exists a constant $c=c(N)>0$ such that
\begin{align*}
     \sup_{x \in \R^N}|\varphi_{n,l}^N\left(x\right)| \leq c n^{\frac{N^2-2}{4}}
\end{align*}
for all $n \in \No$ and $1 \leq l \leq d_n^N$.
\end{theorem}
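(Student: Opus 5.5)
We want $\sup_x|\varphi^N_{n,l}(x)|\lesssim n^{(N^2-2)/4}$. The plan is to pass to the sphere, bound the spherical harmonics there, and control the conformal prefactor. Since $\frac{2}{|x|^2+1}\le 2$ and $N\ge 2$, the factor $\left(\frac{2}{|x|^2+1}\right)^{\frac{N-2}{2}}$ is bounded by $2^{\frac{N-2}{2}}$. Hence
\begin{align*}
\sup_{x\in\R^N}|\varphi^N_{n,l}(x)|\le 2^{\frac{N-2}{2}}\sup_{\xi\in\S^N}|\mathcal{Y}^N_{n,l}(\xi)|,
\end{align*}
and it suffices to bound $\mathcal{Y}^N_{n,l}$ uniformly on the sphere.

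The key point is normalization. The $\varphi^N_{n,l}$ are orthonormal in $L^2(\R^N,(1+|x|^2)^{-2}\dx)$, so I first pull this back to $\S^N$. The surface measure transforms as $\mathrm{d}\sigma(\xi)=\left(\frac{2}{1+|x|^2}\right)^N\dx$. We then have
\begin{align*}
\int_{\R^N}|\varphi^N_{n,l}|^2\frac{\dx}{(1+|x|^2)^2}=\int_{\S^N}|\mathcal{Y}^N_{n,l}(\xi)|^2\, w(\xi)\,\mathrm{d}\sigma(\xi),
\end{align*}
with $w(\xi)=2^{-N}\left(\tfrac{2}{1+|x|^2}\right)^{N-2-N+2}\cdot\tfrac{1}{4}\cdot\ldots$. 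Expressed through $1-\xi_{N+1}=\frac{2}{1+|x|^2}$, this becomes $w(\xi)=c_N(1-\xi_{N+1})^{\beta}$ for an explicit $\beta$, which I compute to be $\beta=0$ up to constants. If $\beta=0$, the $\mathcal{Y}^N_{n,l}$ are simply $L^2(\S^N)$-normalized spherical harmonics (up to a fixed constant). If instead the weight is genuinely $(1-\xi_{N+1})^\beta$, I would handle it by the same argument below, using that $\mathcal{Y}^N_{n,l}$ is a polynomial of degree $n$ so the weighted and unweighted $L^2$ norms are comparable up to a polynomial factor in $n$ (Remez/Nikolskii-type).

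For $L^2(\S^N)$-normalized spherical harmonics of degree $n$, the addition theorem gives
\begin{align*}
\sum_{l=1}^{d_n^N}|\mathcal{Y}^N_{n,l}(\xi)|^2=\frac{d_n^N}{|\S^N|}
\end{align*}
for every $\xi$. Hence $\sup|\mathcal{Y}^N_{n,l}|\le \sqrt{d_n^N/|\S^N|}\lesssim n^{\frac{N-1}{2}}$ by the preceding lemma. This is already much better than $n^{(N^2-2)/4}$, since $\frac{N-1}{2}\le\frac{N^2-2}{4}$ for $N\ge 2$, so the claim follows with room to spare. The stated exponent suggests the authors use a cruder route. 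For instance, they may bound $\|\mathcal{Y}\|_\infty\lesssim n^{N/2}\|\mathcal{Y}\|_{L^2}$ via Nikolskii, or lose powers in the weight comparison, or use evenness of $N$ to write explicit Gegenbauer/Jacobi-type formulas. Either way, any bound of the form $n^{\gamma}$ with $\gamma\le\frac{N^2-2}{4}$ suffices.

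The main obstacle is the Jacobian and weight bookkeeping in the change of variables, including verifying which power of $(1-\xi_{N+1})$ survives. If a negative power survives, the weight is singular at $M_0$. In that case I would split the sphere into a cap near $M_0$ and its complement. Away from the cap the weight is comparable to $1$. On the cap I would use that polynomials of degree $n$ on $\S^N$ satisfy $\sup_{\text{cap}}|\mathcal{Y}|\lesssim n^{c}\sup_{\text{complement}}|\mathcal{Y}|$ for caps of radius $\sim 1/n$, via Bernstein's inequality $\|\nabla \mathcal{Y}\|_\infty\le n\|\mathcal{Y}\|_\infty$. This accounts for a loss of extra powers of $n$, consistent with the exponent in the statement. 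The case $n=0$ is trivial since $\mathcal{Y}^N_{0,1}$ is constant.
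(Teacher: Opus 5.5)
Your argument is correct, and it follows a genuinely different and stronger route than the paper. The paper does not use the addition theorem. It writes $\mathcal{Y}_{l_1,\dots,l_N}=\frac{1}{\sqrt{2\pi}}\ee^{\ii l_1\theta_1}\prod_{i=2}^N {_{i}}{\overline{P}}_{l_i}^{l_{i-1}}(\theta_i)$ using Higuchi's product formula. It then bounds each normalized Legendre factor separately in Theorem \ref{lem:boundsh}, via Lohöfer's inequality, recurrence relations and an induction on $i$ in steps of two. This gives $L^{\frac{i-1}{2}}$ for even $i$ and $L^{\frac{i}{2}}$ for odd $i$, and the paper multiplies these suprema; summing the exponents over $i=2,\dots,N$ gives $\frac{N^2-2}{4}$. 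The loss comes from taking the supremum of each factor independently.

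Your reproducing-kernel argument only needs that $\mathcal{Y}^N_{n,l}$ is a fixed multiple of an $L^2(\S^N)$-normalized spherical harmonic of degree $n$. It then gives $\abs{Y(\xi)}\le\norm{Y}_{L^2(\S^N)}\sqrt{d^N_n/\abs{\S^N}}\lesssim n^{\frac{N-1}{2}}$. This bound is optimal (zonal harmonics attain it), holds for all $N\ge 2$ rather than only even $N$, does not depend on the choice of (complex) basis, and avoids the whole Legendre-function appendix. As a by-product, \ref{as:efestimateM} would hold with $\alpha=N-1$ instead of $\frac{N^2+2N-4}{2}$, which enlarges the admissible range $p<1+\frac{1}{2\alpha}$ in Theorem \ref{thm:main}. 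The paper's route does give explicit bounds on the individual factors ${_{i}}{\overline{P}}_{L}^{K}$, but the statement does not need them.

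Two things to tidy up. First, your weight computation is garbled, but its conclusion is exactly right and should just be written out. Since $\mathrm{d}\sigma=\left(\frac{2}{1+|x|^2}\right)^N\mathrm{d}x$,
\begin{align*}
\abs{\varphi^N_{n,l}(x)}^2\frac{\mathrm{d}x}{(1+|x|^2)^2}=\frac{2^{N-2}\abs{\mathcal{Y}^N_{n,l}(\Pi^{-1}(x))}^2}{(1+|x|^2)^{N}}\,\mathrm{d}x=\frac14\abs{\mathcal{Y}^N_{n,l}(\xi)}^2\,\mathrm{d}\sigma(\xi),
\end{align*}
so the weight is the constant $\frac14$ and $\beta=0$ exactly. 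Drop your contingency branches (Remez/Nikolskii, caps of radius $1/n$, Bernstein); as written they are too vague to count as proofs anyway.

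Second, the case $n=0$ is not ``trivial'' for the literal statement. The right-hand side $c\,n^{\frac{N^2-2}{4}}$ vanishes at $n=0$, while $\varphi^N_{0,1}\neq0$. The statement, the paper's included, has to be read with $n+1$ in place of $n$, or for $n\ge 1$. With that reading, your bound covers $n=0$ immediately.
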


We present the proof of Theorem \ref{thm:efbound} after Theorem \ref{lem:boundsh} below.

\textbf{Change of variables:}
In order to prove the $L^\infty$ bounds it is more convenient to express the eigenfunctions in spherical coordinates: For $\xi \in \S^N$ and $\theta_1 \in [0,2\pi]$, $\theta_i \in [0,\pi]$ for all $i=2, \ldots,N$ we write
\begin{align*}
\xi=
\begin{pmatrix}
    \cos\left(\theta_1\right)& \sin\left(\theta_2\right)& \sin\left(\theta_3\right)& \ldots& \sin\left(\theta_N\right) \\
    \sin\left(\theta_1\right) &\sin\left(\theta_2\right) &\sin\left(\theta_3\right)& \ldots &\sin\left(\theta_N\right) \\
                    &\cos\left(\theta_2\right)& \sin\left(\theta_3\right) &\ldots& \sin\left(\theta_N\right) \\
                   &               & \cos\left(\theta_3\right)& \ldots& \sin\left(\theta_N\right) \\
                    &             &                &        &           \vdots     \\
                     &             &              & \cos\left(\theta_{N-1}\right)&\sin\left(\theta_N\right)\\
                                 &&&&                                  \cos\left(\theta_N\right)
\end{pmatrix}                                                   
\end{align*}
with $\frac{2}{|x|^2+1}=1-\cos\left(\theta_N\right)$.
Additionally, we change the enumeration of the eigenfunctions: Instead of using the index $l$ with $0 \leq l \leq d_n^N$ we use the $N$-tuple $\left(l_1,l_2,\ldots, l_N\right) \in \Z^N$ with $0 \leq |l_1| \leq l_2 \leq \ldots \leq l_N \coloneqq n$.
In the following, instead of using the notation $\varphi_{n,l}^N$, we will use
\begin{align*}
   \varphi_{l_1, \ldots, l_N}\left(\theta_1,\ldots, \theta_N\right)= \left(1-\cos\left(\theta_N\right)\right)^{\frac{N-2}{2}} \mathcal{Y}_{l_1, \ldots, l_N}\left(\theta_1,\ldots, \theta_N\right) .
\end{align*}
By \cite{higuchi}, the spherical harmonics $\mathcal{Y}_{l_1, \ldots, l_N}$ are given by
\begin{align*}
    \mathcal{Y}_{l_1, \ldots , l_N}\left(\theta_1, \ldots, \theta_N\right)= \frac{1}{\sqrt{2 \pi}} \ee^{\ii l_1 \theta_1} \prod_{i=2}^N {_{i}}{\overline{P}}_{l_i}^{l_{i-1}}\left(\theta_i\right) 
\end{align*}
with 
\begin{align*}
    {_{i}}{\overline{P}}_{L}^{K}\left(\theta\right)= \sqrt{\frac{2L+i-1}{2} \frac{\left(L+K+i-2\right)!}{\left(L-K\right)!}} \left(\sin\left(\theta\right)\right)^{\frac{2-i}{2}} P_{L+\frac{i-2}{2}}^{-\left(K+\frac{i-2}{2}\right)}\left(\cos\left(\theta\right)\right) 
\end{align*}
for $0 \leq K \leq L$ where $P_\xi^\mu$ are the \emph{Legendre functions} of order $\mu \in \R$ and degree $\xi \in \R$. As a function of $\theta$, $y\left(\theta\right)=P_\xi^{\mu}\left(\theta\right)$ solves the \emph{Legendre equation}
\begin{align}\label{eq:legeq}
    \frac{\der^2}{\der \theta^2} y \left(\theta\right)+ \cot\left(\theta\right) \frac{\der}{\der \theta} y\left(\theta\right) + \left( \xi\left(\xi+1\right)- \frac{\mu^2}{\sin^2\left(\theta\right)}\right)y\left(\theta\right)=0 .
\end{align}
For the special case of order $0$ and integer degree $n$, $P^0_n$ are called \emph{Legendre polynomials}.
With this notation in hand we can now state the central result which is used for the proof of Theorem \ref{thm:efbound}.
\begin{theorem}\label{lem:boundsh}
Let $N \geq 2$. Then there exists a constant $c=c(N)>0$ such that for all $2 \leq i \leq N$ and for all $0 \leq K \leq L$ we have 
    \begin{align}\label{eq:legbound}
        \sup_{\theta \in [0,\pi]}|{_{i}}{\overline{P}}_{L}^{K}\left(\theta\right) | \leq c \begin{cases}
            L^{\frac{i-1}{2}} , & \text{ if } i \text{ is even,}\\
            L^\frac{i}{2}, & \text{ if } i \text{ is odd.}
        \end{cases} 
    \end{align}
\end{theorem}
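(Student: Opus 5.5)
The plan is to reduce all $i$ to a single representation in terms of Gegenbauer polynomials. I then bound these in two regimes: away from the poles, using the classical weighted bound, and near the poles, using the fact that the extra negative power of $\sin\theta$ is compensated by the zero of order $K$ of the function there.

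\emph{Step 1 (Gegenbauer form).} Write $\lambda\coloneqq K+\tfrac{i-1}{2}$ and $n\coloneqq L-K$. By the standard identity relating Legendre functions of order $-(K+\tfrac{i-2}{2})$ and degree $L+\tfrac{i-2}{2}$ to Gegenbauer polynomials (Chapter 10 in \cite{htf}), we have
\begin{align*}
{_{i}}{\overline{P}}_{L}^{K}(\theta)=\gamma_{i,L,K}\,(\sin\theta)^{K}\,C_{n}^{\lambda}(\cos\theta),
\end{align*}
with an explicit constant $\gamma_{i,L,K}$ consisting of factorials and Gamma functions. I will compute $\gamma_{i,L,K}$ from the normalization. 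By construction, ${_{i}}{\overline{P}}_{L}^{K}$ is normalized in $L^2((0,\pi),(\sin\theta)^{i-1}\der\theta)$, because the $\mathcal{Y}_{l_1,\ldots,l_N}$ are orthonormal. This turns the statement into a weighted sup bound for orthonormal Gegenbauer functions: with $g(\theta)\coloneqq {_{i}}{\overline{P}}_{L}^{K}(\theta)(\sin\theta)^{\frac{i-1}{2}}$, the function $g$ is an $L^2(0,\pi)$-normalized solution of a Schrödinger-type equation obtained from \eqref{eq:legeq}, namely
\begin{align*}
g''+\Big(\big(L+\tfrac{i-1}{2}\big)^2-\frac{(K+\frac{i-2}{2})^2-\frac14}{\sin^2\theta}\Big)g=0 .
\end{align*}

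\emph{Step 2 (interior bound).} For this normalized $g$, I will use the classical uniform bound for orthonormal Gegenbauer functions with weight $(\sin\theta)^{\lambda}$, in the form $\sup_\theta |g(\theta)|\le c$ uniformly in $K\le L$ (Szegő / Förster-type inequality, or a Sonin-function monotonicity argument applied to the equation above). This gives $|{_{i}}{\overline{P}}_{L}^{K}(\theta)|\le c(\sin\theta)^{-\frac{i-1}{2}}$, which is already better than \eqref{eq:legbound} on $\{\sin\theta\ge 1/L\}$ up to the correct power: on that set $(\sin\theta)^{-\frac{i-1}{2}}\le L^{\frac{i-1}{2}}$.

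\emph{Step 3 (polar caps, the main obstacle).} On $\{\sin\theta<1/L\}$ the interior bound degenerates, and I need a bound uniform in $K$. I will use the representation from Step 1 together with $|C_n^\lambda(\cos\theta)|\le C_n^\lambda(1)=\binom{n+2\lambda-1}{n}$ and the explicit value of $\gamma_{i,L,K}$.
\begin{itemize}
\item For $K=0$, Stirling's formula gives $\gamma_{i,L,0}\,C_L^{\frac{i-1}{2}}(1)\sim L^{\frac{i-1}{2}}\cdot$(power from the normalization), and I will verify that this is $O(L^{\frac{i-1}{2}})$.
\item For $K\ge1$, the factor $(\sin\theta)^K\le L^{-K}$ must beat the growth of $\gamma C_n^\lambda(1)$ in $K$. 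I will estimate the ratio of Gamma functions with Stirling's formula uniformly in $K\le L$.
\end{itemize}
The parity of $i$ enters here. For even $i$, the parameters $\lambda-\tfrac12$ are integers, the Gamma quotients are exact factorial ratios, and the estimate closes with $L^{\frac{i-1}{2}}$. For odd $i$, half-integer Gamma quotients appear. Rather than fight for the sharp exponent, I plan to bound these crudely, e.g. by $\Gamma(x+\tfrac12)\le \Gamma(x+1)$ monotonicity bounds, which lose a factor $L^{1/2}$ and give $L^{\frac{i}{2}}$. This matches the weaker statement for odd $i$.

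I expect Step 3 to be the hard part, specifically keeping constants independent of $K$ in the Stirling estimates near the poles. If that fails, the fallback is a Bernstein-inequality argument. It would write $(\sin\theta)^{-m}P_l^{k}$ as $(1-x^2)^{(k-m)/2}$ times a derivative of $P_l$, and pay one factor $L$ per removed power of $\sin\theta$, relative to the classical bound $\sqrt{(2l+1)/2}$ for normalized associated Legendre functions. This gives exactly $L^{\frac12+\frac{i-2}{2}}=L^{\frac{i-1}{2}}$ in the even case.
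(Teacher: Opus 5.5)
\noindent Your Step~2 is false as stated, and the entire region off the polar caps rests on it. There is no bound $\sup_\theta|g|\le c$ that is uniform in $0\le K\le L$. Take $K=L$. Then $C_0^\lambda\equiv 1$ (equivalently \eqref{eq:ximinusxi}), so ${}_{i}\overline{P}_{L}^{L}=\gamma_{i,L,L}\,(\sin\theta)^L$ with $\gamma_{i,L,L}^{-2}=\int_0^\pi(\sin\theta)^{2L+i-1}\,\mathrm{d}\theta\sim\sqrt{\pi/L}$. Hence $g(\pi/2)=\gamma_{i,L,L}\sim(L/\pi)^{1/4}\to\infty$. This is not an endpoint artefact. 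For $K=\kappa L$ with $0<\kappa<1$, the Airy behaviour of $g$ at the turning point $\sin\theta\approx\kappa$ makes $\sup|g|$ of order $L^{1/6}$. Szeg\H{o}'s inequality only covers $0<\lambda<1$. For large $\lambda=K+\tfrac{i-1}{2}$ the available uniform estimates necessarily grow with $K$; for instance the Erd\'elyi--Magnus--Nevai inequality gives $g^2\lesssim 1+K$ here. With such a bound, your split at $\sin\theta=1/L$ only yields $|{}_{i}\overline{P}_{L}^{K}|\lesssim(1+K)^{1/2}L^{\frac{i-1}{2}}$ off the caps. That still gives the weaker odd-$i$ bound $L^{\frac i2}$, but not the even case of \eqref{eq:legbound}.

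\noindent The missing idea is to split at the turning point rather than at $1/L$: use the cap $\{\sin\theta\le\epsilon(K+1)/L\}$ with, say, $\epsilon=1/(2e)$. Your Step~3 estimate survives this enlargement. One has $\gamma_{i,L,K}^2C_n^\lambda(1)^2=\frac{(L+\frac{i-1}{2})\Gamma(L+K+i-1)}{2^{2K+i-2}(L-K)!\,\Gamma(K+\frac i2)^2}$ and $\Gamma(L+K+i-1)/(L-K)!\le(2L+i)^{2K+i-2}$. So on the cap $|{}_{i}\overline{P}_{L}^{K}|^2\lesssim L^{i-1}\bigl(\epsilon(K+1)(1+\tfrac{i}{2L})\bigr)^{2K}/(K!)^2\lesssim L^{i-1}$. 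Off the cap, $|{}_{i}\overline{P}_{L}^{K}|\lesssim(1+K)^{1/2}\bigl(L/(\epsilon(K+1))\bigr)^{\frac{i-1}{2}}\lesssim L^{\frac{i-1}{2}}$, because $i\ge 2$.

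\noindent Repaired this way, your route is genuinely different from the paper's, which never uses Gegenbauer asymptotics. The paper inducts $i\to i+2$ via \eqref{eq:rec}, writing ${}_{i+2}\overline{P}_{L}^{K}$ as a combination of ${}_{i}\overline{P}_{L+2}^{K}$ and ${}_{i}\overline{P}_{L+2}^{K+2}$ with $O(L)$ coefficients. Its base cases come from Loh\"ofer's inequality ($i=2,4$) and a Sobolev--Poincar\'e estimate ($i=3$). Your version would give the sharp $L^{\frac{i-1}{2}}$ for every $i$. The paper loses $L^{1/2}$ in the $i=3$ base case, and that loss is the real source of its parity split. So the parity discussion in your Step~3 is unnecessary: the Gamma quotients close for both parities. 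Your fallback is essentially the paper's mechanism. As sketched, though, it needs a weighted Bernstein inequality with constants uniform in the order, and that is exactly what \eqref{eq:rec} together with Loh\"ofer's bound supplies.
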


\begin{proof}[Proof of Theorem \ref{thm:efbound}]
Using the formula for the eigenfunctions and the estimates from Theorem \ref{lem:boundsh} we conclude Theorem \ref{thm:efbound}.
\end{proof}
We prove Theorem \ref{lem:boundsh} after presenting some properties of the Legendre functions and proving \eqref{eq:legbound} for the values $i=2,3,4$.

\textbf{Properties of Legendre functions:}
In the following lemma we collect some properties of the Legendre functions $P_\xi^\mu$ such as recurrence formulas and formulas for special values of $\xi$ and $\mu$ which can be found in \cite{nist}, Chapter 14.
\begin{lemma}\label{lem:rec}
For $\theta \in \left(0,\pi\right)$, $\xi,\mu \in \R$ we have
    \begin{align}\label{eq:rekrek}
        P_\xi^{\mu+2}\left(\cos\left(\theta\right)\right)+2\left(\mu+1\right) \cot\left(\theta\right) P_\xi^{\mu+1}\left(\cos\left(\theta\right)\right)+\left(\xi-\mu\right)\left(\xi+\mu+1\right) P_\xi^\mu\left(\cos\left(\theta\right)\right)=0,
    \end{align}
    \begin{align}\label{eq:recc}
        \sin\left(\theta\right)P_\xi^{\mu+1}\left(\cos\left(\theta\right)\right)-\left(\xi-\mu+1\right)P_{\xi+1}^\mu\left(\cos\left(\theta\right)\right)+\left(\xi+\mu+1\right)\cos\left(\theta\right)P_\xi^\mu\left(\cos\left(\theta\right)\right)=0,
    \end{align}
    \begin{align}\label{eq:reccc}
        \left(2 \xi +1\right) \sin\left(\theta\right) P_\xi^\mu\left( \cos\left( \theta \right)\right)+ P_{\xi+1}^{\mu+1}\left( \cos\left( \theta \right) \right)-P_{\xi-1}^{\mu+1}\left( \cos\left( \theta \right) \right)=0 ,
    \end{align}
    \begin{align}\label{eq:rec}
        \frac{2 \mu}{\sin\left(\theta\right)}P_\xi^\mu\left(\cos\left(\theta\right)\right)+P_{\xi+1}^{\mu+1}\left(\cos\left(\theta\right)\right)+\left(\xi-\mu+1\right)\left(\xi-\mu+2\right)P_{\xi+1}^{\mu-1}\left(\cos\left(\theta\right)\right)=0,
    \end{align}
    \begin{align}\label{eq:rekb}
        - \frac{\mathrm{d}}{\mathrm{d}\theta} \left( P_\xi^\mu \left(\cos\left(\theta\right)\right)\right)=\frac{1}{2}\left(\left(\xi+\mu\right)\left(\xi-\mu+1\right)P_\xi^{\mu-1}\left(\cos\left(\theta\right)\right)-P_\xi^{\mu+1}\left(\cos\left(\theta\right)\right)\right)
    \end{align}
    \begin{align}\label{eq:rekc}
        \sin\left(\theta\right) \frac{\der}{\der \theta}\left(P_\xi^\mu\left(\cos\left(\theta\right)\right)\right)=\sin\left(\theta\right) P_{\xi}^{\mu+1}\left(\cos\left(\theta\right)\right)+ \mu \cos\left(\theta\right) P_{\xi}^{\mu}\left(\cos\left(\theta\right)\right)
    \end{align}
    \begin{align}\label{eq:mueinhalb}
      P_\xi^{-\frac{1}{2}}\left(\cos\left(\theta\right)\right)=\left( \frac{2}{\pi \sin\left(\theta\right)}\right)^{\frac{1}{2}} \frac{1}{\xi+\frac{1}{2}}\sin\left(\left(\xi+\frac{1}{2}\right)\theta\right),
  \end{align}
  \begin{align}\label{eq:ximinusxi}
      P_{\xi}^{-\xi}\left(\cos\left(\theta\right)\right)=\frac{\left(\sin\left(\theta\right)\right)^\xi}{2^\xi \Gamma\left(\xi+1\right)},
  \end{align}
   where $\Gamma$ is the Gamma function.
  If $\mu=m$ is a nonnegative integer, we additionally have
  \begin{align}\label{eq:negativeind}
      P_\xi^{-m}\left(\cos\left(\theta\right)\right)=\left(-1\right)^m \frac{\Gamma\left(\xi-m+1\right)}{\Gamma\left(\xi+m+1\right)}P_\xi^m\left(\cos\left(\theta\right)\right).
  \end{align}
 \end{lemma}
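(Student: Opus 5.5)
The plan is not to derive these identities from scratch but to reduce each of them to a standard formula for the Ferrers (on-the-cut) Legendre functions $\mathsf{P}^\mu_\nu(x)$, $x\in(-1,1)$, as collected in Chapter 14 of \cite{nist}. Throughout we substitute $x=\cos(\theta)$ with $\theta\in(0,\pi)$. Then $\sqrt{1-x^2}=\sin(\theta)$ and $x/\sqrt{1-x^2}=\cot(\theta)$, and the chain rule gives $\frac{\der}{\der\theta}=-\sin(\theta)\frac{\der}{\der x}$. The first step is to fix the convention: $P^\mu_\xi$ in the paper denotes the Ferrers function with the same phase as in \cite{nist} and \cite{higuchi}. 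This convention also makes $y(\theta)=P^\mu_\xi(\cos\theta)$ solve \eqref{eq:legeq}, which is just the Legendre equation $(1-x^2)y''-2xy'+(\xi(\xi+1)-\frac{\mu^2}{1-x^2})y=0$ rewritten in $\theta$.

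With this dictionary, the identities are matched as follows.
\begin{itemize}
\item The three-term recurrence in the order, \eqref{eq:rekrek}, is the standard $\mu$-recurrence (DLMF 14.10.1), after dividing by $(1-x^2)^{1/2}$ where needed and replacing $x(1-x^2)^{-1/2}$ by $\cot\theta$.
\item \eqref{eq:recc}, \eqref{eq:reccc} and \eqref{eq:rec} are the mixed recurrences in $(\nu,\mu)$ listed in DLMF 14.10.2--14.10.7. For each one we check the exact index shifts. If a formula appears there only in shifted form (for instance with $\nu-1$ in place of $\nu+1$), we relabel $\xi$ and $\mu$ accordingly.
\item The derivative formulas \eqref{eq:rekb} and \eqref{eq:rekc} follow from the DLMF expressions for $(1-x^2)\frac{\der}{\der x}\mathsf{P}^\mu_\nu$ (14.10.4/14.10.5) and from the combination $\frac{\der}{\der x}\mathsf{P}^\mu_\nu=\frac12\bigl((\nu+\mu)(\nu-\mu+1)\mathsf{P}^{\mu-1}_\nu-\mathsf{P}^{\mu+1}_\nu\bigr)(1-x^2)^{-1/2}$. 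One multiplies by $-\sin\theta$ using $\frac{\der}{\der\theta}=-\sin\theta\frac{\der}{\der x}$. Alternatively, \eqref{eq:rekb} can be obtained by averaging two first-order relations and then eliminating via \eqref{eq:rekrek}.
\end{itemize}

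The special values are taken directly from the tables of elementary cases.
\begin{itemize}
\item \eqref{eq:mueinhalb} is DLMF 14.5.12, namely $\mathsf{P}^{-1/2}_\nu(\cos\theta)=\bigl(\frac{2}{\pi\sin\theta}\bigr)^{1/2}\frac{\sin((\nu+\frac12)\theta)}{\nu+\frac12}$.
\item \eqref{eq:ximinusxi} follows from the hypergeometric representation $\mathsf{P}^\mu_\nu(x)=\frac{1}{\Gamma(1-\mu)}\bigl(\frac{1+x}{1-x}\bigr)^{\mu/2}F(\nu+1,-\nu;1-\mu;\frac{1-x}{2})$ (DLMF 14.3.1). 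Setting $\mu=-\nu$, the hypergeometric function reduces to $(\frac{1+x}{2})^{\nu}$, and simplifying the prefactor gives $\frac{(1-x^2)^{\nu/2}}{2^\nu\Gamma(\nu+1)}$.
\item \eqref{eq:negativeind} is DLMF 14.9.3 for integer order $m$.
\end{itemize}

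The only genuine obstacle is bookkeeping of conventions: the factor $(-1)^m$ (Condon--Shortley phase), the sign of the $\cot\theta$ terms under $\frac{\der}{\der\theta}=-\sin\theta\frac{\der}{\der x}$, and the index shifts in the mixed recurrences. As an independent check against sign errors, we would verify each identity on the explicit case \eqref{eq:ximinusxi}, i.e.\ $P^{-\xi}_\xi$, and on $\mu=-\frac12$ via \eqref{eq:mueinhalb}, where both sides are elementary trigonometric functions.
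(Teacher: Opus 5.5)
Your proposal is the paper's own argument. The paper justifies the lemma only by citing \cite{nist}, Chapter~14, with the Ferrers convention, and with that convention all nine identities hold as stated. One correction to your bookkeeping: only \eqref{eq:rekrek}, \eqref{eq:recc}, \eqref{eq:mueinhalb} and \eqref{eq:negativeind} appear there verbatim (DLMF 14.10.1, 14.10.2, 14.5.12, 14.9.3). By contrast, \eqref{eq:reccc}, \eqref{eq:rec} and \eqref{eq:rekc} are not relabelings of any formula in DLMF 14.10; moreover, 14.10.6--14.10.7 concern $x>1$ and carry the opposite sign in the order recurrence, so they cannot be used. These three must therefore be obtained by short eliminations among 14.10.1--14.10.5. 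For example, \eqref{eq:rekc} follows from 14.10.2 and 14.10.4, and \eqref{eq:reccc} and \eqref{eq:rec} follow by reducing them to the degree recurrence 14.10.3. Finally, \eqref{eq:rekb} follows from \eqref{eq:rekc} and \eqref{eq:rekrek}.
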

For later purposes such as integration by parts, we also need the behavior of the Legendre functions at the boundary points $0$ and $\pi$. We have the following result, see \cite{nist}, Chapter 14.
\begin{lemma}\label{lem:behsing} 
If $\mu \neq -1,-2,-3,\dots$, then we have
    \begin{align*}
        P_{\xi}^{-\mu}\left(\cos\left(\theta\right)\right) \sim \frac{1}{\Gamma\left(1+\mu\right)}\left( \frac{1-\cos\left(\theta\right)}{2} \right)^{\frac{\mu}{2}}, \text{ as } \theta \to 0^+ .
    \end{align*}
    For the behavior at $\theta=\pi$, we can use the following relation
    \begin{align*}
        P_{\xi}^{-\mu}\left(-\cos\left(\theta\right)\right)= \cos\left(\left(\xi-\mu\right)\pi\right) P_{\xi}^{-\mu}\left(\cos\left(\theta\right)\right) - \frac{2}{\pi} \sin\left(\left(\xi-\mu\right)\pi\right) Q_{\xi}^{-\mu}\left(\cos\left(\theta\right)\right)
    \end{align*}
    where $Q_\xi^{-\mu}$ is the Legendre function of the second kind
\end{lemma}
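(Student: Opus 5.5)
The statement consists of two classical facts about Ferrers (Legendre) functions on the cut $(-1,1)$. I would not prove them from scratch but derive them from the hypergeometric representation and the standard connection formulas, as collected in \cite{nist}, Chapter 14.

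\textbf{Behaviour as $\theta\to 0^+$.} I would start from the hypergeometric representation of the Ferrers function of the first kind (\cite{nist}, 14.3.1). With $x=\cos(\theta)\in(-1,1)$ and $1+\mu\notin\{0,-1,-2,\dots\}$, it reads
\begin{align*}
P_\xi^{-\mu}(x)=\frac{1}{\Gamma(1+\mu)}\left(\frac{1-x}{1+x}\right)^{\frac{\mu}{2}} F\left(\xi+1,-\xi;1+\mu;\tfrac{1-x}{2}\right).
\end{align*}
The restriction $\mu\neq-1,-2,-3,\dots$ is exactly what makes $1/\Gamma(1+\mu)\neq 0$ and the hypergeometric series well defined. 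Here $F$ denotes the hypergeometric function, with $F(a,b;c;z)=\sum_k \frac{(a)_k(b)_k}{(c)_k k!}z^k$.

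As $\theta\to0^+$ we have $x\to1$ and $z=\frac{1-x}{2}\to0$. The series converges for $|z|<1$ and tends to $F(\cdot;0)=1$. Moreover $(1+x)^{-\mu/2}\to 2^{-\mu/2}$. Combining these two limits gives
\begin{align*}
P_\xi^{-\mu}(\cos(\theta))\sim\frac{1}{\Gamma(1+\mu)}\left(\frac{1-\cos(\theta)}{2}\right)^{\frac{\mu}{2}},
\end{align*}
which is the first claim.

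\textbf{Behaviour at $\theta=\pi$.} The second claim is the reflection (connection) formula of \cite{nist}, 14.9.10, namely
\begin{align*}
P_\xi^{\mu}(-x)=\cos((\xi+\mu)\pi)P_\xi^\mu(x)-\tfrac{2}{\pi}\sin((\xi+\mu)\pi)Q_\xi^\mu(x),
\end{align*}
applied with $\mu$ replaced by $-\mu$. To justify it I would argue as follows.

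Both $x\mapsto P_\xi^{\mu}(-x)$ and $P_\xi^\mu, Q_\xi^\mu$ solve the associated Legendre equation \eqref{eq:legeq}, which is invariant under $x\mapsto -x$. The pair $P_\xi^\mu, Q_\xi^\mu$ is a fundamental system, since its Wronskian is explicitly nonzero for generic parameters. Hence $P_\xi^{\mu}(-x)=A\,P_\xi^\mu(x)+B\,Q_\xi^\mu(x)$ for constants $A,B$.

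To identify $A$ and $B$, I would evaluate both sides and their derivatives at $x=0$. The closed-form values of $P_\xi^\mu(0)$, $\frac{\mathrm{d}}{\mathrm{d}x}P_\xi^\mu(0)$, $Q_\xi^\mu(0)$ and $\frac{\mathrm{d}}{\mathrm{d}x}Q_\xi^\mu(0)$ are Gamma-function expressions (\cite{nist}, 14.5.1--14.5.4). The left-hand side has value $P_\xi^\mu(0)$ and derivative $-\frac{\mathrm{d}}{\mathrm{d}x}P_\xi^\mu(0)$ at $x=0$. Matching values and derivatives gives a $2\times2$ linear system for $A,B$, to be solved using the reflection and duplication formulas for $\Gamma$.

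For non-generic parameters, where individual Gamma factors have poles, I would extend the identity by analytic continuation in $(\xi,\mu)$.

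\textbf{Main obstacle.} The only genuinely computational step is the Gamma-function simplification that reduces $A,B$ to $\cos((\xi+\mu)\pi)$ and $-\frac{2}{\pi}\sin((\xi+\mu)\pi)$. Since this is a textbook identity, in the paper I would simply cite \cite{nist}, Chapter 14, for both parts rather than reproduce it.
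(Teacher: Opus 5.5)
Your proposal is correct and follows the paper's approach. The paper gives no proof of its own: it cites \cite{nist}, Chapter 14, for exactly the two facts you derive, namely the hypergeometric representation with $\mu\mapsto-\mu$ for the behaviour as $\theta\to 0^+$ and the reflection formula with $\mu\mapsto-\mu$ for $\theta=\pi$. Your additional check, matching values and derivatives at $x=0$, is sound; verifying the stated coefficients needs only the reflection formula for $\Gamma$, not the duplication formula.
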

\begin{remark}
In our situation, $\xi-\mu$ will be an integer and therefore the term with the Legendre function of the second kind will vanish.
\end{remark}
In general, the Legendre functions $P_\xi^\mu$ satisfy two different types of orthogonality relations, depending on whether the degree $\xi$ or the order $\mu$ is fixed. If the order $\mu$ is fixed, we have the orthogonality relation as stated in formula (2.11) in \cite{higuchi}. 
If the degree $\xi$ is fixed we have the following result.
\begin{lemma}\label{lem:ortho} Let $\mu, \Tilde{\mu} >0$ and $\xi-\mu, \xi - \Tilde{\mu} \in \N \cup \{0\}$. Then we have
    \begin{align*}
      \int_0^\pi P_\xi^{-\mu}\left(\cos\left(\theta\right)\right) P_\xi^{-\Tilde{\mu}} \left(\cos\left(\theta\right)\right) \frac{1}{\sin\left(\theta\right)} \der\theta = C\left(\xi, \mu\right) \delta_{\mu \Tilde{\mu}}
      \end{align*}
   and for the special case $\xi=L+\frac{1}{2}$, $\mu=K+\frac{1}{2}$, $0 \leq K\leq L$ the normalization constant is given by
      \begin{align*}
    C\left(L+\frac{1}{2},K+\frac{1}{2}\right)=   \frac{1}{K+\frac{1}{2}} \frac{\left(L-K\right)!}{\left(L+K+1\right)!}  .  
    \end{align*}
\end{lemma}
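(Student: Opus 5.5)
We treat the two assertions separately: first the orthogonality through a Sturm--Liouville argument in the order variable, then the normalization constant in the half-integer case through a ladder (recursion) argument in $K$.

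\emph{Orthogonality.} Put $y_\mu(\theta)\coloneqq P_\xi^{-\mu}(\cos(\theta))$. Since \eqref{eq:legeq} depends only on $\mu^2$, $y_\mu$ solves it, and multiplying by $\sin(\theta)$ gives the self-adjoint form
\begin{align*}
\left(\sin(\theta)\, y_\mu'\right)' + \xi(\xi+1)\sin(\theta)\, y_\mu - \frac{\mu^2}{\sin(\theta)}\, y_\mu = 0 .
\end{align*}
We write this for $\mu$ and for $\tilde\mu$, multiply crosswise by $y_{\tilde\mu}$ and $y_\mu$, subtract, and integrate over $[\eps,\pi-\eps]$. This yields
\begin{align*}
(\tilde\mu^2-\mu^2)\int_\eps^{\pi-\eps} \frac{y_\mu y_{\tilde\mu}}{\sin(\theta)}\der\theta = \left[\sin(\theta)\left(y_{\tilde\mu}' y_\mu - y_\mu' y_{\tilde\mu}\right)\right]_\eps^{\pi-\eps}.
\end{align*}
Lemma \ref{lem:behsing} gives $y_\mu \sim c\,\theta^{\mu}$ as $\theta\to0^+$. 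Differentiating this asymptotic, which we justify via \eqref{eq:rekc}, gives $y_\mu'\sim c\mu\,\theta^{\mu-1}$. Hence the boundary term at $0$ is $O(\theta^{\mu+\tilde\mu})\to 0$, because $\mu,\tilde\mu>0$. The same estimate shows that the integrand is $O(\theta^{\mu+\tilde\mu-1})$, so the integral converges.

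At $\pi$ we use the reflection formula of Lemma \ref{lem:behsing}. Since $\xi-\mu\in\N\cup\{0\}$, the $Q$-term vanishes and $y_\mu(\pi-\theta)=\pm y_\mu(\theta)$, so the same decay holds at $\pi$. Letting $\eps\to0$, and using that $\mu\neq\tilde\mu$ with $\mu,\tilde\mu>0$ implies $\mu^2\neq\tilde\mu^2$, we obtain orthogonality. For $\mu=\tilde\mu$ the integral is finite, and we define $C(\xi,\mu)$ to be its value.

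\emph{Normalization for $\xi=L+\tfrac12$, $\mu=K+\tfrac12$.} We induct downward in $K$, starting at $K=L$. For the base case, \eqref{eq:ximinusxi} gives $y_{\xi}=\sin^{\xi}(\theta)/(2^\xi\Gamma(\xi+1))$, so
\begin{align*}
C(\xi,\xi)=\frac{1}{4^{\xi}\Gamma(\xi+1)^2}\int_0^\pi \sin^{2L}(\theta)\der\theta .
\end{align*}
Inserting the Wallis integral $\pi(2L)!/(4^L (L!)^2)$ and $\Gamma(L+\tfrac32)=\sqrt\pi\,(2L+2)!/(4^{L+1}(L+1)!)$, a direct simplification should give $\frac{1}{L+\frac12}\frac{1}{(2L+1)!}$, which matches the claimed formula at $K=L$.

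For the step we use ladder operators. Rewrite \eqref{eq:rekc} and \eqref{eq:rekb} as first-order relations of the form
\begin{align*}
\left(\tfrac{\der}{\der\theta}+\mu\cot(\theta)\right) y_{\mu} = a_\mu\, y_{\mu-1}, \qquad \left(-\tfrac{\der}{\der\theta}+(\mu-1)\cot(\theta)\right) y_{\mu-1} = b_\mu\, y_\mu ,
\end{align*}
with explicit constants. We expect $a_\mu b_\mu = (\xi+\mu)(\xi-\mu+1)$; this has to be confirmed from the sign conventions for negative order together with \eqref{eq:negativeind}. We then compute $\int y_{\mu-1}^2/\sin$ by writing one factor $y_{\mu-1}$ via the first relation and integrating by parts. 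The boundary terms vanish by the asymptotics above, since the orders are $\geq \tfrac12$. The expected result is a recursion of the form
\begin{align*}
C(\xi,\mu-1)=\frac{\mu}{\mu-1}\,\frac{(\xi+\mu)(\xi-\mu+1)}{(\text{const})}\,C(\xi,\mu),
\end{align*}
and the constants must combine so that, with $\xi=L+\tfrac12$ and $\mu=K+\tfrac12$, the ratio becomes $\frac{K+\frac12}{K-\frac12}(L+K+1)(L-K+1)$. That is precisely the ratio $C(K-1)/C(K)$ required by the claimed formula.

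The main obstacle is the step itself. We must pin down the exact constants $a_\mu$ and $b_\mu$, including signs and the conventions for negative orders, and verify that the factor $\mu/(\mu-1)$ produced by the weight $1/\sin(\theta)$ appears correctly. If the ladder bookkeeping becomes unwieldy, we fall back on a direct check: for half-integer parameters, \eqref{eq:mueinhalb} together with the recursion \eqref{eq:rekrek} expresses every $y_{K+1/2}$ as $\sin^{-1/2}(\theta)$ times a trigonometric polynomial, and the integral can then be evaluated explicitly.
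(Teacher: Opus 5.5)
\textbf{Gap in the recursion step.} Your orthogonality argument is the same as the paper's and is fine. Your normalization route is different from the paper's, and the pieces you check are correct. The base case at $K=L$ via \eqref{eq:ximinusxi} and Wallis does give $\frac{2}{(2L+1)(2L+1)!}$. The ladder constants are also right: \eqref{eq:rekc} gives $a_\mu=1$, and combining \eqref{eq:rekb} with \eqref{eq:rekc} gives $b_\mu=(\xi+\mu)(\xi-\mu+1)$.

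However, the recursion step is left open, and the mechanism you describe does not produce it. The operators $\frac{\mathrm{d}}{\mathrm{d}\theta}+\mu\cot\theta$ and $-\frac{\mathrm{d}}{\mathrm{d}\theta}+(\mu-1)\cot\theta$ are adjoint in $L^2(\sin\theta\,\mathrm{d}\theta)$, not in $L^2(\mathrm{d}\theta/\sin\theta)$. Substitute $y_{\mu-1}=y_\mu'+\mu\cot\theta\, y_\mu$ into one factor and integrate by parts. What you get is
\begin{align*}
C(\xi,\mu-1)=(\xi+\mu)(\xi-\mu+1)\,C(\xi,\mu)+2\int_0^\pi \frac{\cos\theta}{\sin^2\theta}\,y_\mu y_{\mu-1}\,\mathrm{d}\theta .
\end{align*}
The cross term is not a multiple of either constant. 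Expanding it further with the ladder relation only leads to integrals such as $\int y_\mu^2\cos^2\theta/\sin^3\theta$. So the factor $\mu/(\mu-1)$ does not come out of the weight automatically, and without evaluating this term there is no recursion. Your fallback via explicit trigonometric polynomials is only a plan, not a proof.

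\textbf{How to close it.} The missing ingredient is \eqref{eq:rekrek} combined with the orthogonality you just proved. Taking order $-\mu-1$ in \eqref{eq:rekrek} gives $2\mu\cot\theta\,y_\mu=y_{\mu-1}+(\xi-\mu)(\xi+\mu+1)y_{\mu+1}$. Hence
\begin{align*}
2\int_0^\pi \frac{\cos\theta}{\sin^2\theta}\,y_\mu y_{\mu-1}\,\mathrm{d}\theta=\frac{1}{\mu}C(\xi,\mu-1)+\frac{(\xi-\mu)(\xi+\mu+1)}{\mu}\int_0^\pi\frac{y_{\mu+1}y_{\mu-1}}{\sin\theta}\,\mathrm{d}\theta=\frac{1}{\mu}C(\xi,\mu-1).
\end{align*}
For $K<L$ the last integral vanishes by orthogonality, since the orders $K\pm 1+\frac12$ are positive and admissible. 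For $K=L$ the prefactor $\xi-\mu$ is zero. This gives $\bigl(1-\frac{1}{\mu}\bigr)C(\xi,\mu-1)=(\xi+\mu)(\xi-\mu+1)C(\xi,\mu)$, which is exactly the ratio $\frac{K+\frac12}{K-\frac12}(L+K+1)(L-K+1)$ you need.

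The analytic details also hold. In the step $\mu=K+\frac12\geq\frac32$, so the boundary term $y_{\mu-1}y_\mu/\sin\theta=O(\theta^{2\mu-2})$ vanishes at both ends. The cross integrand is $O(\theta^{2\mu-3})$, which is integrable.

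\textbf{Comparison with the paper.} The paper uses the same two ingredients, \eqref{eq:rekrek} and orthogonality, but on $\int \frac{\cos\theta}{\sin^2\theta}y_\mu y_\mu'$. That yields a three-term recurrence, which forces it to compute two initial values, \eqref{eq:initval1} and \eqref{eq:initval2}, by lengthy trigonometric manipulations. Once you insert the step above, your pairing of $y_\mu$ with $y_{\mu-1}$ gives a two-term recursion anchored at the closed-form case $K=L$. That is a genuinely shorter proof than the paper's.
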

\begin{proof}
First, we study the orthogonality property.
    Let $\xi$ be fixed. By \eqref{eq:legeq} the Legendre function $P_\xi^{-\mu}$ satisfies 
    \begin{align}\label{eq:odemu}
      \frac{1}{\sin\left(\theta\right)}  \frac{\der}{\der \theta}\left(\sin\left(\theta\right)\frac{\der}{\der \theta} \left(P_\xi^{-\mu}\left(\cos\left(\theta\right)\right)\right)\right)+\left(\xi\left(\xi+1\right) -\frac{\mu^2}{\sin^2\left(\theta\right)}\right)P_\xi^{-\mu}\left(\cos\left(\theta\right)\right)=0
    \end{align}
    and $P_\xi^{-\Tilde{\mu}}$ satisfies
    \begin{align}\label{eq:odemutil}
       \frac{1}{\sin\left(\theta\right)}  \frac{\der}{\der \theta}\left(\sin\left(\theta\right)\frac{\der}{\der \theta} \left(P_\xi^{-\Tilde{\mu}}\left(\cos\left(\theta\right)\right)\right)\right)+\left(\xi\left(\xi+1\right)-\frac{\Tilde{\mu}^2}{\sin^2\left(\theta\right)}\right)P_\xi^{-\Tilde{\mu}}\left(\cos\left(\theta\right)\right)=0.
    \end{align}
The multiplication of \eqref{eq:odemu} with $P_\xi^{-\Tilde{\mu}}$ and \eqref{eq:odemutil} with $P_\xi^{-\mu}$ and subtraction leads to 
\begin{align*}
   &\frac{1}{\sin\left(\theta\right)} \frac{\der}{\der \theta}\left(\sin\left(\theta\right)\left(P_\xi^{-\Tilde{\mu}}\left(\cos\left(\theta\right)\right) \frac{\der}{\der \theta}\left( P_\xi^{-\mu}\left(\cos\left(\theta\right)\right)\right) - P_\xi^{-\mu}\left(\cos\left(\theta\right)\right) \frac{\der}{\der \theta} \left(P_\xi^{-\Tilde{\mu}}\left(\cos\left(\theta\right)\right)\right)\right)\right) \\
   &=\frac{\Tilde{\mu}^2-\mu^2}{\sin^2\left(\theta\right)} P_\xi^{-\mu}\left(\cos\left(\theta\right)\right) P_\xi^{-\Tilde{\mu}}\left(\cos\left(\theta\right)\right).
\end{align*}
Then we multiply this equation with $\sin\left(\theta\right)$, integrate and obtain
\begin{align*}
    &\left[ \sin\left(\theta\right) \left(P_\xi^{-\Tilde{\mu}}\left(\cos\left(\theta\right)\right) \frac{\der}{\der \theta} \left(P_\xi^{-\mu}\left(\cos\left(\theta\right)\right)\right) - P_\xi^{-\mu}\left(\cos\left(\theta\right)\right) \frac{\der}{\der \theta} \left(P_\xi^{-\Tilde{\mu}}\left(\cos\left(\theta\right)\right)\right)\right)\right]_{\theta=0}^{\pi}\\
    &= \left(\Tilde{\mu}^2-\mu^2\right)\int_{0}^\pi \frac{P_\xi^{-\mu}\left(\cos\left(\theta\right)\right) P_\xi^{-\Tilde{\mu}}\left(\cos\left(\theta\right)\right)}{\sin\left(\theta\right)} \der \theta .
\end{align*}
Using Lemma \ref{lem:behsing} and the assumption $\xi-\mu, \xi-\Tilde{\mu} \in \N \cup \{0\}$ we conclude that the boundary terms vanish. Since $\Tilde{\mu}\neq \mu$ the Legendre functions $P_\xi^{-\mu}$ and $P_\xi^{-\Tilde{\mu}}$ are orthogonal with respect to the weight $\frac{1}{\sin(\theta)}$.

In order to calculate the normalization constant, we abbreviate
\begin{align*}
    I_{\xi,\mu} \coloneqq\int_{0}^\pi \frac{\left(P_\xi^{-\mu}\left(\cos\left(\theta\right)\right)\right)^2}{\sin\left(\theta\right)} \der \theta .
\end{align*}
In the following we will show that $I_{\xi,\mu}$ satisfies the recurrence relation
\begin{align}\label{eq:recrel}
    I_{\xi,\mu-1}=\frac{2\mu}{\mu-1}\left(\xi\left(\xi+1\right)-\mu^2\right)I_{\xi,\mu}-\frac{\mu+1}{\mu-1}\left(\xi-\mu\right)^2\left(\xi+\mu+1\right)^2 I_{\xi,\mu+1} .
\end{align}

From the recurrence formula \eqref{eq:rekc} we conclude
\begin{align*}
    \frac{\left(P_\xi^{-\mu+1}\left(\cos\left(\theta\right)\right)\right)^2}{\sin\left(\theta\right)}= \left( \frac{1}{\sqrt{\sin\left(\theta\right)}}\frac{\der}{\der \theta}\left(P_\xi^{-\mu}\left(\cos\left(\theta\right)\right)\right)+\frac{\mu \cos\left(\theta\right)}{\left(\sin\left(\theta\right)\right)^\frac{3}{2}}P_\xi^{-\mu}\left(\cos\left(\theta\right)\right) \right)^2 .
\end{align*}
Then by integration we have
\begin{align*}
    I_{\xi,\mu-1}=&\int_0^\pi \left( \frac{1}{\sqrt{\sin\left(\theta\right)}}\frac{\der}{\der \theta}\left(P_\xi^{-\mu}\left(\cos\left(\theta\right)\right)\right)\right)^2 + 2 \frac{\mu \cos\left(\theta\right)}{\sin^2\left(\theta\right)} P_\xi^{-\mu}\left(\cos\left(\theta\right)\right) \frac{\der}{\der \theta}\left(P_\xi^{-\mu}\left(\cos\left(\theta\right)\right)\right) \\
    &+\left( \frac{\mu \cos\left(\theta\right)}{\left(\sin\left(\theta\right)\right)^\frac{3}{2}}P_\xi^{-\mu}\left(\cos\left(\theta\right)\right)\right)^2 \der \theta .
\end{align*}
For the first integral, we use integration by parts and the fact that $P_\xi^{-\mu}\left(\cos\left(\theta\right)\right)$ solves \eqref{eq:legeq}.
Hence
\begin{align*}
    \int_0^\pi \frac{1}{\sin\left(\theta\right)} \left( \frac{\der}{\der \theta}\left(P_\xi^{-\mu}\left(\cos\left(\theta\right)\right) \right)\right)^2 \der \theta &= \int_0^\pi 2 \frac{\cos\left(\theta\right)}{\sin^2\left(\theta\right)}  P_\xi^{-\mu}\left(\cos\left(\theta\right)\right) \frac{\der}{\der \theta}\left(P_\xi^{-\mu}\left(\cos\left(\theta\right)\right)\right) \der \theta \\
    &-\int_0^\pi \frac{\mu^2}{\sin^3\left(\theta\right)}\left(P_\xi^{-\mu}\left(\cos\left(\theta\right)\right)\right)^2 \der \theta + \xi\left(\xi+1\right)I_{\xi,\mu} .
\end{align*}
Note that the boundary terms vanish by the same arguments as above and we obtain

\begin{align*}
    I_{\xi,\mu-1}=\left(\xi\left(\xi+1\right)-\mu^2\right)I_{\xi,\mu}+ \left(\mu+1\right) \int_0^\pi \frac{2 \cos\left(\theta\right)}{\sin^2\left(\theta\right)} P_\xi^{-\mu}\left(\cos\left(\theta\right)\right) \frac{\der}{\der \theta}\left(P_\xi^{-\mu}\left(\cos\left(\theta\right)\right)\right) \der \theta .
\end{align*}
Now we use the recurrence formulas \eqref{eq:rekrek},\eqref{eq:rekb} and the orthogonality property from the beginning of the proof to calculate
\begin{align*}
    \int_0^\pi \frac{2 \cos\left(\theta\right)}{\sin^2\left(\theta\right)} P_\xi^{-\mu}\left(\cos\left(\theta\right)\right) \frac{\der}{\der \theta}\left(P_\xi^{-\mu}\left(\cos\left(\theta\right)\right)\right) \der \theta =\frac{1}{2\mu} I_{\xi,\mu-1}-\frac{1}{2\mu}\left(\xi+\mu+1\right)^2\left(\xi-\mu\right)^2 I_{\xi,\mu+1} .
\end{align*}
Altogether we have
\begin{align*}
    I_{\xi,\mu-1}=\frac{\mu+1}{2\mu}I_{\xi,\mu-1}-\frac{\mu+1}{2\mu}\left(\xi-\mu\right)^2\left(\xi+\mu+1\right)^2 I_{\xi,\mu+1}+\left(\xi\left(\xi+1\right)-\mu^2\right)I_{\xi,\mu}
\end{align*}
which is equivalent to \eqref{eq:recrel}. \\
One can show that $\frac{1}{\mu}\frac{\Gamma\left(\xi-\mu+1\right)}{\Gamma\left(\xi+\mu +1\right)} $ also satisfies the recurrence relation \eqref{eq:recrel} and that the initial values
\begin{align}\label{eq:initval1}
    I_{L+\frac{1}{2}, \frac{1}{2}}= \int_{0}^\pi \frac{\left(P_{L+\frac{1}{2}}^{-\frac{1}{2}}\left(\cos\left(\theta\right)\right)\right)^2}{\sin\left(\theta\right)} \der \theta = \frac{2}{L+1}=\frac{2 \Gamma\left(L+1\right)}{\Gamma\left(L+2\right)}
\end{align}
\begin{align}\label{eq:initval2}
     I_{L+\frac{1}{2}, \frac{3}{2}}= \int_{0}^\pi \frac{\left(P_{L+\frac{1}{2}}^{-\frac{3}{2}}\left(\cos\left(\theta\right)\right)\right)^2}{\sin\left(\theta\right)} \der \theta =\frac{2}{3}\frac{1}{\left(L+2\right)\left(L+1\right)L}=\frac{2}{3} \frac{\Gamma\left(L\right)}{\Gamma\left(L+3\right)}
\end{align}
for $L\geq 1$ coincide. This then provides the formula of the normalization constant. \\
We sketch the proof of the identities \eqref{eq:initval1} and \eqref{eq:initval2}.
From the formula \eqref{eq:mueinhalb} we conclude
\begin{align*}
    I_{L+\frac{1}{2}, \frac{1}{2}}=  \frac{2}{\pi (L+1)^2} \int_0^\pi \frac{\left( \sin\left(\left( L+1\right)\theta \right) \right)^2}{\left( \sin\left(\theta \right) \right)^2} \der \theta .
\end{align*}
To calculate the integral we use the identity
\begin{align}\label{eq:sinsquare}
    \frac{\left( \sin\left(\left( L+1\right)\theta \right) \right)^2}{\left( \sin\left(\theta \right) \right)^2}= L+1+2 \sum_{k=1}^{L}\left(L+1-k\right)\cos\left(2k\theta\right)
\end{align}
which follows from Euler's formula, the formula for the finite geometric sum and a suitable rearrangement of the terms. This then verifies the identity \eqref{eq:initval1}. 
In order to show the identity \eqref{eq:initval2} we first use \eqref{eq:rec} and obtain
\begin{align*}
     I_{L+\frac{1}{2}, \frac{3}{2}}&= \int_{0}^\pi \frac{\left(P_{L+\frac{1}{2}}^{-\frac{3}{2}}\left(\cos\left(\theta\right)\right)\right)^2}{\sin\left(\theta\right)} \der \theta  \\
     &= \frac{1}{9} \int_0^\pi \sin\left( \theta \right) \left( P_{L+\frac{3}{2}}^{-\frac{1}{2}}\left( \cos \left( \theta\right) \right) \right)^2+ \left(L+3\right)^2\left(L+4\right)^2  \sin\left( \theta \right) \left( P_{L+\frac{3}{2}}^{-\frac{5}{2}}\left( \cos \left( \theta\right) \right) \right)^2 \\
     & \quad \quad \quad  +2 \left( L+3\right)\left(L+4 \right) \sin \left(\theta\right) P_{L+\frac{3}{2}}^{-\frac{1}{2}}\left( \cos \left( \theta\right) \right) P_{L+\frac{3}{2}}^{-\frac{5}{2}}\left( \cos \left( \theta\right) \right) \der \theta .
\end{align*}
The first two integrals containing the square of a Legendre function are given by formula (2.11) from \cite{higuchi}.
For the last integral we use \eqref{eq:reccc} twice to increase the order of the Legendre function $P_{L+\frac{3}{2}}^{-\frac{5}{2}}$ from $-\frac{5}{2}$ to $-\frac{1}{2}$, i.e.
\begin{align*}
   & \int_0^\pi \sin \left(\theta\right) P_{L+\frac{3}{2}}^{-\frac{1}{2}}\left( \cos \left( \theta\right) \right) P_{L+\frac{3}{2}}^{-\frac{5}{2}}\left( \cos \left( \theta\right) \right) \der \theta\\
   &= \frac{1}{4(L+2)(L+3)}  \int_0^\pi \frac{1}{\sin\left(\theta \right)} P_{L+\frac{3}{2}}^{-\frac{1}{2}}\left( \cos \left( \theta\right) \right) P_{L+\frac{7}{2}}^{-\frac{1}{2}}\left( \cos \left( \theta\right) \right) \der \theta   \\
   &+ \frac{1}{4(L+1)(L+2)}  \int_0^\pi \frac{1}{\sin\left(\theta \right)} P_{L+\frac{3}{2}}^{-\frac{1}{2}}\left( \cos \left( \theta\right) \right) P_{L-\frac{1}{2}}^{-\frac{1}{2}}\left( \cos \left( \theta\right) \right) \der \theta \\
   &-\frac{1}{2(L+1)(L+3)} \int_0^\pi \frac{1}{\sin\left(\theta \right)}\left( P_{L+\frac{3}{2}}^{-\frac{1}{2}}\left( \cos \left( \theta\right) \right) \right)^2  .  
\end{align*}
The last integral can be calculated using the identity \eqref{eq:initval1} for $L+1$.
The first integral can be calculated by making use of formula \eqref{eq:mueinhalb}, trigonometric addition theorems and the identity \eqref{eq:sinsquare}. The second integral then follows from the first integral by replacing $L$ by $L-2$. 
Merging all the calculations together leads to the identity \eqref{eq:initval2}.
\end{proof}

In case of integer order and integer degree we have the following result to bound the Legendre functions.
\begin{lemma}\label{lem:boundlohöfer} \label{lem:legpolybound}
    For any $m,n \in \N$ with $1 \leq m \leq n$, we have 
\begin{align*}
    \max_{s \in [-1,1]}|P_n^m\left(s\right)| \left( \frac{\left(n-m\right)!}{\left(n+m\right)!} \right)^\frac{1}{2} < \frac{2}{m^\frac{1}{4}}.
\end{align*}
For any $n \in \N$ we have 
  \begin{align*}
        \max_{s\in [-1,1] }|P_n^0\left(s\right)| \leq 1.
    \end{align*}
\end{lemma}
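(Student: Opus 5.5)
The second inequality is classical, and I would handle it first. I would use Laplace's first integral
\begin{align*}
P_n^0(s)=\frac{1}{\pi}\int_0^\pi \left(s+\ii\sqrt{1-s^2}\cos\phi\right)^n \der\phi, \quad s\in[-1,1].
\end{align*}
The integrand has modulus $(s^2+(1-s^2)\cos^2\phi)^{n/2}\leq 1$, so $|P_n^0(s)|\leq 1$ follows immediately.

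For the first inequality, which is Lohöfer's bound, the same representation is too crude. With a factor $\cos(m\phi)$ inserted it only gives $|P_n^m|\le (n+m)!/n!$, which does not match the normalization $((n-m)!/(n+m)!)^{1/2}$. I would therefore argue through the ODE.

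\textbf{Setup.} Set $y(\theta)\coloneqq\left(\frac{(n-m)!}{(n+m)!}\right)^{1/2}P_n^m(\cos\theta)$. By \eqref{eq:legeq}, $y$ solves $(\sin\theta\, y')'+\sin\theta\, q(\theta) y=0$ with $q(\theta)=n(n+1)-\frac{m^2}{\sin^2\theta}$. It carries the $L^2$ normalization $\int_0^\pi y^2\sin\theta\der\theta=\frac{2}{2n+1}$, and by symmetry it suffices to work on $(0,\pi/2]$.

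\textbf{Step 1: locating the maximum (Sonin-type argument).} I would introduce
\begin{align*}
F(\theta)=y^2+\frac{(y')^2}{q(\theta)}
\end{align*}
on the oscillatory region $\{q>0\}=(\theta_0,\pi-\theta_0)$, where $\sin\theta_0=m/\sqrt{n(n+1)}$. A direct computation gives
\begin{align*}
F'=-\frac{(y')^2}{q^2}\left(\frac{2q\cos\theta}{\sin\theta}+q'\right).
\end{align*}
The bracket has a sign on $(\theta_0,\pi/2]$, so $F$ is monotone there. Hence the local maxima of $|y|$, which occur where $y'=0$ and so $F=y^2$, decrease towards $\pi/2$. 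On the non-oscillatory region $(0,\theta_0]$ the function $|y|$ is monotone increasing, because $y$ has no zeros there and is convex relative to the equation. Together these show that $\max|y|$ is attained near the turning point $\theta_0$. The sup is thus controlled by $F(\theta_1)$, where $\theta_1$ is the first critical point of $y$.

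\textbf{Step 2: estimating the value at the turning point.} This is the main obstacle, namely getting the uniform constant $2$ and the decay $m^{-1/4}$. I would compare $F$ on $[\theta_0,\pi/2]$ with the normalization integral. Monotonicity gives $y^2+(y')^2/q\geq F(\theta_1)$ at local maxima beyond $\theta_1$, and on average $y^2$ is about $F/2$ over oscillations. Hence
\begin{align*}
\frac{2}{2n+1}\gtrsim \frac{F(\theta_1)}{2}\int_{\theta_1}^{\pi/2}\sin\theta\der\theta\approx\frac{F(\theta_1)}{2}\cos\theta_0.
\end{align*}
This alone gives $F(\theta_1)\lesssim \frac{1}{n\cos\theta_0}$, which is bad when $m\approx n$. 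In that regime I would instead use the Airy-type width of the turning region, $\delta\approx (n\sin\theta_0\cos\theta_0)^{-1/3}$-scaled. Concretely, I would bound $y^2$ on a neighbourhood of $\theta_1$ of that width from below by $F(\theta_1)/2$ and again use the normalization; balancing the two estimates should produce $\sup y^2\leq 4m^{-1/2}$. For the extreme case $m=n$ I would check directly with \eqref{eq:ximinusxi} and \eqref{eq:negativeind}: the normalized function is $c\sin^n\theta$ with $c^2=\frac{(2n)!}{4^n (n!)^2}\sim(\pi n)^{-1/2}$, consistent with the bound.

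If the balancing argument cannot be made sharp enough to give the explicit constant $2$, I would fall back on citing Lohöfer's original result (Lohöfer, 1998, \emph{J.\ Approx.\ Theory}), which is exactly this statement.
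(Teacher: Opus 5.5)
The paper gives no argument of its own here: it cites Corollary 3 of Lohöfer for the first bound and Chapter 10 of Higher Transcendental Functions for the second. Your Laplace-integral proof of $\max_{[-1,1]}|P_n^0|\le 1$ is correct and replaces the second citation with a self-contained argument. Your fallback for the first bound, citing Lohöfer, is exactly the paper's route.

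The self-contained argument you sketch for the first bound has a genuine gap. Step 1 is sound. Indeed $2q\cot\theta+q'=2n(n+1)\cot\theta$, so $F'=-2n(n+1)\cot\theta\,(y')^2/q^2\le 0$ on $(\theta_0,\pi/2]$, and $\max y^2=F(\theta_1)$. But because $F$ is nonincreasing, for $\theta\ge\theta_1$ you only get $F(\theta)\le F(\theta_1)$. The inequality $y^2+(y')^2/q\ge F(\theta_1)$ that you feed into the normalization $\int_0^\pi y^2\sin\theta\,\mathrm{d}\theta=\frac{2}{2n+1}$ therefore goes the wrong way. The resulting bulk bound $F(\theta_1)\lesssim 1/(n\cos\theta_0)$ is in fact false. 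For fixed $m$ and $n\to\infty$, the Bessel (Mehler--Heine) asymptotics of $P_n^{-m}$ give $y(x/n)\to\pm J_m(x)$. Hence $\liminf_n\max|y|\ge\max|J_m|>0$, while $1/(n\cos\theta_0)\to0$.

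The turning-point step does not repair this. A Sonin-type quantity that does increase is $G=(n(n+1)\sin^2\theta-m^2)y^2+\sin^2\theta\,(y')^2$, with $G'=2n(n+1)\sin\theta\cos\theta\,y^2\ge0$ on $(0,\pi/2]$. However, $G(\theta_1)$ carries the factor $n(n+1)\sin^2\theta_1-m^2$, which degenerates at the turning point. A lower bound for $y^2$ near $\theta_1$ would need a quantitative Airy-type analysis, and you only announce it (``should produce'').

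There is a further obstruction. The mass $\frac{2}{2n+1}$ lies mostly in the oscillatory bulk. For fixed $m$, the turning-point window has width $\sim 1/n$ and weight $\sin\theta\sim m/n$, so it carries only $O(n^{-2})$ of that mass. Balancing a window lower bound against the normalization therefore cannot give a bound uniform in $n$, let alone $2m^{-1/4}$ with that explicit constant. It is of the right order only near the diagonal, as in your check $m=n$.

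So the first inequality rests entirely on the citation. That is legitimate and identical to the paper, but Steps 1--2 should not be presented as a proof.
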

\begin{proof}
    The first result goes back to Lohöfer, Corollary 3 in \cite{lohöfer}. The second result can be found in Chapter 10 in \cite{htf}.
\end{proof}
The following result verifies \eqref{eq:legbound} for the special case $i=2$.
\begin{lemma}\label{lem:i2}
    There exists a constant $c>0$ such that
   \begin{align*}
     \sup_{\theta\in[0,\pi]}|{_{2}}{\overline{P}}_{L}^{K}\left(\theta\right)| \leq c \left(L+1\right)^\frac{1}{2}
\end{align*}
   for all $0\leq |K|\leq L$
\end{lemma}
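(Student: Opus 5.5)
The plan is to reduce the claim directly to Lemma \ref{lem:legpolybound} (Lohöfer's bound). For $i=2$ the exponent $\frac{i-2}{2}$ vanishes, so the definition of ${_{2}}{\overline{P}}_{L}^{K}$ involves only a Legendre function of integer degree $L$ and integer order. The weight $(\sin\theta)^{\frac{2-i}{2}}$ equals $1$. Explicitly,
\begin{align*}
{_{2}}{\overline{P}}_{L}^{K}(\theta)=\sqrt{\frac{2L+1}{2}\,\frac{(L+K)!}{(L-K)!}}\;P_L^{-K}(\cos\theta).
\end{align*}
The goal is to show that the normalized factor $\sqrt{\frac{(L+K)!}{(L-K)!}}\,|P_L^{-K}|$ is bounded uniformly in $L$ and $K$. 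The remaining prefactor $\sqrt{(2L+1)/2}$ is then at most $c(L+1)^{1/2}$.

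I would treat the cases $K\ge 0$ and $K<0$ separately. For $1\le K\le L$, I convert the negative order into a positive one using \eqref{eq:negativeind} with $\xi=L$ and $m=K$. Since $\Gamma(L-K+1)/\Gamma(L+K+1)=(L-K)!/(L+K)!$, this gives
\begin{align*}
\sqrt{\tfrac{(L+K)!}{(L-K)!}}\,|P_L^{-K}(\cos\theta)|=\sqrt{\tfrac{(L-K)!}{(L+K)!}}\,|P_L^{K}(\cos\theta)|<\frac{2}{K^{1/4}}\le 2
\end{align*}
by the first part of Lemma \ref{lem:legpolybound}.

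For $K=0$, the function $P_L^0$ is the Legendre polynomial, and the second part of Lemma \ref{lem:legpolybound} gives $|P_L^0|\le 1$. The degenerate case $L=0$ is covered here as well, since $P_0^0=1$.

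If $K$ is negative, say $K=-m$ with $1\le m\le L$, the definition already reads $\sqrt{\frac{2L+1}{2}\frac{(L-m)!}{(L+m)!}}\,P_L^{m}(\cos\theta)$. Lohöfer's estimate then applies directly, without the conversion step.

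Combining the cases yields
\begin{align*}
\sup_{\theta\in[0,\pi]}|{_{2}}{\overline{P}}_{L}^{K}(\theta)|\le 2\sqrt{\tfrac{2L+1}{2}}\le 2\sqrt{2}\,(L+1)^{1/2},
\end{align*}
so the lemma holds with $c=2\sqrt{2}$.

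There is no real obstacle in this argument. The only points needing care are the bookkeeping of the sign and normalization conventions. First, one must check that $\Gamma$-quotients become factorials for integer arguments. Second, one must confirm that the conventions for $P_\xi^\mu$ on $(-1,1)$ used in \eqref{eq:negativeind} agree with those in Lohöfer's bound. For integer order, a mismatch there amounts at most to a factor $(-1)^m$, which does not affect absolute values.
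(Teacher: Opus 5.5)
Your proposal is correct and follows the paper's proof essentially verbatim. It uses the same three-way case split: $K=0$ via $|P_L^0|\le 1$, $K<0$ via Lohöfer's estimate applied directly, and $K>0$ via \eqref{eq:negativeind} followed by Lohöfer, with the prefactor $\sqrt{(2L+1)/2}$ supplying the growth $(L+1)^{1/2}$.
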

\begin{proof}
    We have
    \begin{align*}
         {_{2}}{\overline{P}}_{L}^{K}\left(\theta\right)=\sqrt{\frac{2L+1}{2}\frac{\left(L+K\right)!}{\left(L-K\right)!}}P_L^{-K}\left(\cos\left(\theta\right)\right)
    \end{align*}
    and for the case $0=K\leq L$, Lemma \ref{lem:legpolybound} directly implies 
    \begin{align*}
         \sup_{\theta\in[0,\pi]}|{_{2}}{\overline{P}}_{L}^{0}\left(\theta\right)| \leq \sqrt{\frac{2L+1}{2}} \leq c \left(L+1\right)^\frac{1}{2}
    \end{align*}
   for a constant $c>0$ independent of $L$.  Next, for $-L\leq K \leq -1$ we use Lemma \ref{lem:boundlohöfer} and obtain
   \begin{align*}
       \sup_{\theta\in[0,\pi]}|{_{2}}{\overline{P}}_{L}^{-K}\left(\theta\right)| \leq \frac{1}{\left(-K\right)^\frac{1}{4}}\sqrt{\frac{2L+1}{2}} \leq c \left(L+1\right)^\frac{1}{2}
   \end{align*}
   for a constant $c>0$ independent of $L$ and $K$. Lastly, for the case $1\leq K\leq L$ we use formula \eqref{eq:negativeind} and then proceed as before using Lemma \ref{lem:boundlohöfer}.
\end{proof}

Similarly we can prove \eqref{eq:legbound} for the special case $i=4$ directly.
\begin{lemma}\label{lem:i4}
There exists a constant $c >0$ such that
 \begin{align*}
         \sup_{\theta\in[0,\pi]}|{_{4}}{\overline{P}}_{L}^{K}\left(\theta\right)| < c \left(L+1\right)^{\frac{3}{2}}
    \end{align*}
     for all $0 \leq K \leq L$.
\end{lemma}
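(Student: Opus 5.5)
The plan is to remove the singular factor $(\sin\theta)^{-1}$ from ${_{4}}{\overline{P}}_{L}^{K}$ with one of the recurrences in Lemma \ref{lem:rec}, and then bound each resulting integer-order, integer-degree Legendre function with Lohöfer's estimate from Lemma \ref{lem:boundlohöfer}, as in Lemma \ref{lem:i2}.

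For $i=4$ the definition gives
\begin{align*}
{_{4}}{\overline{P}}_{L}^{K}(\theta)=\sqrt{\tfrac{2L+3}{2}\tfrac{(L+K+2)!}{(L-K)!}}\,\frac{1}{\sin(\theta)}\,P_{L+1}^{-(K+1)}(\cos(\theta)).
\end{align*}
Here both the degree $L+1$ and the order $-(K+1)$ are integers. I apply \eqref{eq:rec} with $\xi=L+1$ and $\mu=-(K+1)$. Since $\mu\neq 0$ for every $K\geq 0$, I can solve for the singular term:
\begin{align*}
\frac{1}{\sin(\theta)}P_{L+1}^{-(K+1)}(\cos(\theta))=\frac{1}{2(K+1)}\Big(P_{L+2}^{-K}(\cos(\theta))+(L+K+3)(L+K+4)\,P_{L+2}^{-(K+2)}(\cos(\theta))\Big),
\end{align*}
valid for $\theta\in(0,\pi)$. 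The right-hand side is continuous on $[0,\pi]$, so it suffices to bound it on the open interval.

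Next I estimate the two terms. By \eqref{eq:negativeind} and Lemma \ref{lem:boundlohöfer}, for $0\le m\le n$,
\begin{align*}
|P_n^{-m}(s)|\le 2\left(\frac{(n-m)!}{(n+m)!}\right)^{1/2},
\end{align*}
where for $m=0$ one uses $|P_n^0|\le 1$ instead. Multiplying by the prefactor:
\begin{itemize}
\item[(i)] For the first term, the factorials combine to $\big((L+K+2)!/(L-K)!\cdot (L+2-K)!/(L+2+K)!\big)^{1/2}=\big((L-K+1)(L-K+2)\big)^{1/2}\lesssim L+1$.
\item[(ii)] For the second term, $(L+K+3)(L+K+4)\big((L+K+2)!/(L-K)!\cdot(L-K)!/(L+K+4)!\big)^{1/2}=\big((L+K+3)(L+K+4)\big)^{1/2}\lesssim L+1$, using $K\le L$.
\end{itemize}
Together with $\sqrt{(2L+3)/2}\lesssim (L+1)^{1/2}$ and the factor $\frac{1}{2(K+1)}\le\frac12$, this gives $\sup_\theta|{_{4}}{\overline{P}}_{L}^{K}|\le c(L+1)^{3/2}$, in fact with the extra gain $(K+1)^{-1}$.

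The only delicate point is the choice of recurrence. It must eliminate $1/\sin\theta$ without producing terms whose factorial normalizations fail to telescope against the prefactor. \eqref{eq:rec} with $\mu=-(K+1)$ does exactly this, and it never divides by zero. I also need to check that the index ranges are admissible for Lohöfer's bound, namely $K+2\le L+2$ and $K\le L+2$, which is clear.
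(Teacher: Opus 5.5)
Your proof is correct and is essentially the paper's argument. The paper first rewrites $P_{L+1}^{-(K+1)}$ via \eqref{eq:negativeind} and then applies \eqref{eq:rec} with $\mu=K+1$, whereas you apply \eqref{eq:rec} directly with $\mu=-(K+1)$ and convert each term afterwards; both give the same expression, and the factorial bookkeeping together with Loh\"ofer's bound from Lemma \ref{lem:legpolybound} (and $|P_n^0|\le 1$ when $K=0$) is the same.
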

\begin{proof}
First, using \eqref{eq:negativeind} and then applying the recurrence relation \eqref{eq:rec} we obtain
    \begin{align*}
        {_{4}}{\overline{P}}_{L}^{K}\left(\theta\right) &= \sqrt{\frac{2L+3}{2} \frac{\left(L+K+2\right)!}{\left(L-K\right)!}}\sin\left(\theta\right)^{-1}P_{L+1}^{-\left(K+1\right)}\left(\cos\left(\theta\right)\right)\\
        &=\left(-1\right)^{K+1} \sqrt{\frac{2L+3}{2}\frac{\left(L-K\right)!}{\left(L+K+2\right)!}} \sin\left(\theta\right)^{-1}P_{L+1}^{K+1}\left(\cos\left(\theta\right)\right) \\
        &=\frac{\left(-1\right)^K}{2K+2} \sqrt{\frac{2L+3}{2}\frac{\left(L-K\right)!}{\left(L+K+2\right)!}}  \left( P_{L+2}^{K+2}\left(\cos\left(\theta\right)\right)+ \left(L-K+1\right)\left(L-K+2\right) P_{L+2}^K\left(\cos\left(\theta\right)\right)  \right).
    \end{align*}
  Let $0=K\leq L$. Then
  \begin{align*}
       {_{4}}{\overline{P}}_{L}^{0}\left(\theta\right) &=\frac{1}{2} \sqrt{\frac{2L+3}{2\left(L+2\right)\left(L+1\right)}} \left( P_{L+2}^2\left(\cos\left(\theta\right)\right)+\left(L+1\right)\left(L+2\right)P_{L+2}^0\left(\cos\left(\theta\right) \right) \right) 
  \end{align*}
  and from Lemma \ref{lem:boundlohöfer} and \ref{lem:legpolybound} we conclude
  \begin{align*}
      \sup_{\theta\in[0,\pi]}|{_{4}}{\overline{P}}_{L}^{0}\left(\theta\right)| &\leq \frac{1}{2} \sqrt{\frac{2L+3}{2\left(L+2\right)\left(L+1\right)}} \left( 2^{3/4} \sqrt{\left(L+4\right)\left(L+3\right)\left(L+2\right)\left(L+1\right)}+\left(L+1\right)\left(L+2\right) \right) \\
      &\leq c \left(L+1\right)^{\frac{3}{2}}
  \end{align*}
  for a constant $c>0$ independent of $L$. Similarly, for $1\leq K \leq L$ we estimate by Lemma \ref{lem:boundlohöfer}
  \begin{align*}
      \sup_{\theta\in[0,\pi]}|{_{4}}{\overline{P}}_{L}^{K}\left(\theta\right)|  &\leq \frac{1}{2K+2} \sqrt{\frac{2L+3}{2}\frac{\left(L-K\right)!}{\left(L+K+2\right)!}} & &\bigg( \frac{1}{\left(K+2\right)^\frac{1}{4}}\sqrt{\frac{\left(L+K+4\right)!}{\left(L-K\right)!}} \\
    & & &+ \frac{\left(L-K+1\right)\left(L-K+2\right)}{K^\frac{1}{4}} \sqrt{\frac{\left(L+K+2\right)!}{\left(L-K+2\right)!}}  \bigg) \\
    &\leq c \left(L+1\right)^\frac{3}{2}
  \end{align*}
  for a constant $c>0$ independent of $L$ and $K$.  
\end{proof}

We also prove estimate \eqref{eq:legbound} for the special case $i=3$.

\begin{lemma}\label{lem:i3}
    There exists a constant $c>0$ such that
    \begin{align}\label{eq:esti3}
         \sup_{\theta\in[0,\pi]}|{_{3}}{\overline{P}}_{L}^{K}\left(\theta\right)| < c \left(L+1\right)^\frac{3}{2}
    \end{align}
     for all $0 \leq K \leq L$.
\end{lemma}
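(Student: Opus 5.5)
The plan is to combine the $L^2$ normalization from Lemma \ref{lem:ortho} with a one-dimensional Sobolev (fundamental theorem of calculus) bound. For $i=3$ the definition gives
\begin{align*}
{_{3}}{\overline{P}}_{L}^{K}(\theta)= N_{L,K}\,\sin(\theta)^{-\frac12} P_{L+\frac12}^{-(K+\frac12)}(\cos(\theta)), \qquad N_{L,K}^2=(L+1)\frac{(L+K+1)!}{(L-K)!},
\end{align*}
so the degree and the order are half-integers and $\xi-\mu=L-K\in\N\cup\{0\}$. Hence Lemma \ref{lem:ortho} applies with $\xi=L+\frac12$, $\mu=K+\frac12$, and it yields
\begin{align*}
\int_0^\pi \big|{_{3}}{\overline{P}}_{L}^{K}(\theta)\big|^2 \der\theta = N_{L,K}^2\, C\left(L+\tfrac12,K+\tfrac12\right)=\frac{L+1}{K+\frac12}\le 2(L+1).
\end{align*}

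\textbf{Base case $K=0$.} This case can be done explicitly as a sanity check. By \eqref{eq:mueinhalb},
\begin{align*}
{_{3}}{\overline{P}}_{L}^{0}(\theta)=(L+1)\sqrt{\tfrac{2}{\pi}}\,\frac{\sin((L+1)\theta)}{(L+1)\sin(\theta)},
\end{align*}
whose modulus is at most $\sqrt{2/\pi}\,(L+1)$, since $|\sin((L+1)\theta)|\le (L+1)|\sin\theta|$. This already beats the required $(L+1)^{3/2}$.

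\textbf{General $K$: Sobolev step.} Write $f={_{3}}{\overline{P}}_{L}^{K}$. For any $\theta,\theta_0\in[0,\pi]$ we have $f(\theta)^2=f(\theta_0)^2+2\int_{\theta_0}^{\theta} f f'$. Averaging over $\theta_0$ gives
\begin{align*}
\sup|f|^2\le \tfrac1\pi\norm{f}_{L^2(0,\pi)}^2+2\norm{f}_{L^2(0,\pi)}\norm{f'}_{L^2(0,\pi)}.
\end{align*}
Since $\norm{f}_{L^2}^2\lesssim L+1$, it therefore suffices to show $\norm{f'}_{L^2(0,\pi)}\lesssim (L+1)^{5/2}$, which gives $\sup|f|\lesssim (L+1)^{3/2}$. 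I expect the true bound on $\norm{f'}_{L^2}$ to be $O((L+1)^{3/2})$, so there is room to spare.

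\textbf{Derivative estimate (main obstacle).} To estimate $f'$ I would differentiate using \eqref{eq:rekb}, which expresses $\frac{\der}{\der\theta}P_\xi^\mu$ through $P_\xi^{\mu\pm1}$ with coefficients of size at most $(\xi+|\mu|)^2\lesssim (L+1)^2$. The extra term $-\frac12\cos(\theta)\sin(\theta)^{-3/2}P_\xi^\mu$ coming from the prefactor is singular at the endpoints. I would remove it with \eqref{eq:rec}, which turns $\sin(\theta)^{-1}P_\xi^\mu$ into a combination of $P_{\xi+1}^{\mu+1}$ and $P_{\xi+1}^{\mu-1}$. Then every term of $f'$ is of the form (polynomial coefficient)$\,\times\sin^{-1/2}P_{\xi'}^{\mu'}$ with $\xi'-\mu'\in\N\cup\{0\}$. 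Each such term has an $L^2$ norm computable again by Lemma \ref{lem:ortho}, so the normalizing factorials cancel up to polynomial factors in $L$. For negative order $-\mu'$ after the shift, I would pass to positive order via \eqref{eq:negativeind}-type relations, or use the boundary behaviour in Lemma \ref{lem:behsing}. A cheaper alternative for this step is an energy argument: $u=\sin^{1/2}P_\xi^{\mu}$ solves
\begin{align*}
u''+\left((\xi+\tfrac12)^2-\frac{\mu^2-\frac14}{\sin^2\theta}\right)u=0,
\end{align*}
and one would estimate $\int(u/\sin)'^2$ by integration by parts. I expect the bookkeeping of the factorial ratios, and checking that the boundary terms vanish, to be the main work. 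I would try the recurrence route first, since it parallels the $i=4$ computation in Lemma \ref{lem:i4}.
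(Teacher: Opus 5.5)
\textbf{Verdict.} Your proposal is correct in outline, but it takes a different route from the paper at both key steps.

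\textbf{The paper's route.} For $1\le K\le L$ the paper uses Lemma~\ref{lem:behsing} to get $f(0)=f(\pi)=0$, hence $\sup|f|\le\sqrt{\pi}\,\norm{f'}_{L^2(0,\pi)}$. It then computes $\norm{f'}_{L^2}^2$ exactly. It expands the square from the product rule into three integrals. It rewrites the $\cot(\theta)$-term with \eqref{eq:rekrek} and the derivative with \eqref{eq:rekb}, both at fixed degree $L+\frac12$. The fixed-degree orthogonality in Lemma~\ref{lem:ortho} removes the cross terms between orders $-(K-\frac12)$ and $-(K+\frac32)$. The result is of exact order $(L+1)^3$, which is where the exponent $\frac32$ comes from.

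\textbf{Your route.} You use the exact value $\norm{f}_{L^2}^2=\frac{L+1}{K+\frac12}$ in $\sup|f|^2\le\frac1\pi\norm{f}^2+2\norm{f}\norm{f'}$. So you need no boundary behaviour. You also need only an upper bound on $\norm{f'}$ via the triangle inequality, so no orthogonality of cross terms.

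\textbf{Your recurrence route closes.} For $1\le K\le L$, every piece of $f'$ is a multiple of $\sin^{-1/2}P^{-\mu'}_{\xi'}$, possibly times the harmless factor $\cos\theta$. Here $\mu'\in\{K-\frac12,K+\frac32\}$ and $\xi'-\mu'\in\N\cup\{0\}$. The single exception is $P_{L+\frac12}^{-(K+\frac32)}$ at $K=L$, and it carries the vanishing coefficient $\xi-\mu$. Lemma~\ref{lem:ortho} then gives, up to numerical factors, the following squared norms:
\begin{itemize}
\item $\frac{(L+1)(L+K+1)(L-K+1)}{K-\frac12}$ and $\frac{(L+1)(L-K)(L+K+2)}{K+\frac32}$ for the two derivative pieces;
\item $\frac{(L+1)(L-K+2)(L-K+1)}{(K-\frac12)(2K+1)^2}$ and $\frac{(L+1)(L+K+2)(L+K+3)}{(K+\frac32)(2K+1)^2}$ for the pieces coming from \eqref{eq:rec}.
\end{itemize}
All of these are $\lesssim(L+1)^3$. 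Hence $\norm{f'}_{L^2}\lesssim(L+1)^{3/2}$, as you expected.

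\textbf{What each approach buys.} Your inequality then yields $\sup|f|\lesssim L+1$. This is sharper than the lemma and consistent with your explicit $K=0$ bound. The paper's Poincaré step, applied with the exact $\norm{f'}$, cannot give better than $(L+1)^{3/2}$. In return, the paper gets an exact formula for $\norm{f'}^2$.

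\textbf{Two small corrections.}
\begin{enumerate}
\item The termwise recurrence argument must be restricted to $K\ge1$. For $K=0$ the shifted order becomes $+\frac12$, and $\sin^{-1/2}P_{L+\frac12}^{1/2}$ behaves like $1/\sin\theta$, so it is not in $L^2$. There your explicit formula is not a sanity check but the actual proof, exactly as in the paper.
\item \eqref{eq:negativeind} is not available here, because it holds only for integer orders. Fortunately, for $K\ge1$ it is never needed.
\end{enumerate}
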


\begin{proof}
    We have 
    \begin{align*}
         {_{3}}{\overline{P}}_{L}^{K}\left(\theta\right)= \sqrt{\left(L+1\right) \frac{\left(L+K+1\right)!}{\left(L-K\right)!}}\frac{1}{\sqrt{\sin\left(\theta\right)}}P_{L+\frac{1}{2}}^{-\left(K+\frac{1}{2}\right)}\left(\cos\left(\theta\right)\right).
    \end{align*}
First, if $K=0$ and $L \geq 0$ we have by \eqref{eq:mueinhalb}
    \begin{align*}
        P_{L+\frac{1}{2}}^{-\frac{1}{2}}\left(\cos\left(\theta\right)\right)=\sqrt{\frac{2}{\pi}} \frac{\sin\left(\left(L+1\right) \theta\right)}{\left(L+1\right) \sqrt{\sin\left(\theta\right)}}
    \end{align*}
    and in particular
    \begin{align*}
         \sup_{\theta \in [0,\pi]}  |{_{3}}{\overline{P}}_{L}^{0}\left(\theta\right)|= \sup_{\theta \in [0,\pi]}  \left|\sqrt{\frac{2}{\pi}} \frac{\sin\left(\left(L+1\right)\theta\right)}{\sin\left(\theta\right)}\right| \leq \sqrt{\frac{2}{\pi}} \left(L+1\right) .
    \end{align*}
Next, let $1\leq K \leq L$. We use Lemma \ref{lem:behsing} and the fact that $L-K \in \N \cup \{0\}$ and observe that   
\begin{align*}
    \lim_{\theta \to 0 }{_{3}}{\overline{P}}_{L}^{K}\left(\theta\right)=0, \quad \lim_{\theta \to \pi }{_{3}}{\overline{P}}_{L}^{K}\left(\theta\right)=0
\end{align*}
for $1\leq K \leq L$. 
In the following we will show that $ \left\| \frac{\mathrm{d}}{\mathrm{d}\theta} \left(  {_{3}}{\overline{P}}_{L}^{K}\left(\theta\right)\right) \right\|_{L^2\left(0,\pi\right)}^2 \leq c \left(L+1\right)^3$.
Then, by Sobolev's embedding theorem and Poincarè's inequality we obtain the bound as stated in \eqref{eq:esti3}.
For readability we omit the prefactor of ${_{3}}{\overline{P}}_{L}^{K}$ depending on $K$ and $L$ in the following calculations.
By the product rule we obtain
\begin{align*}
    &\int_0^\pi \left( \frac{\mathrm{d}}{\mathrm{d}\theta} \left( \frac{1}{\sqrt{\sin\left(\theta\right)}}P_{L+\frac{1}{2}}^{-\left(K+\frac{1}{2}\right)}\left(\cos\left(\theta\right)\right) \right) \right)^2 \der \theta \\
    = &\int_0^\pi \left( \frac{\mathrm{d}}{\mathrm{d}\theta} \frac{1}{\sqrt{\sin\left(\theta\right)}}\right)^2 \left( P_{L+\frac{1}{2}}^{-\left(K+\frac{1}{2}\right)}\left(\cos\left(\theta\right)\right)\right)^2 + \left(  \frac{1}{\sqrt{\sin\left(\theta\right)}}\right)^2 \left(\frac{\mathrm{d}}{\mathrm{d}\theta}\left( P_{L+\frac{1}{2}}^{-\left(K+\frac{1}{2}\right)}\left(\cos\left(\theta\right)\right)\right)\right)^2 \\
    &+ 2 \frac{1}{\sqrt{\sin\left(\theta\right)}} P_{L+\frac{1}{2}}^{-\left(K+\frac{1}{2}\right)}\left(\cos\left(\theta\right)\right) \left(\frac{\mathrm{d}}{\mathrm{d}\theta} \frac{1}{\sqrt{\sin\left(\theta\right)}} \right)\left( \frac{\mathrm{d}}{\mathrm{d}\theta}\left( P_{L+\frac{1}{2}}^{-\left(K+\frac{1}{2}\right)}\left(\cos\left(\theta\right)\right)\right)\right) \der \theta .
\end{align*}
We calculate the three integrals separately.
For the first integral we use \ref{eq:rekrek} and then Lemma~\ref{lem:behsing}
\begin{align*}
    &\int_0^\pi \left( \frac{\mathrm{d}}{\mathrm{d}\theta}  \frac{1}{\sqrt{\sin\left(\theta \right)}}\right) \left(P_{L+\frac{1}{2}}^{-\left(K+\frac{1}{2}\right)}\left(\cos\left(\theta\right)\right)  \right)^2 \der \theta \\
    &=\int_0^\pi \frac{1}{4 \sin\left(\theta\right)} \left( \frac{\cos\left(\theta\right)}{\sin\left(\theta\right)}P_{L+\frac{1}{2}}^{-\left(K+\frac{1}{2}\right)}\left(\cos\left(\theta\right)\right) \right)^2 \der \theta \\
    &=\int_0^\pi \frac{1}{4 \sin\left(\theta\right)} \frac{1}{\left(2K+1\right)^2}\left(P_{L+\frac{1}{2}}^{-\left(K-\frac{1}{2}\right)}\left(\cos\left(\theta\right)\right)+ \left(L+K+2\right)\left(L-K\right) P_{L+\frac{1}{2}}^{-\left(K+\frac{3}{2}\right)}\left(\cos\left(\theta\right)\right)  \right)^2 \der \theta \\
    &=\frac{1}{2\left(2K+1\right)^2} \frac{1}{2K-1} \frac{\left(L-K+1\right)!}{\left(L+K\right)!}+ \frac{1}{2\left(2K+1\right)^2} \frac{1}{2K+3} \frac{\left(L-K\right)!}{\left(L+K+1\right)!} \left(L+K+2\right)\left(L-K\right) .
\end{align*}
For the second integral we use \eqref{eq:rekb} and Lemma ~ref{lem:behsing}
\begin{align*}
    &\int_0^\pi \frac{1}{\sin\left(\theta\right)} \left(\frac{\mathrm{d}}{\mathrm{d}\theta}\left( P_{L+\frac{1}{2}}^{-\left(K+\frac{1}{2}\right)}\left(\cos\left(\theta\right)\right)\right)\right)^2 \der \theta \\
    &=\int_0^\pi \frac{1}{\sin\left(\theta\right)} \frac{1}{4} \left(\left(L-K\right)\left(L+K+2\right)P_{L+\frac{1}{2}}^{-\left(K+\frac{3}{2}\right)}\left(\cos\left(\theta\right)\right) -P_{L+\frac{1}{2}}^{-\left(K-\frac{1}{2}\right)}\left(\cos\left(\theta\right)\right) \right)^2 \der \theta \\
    &=\frac{1}{2} \left(L-K\right)\left(L+K+2\right) \frac{1}{2K+3} \frac{\left(L-K\right)!}{\left(L+K+1\right)!}+ \frac{1}{2} \frac{1}{2K-1} \frac{\left(L-K+1\right)!}{\left(L+K\right)!} .
\end{align*}
For the last integral we use \eqref{eq:rekrek}, \eqref{eq:rekb} and Lemma \ref{lem:behsing}
\begin{align*}
  &\int_0^\pi  2 \frac{1}{\sqrt{\sin\left(\theta\right)}} P_{L+\frac{1}{2}}^{-\left(K+\frac{1}{2}\right)}\left(\cos\left(\theta\right)\right) \left(\frac{\mathrm{d}}{\mathrm{d}\theta} \frac{1}{\sqrt{\sin\left(\theta\right)}} \right)\left( \frac{\mathrm{d}}{\mathrm{d}\theta}\left( P_{L+\frac{1}{2}}^{-\left(K+\frac{1}{2}\right)}\left(\cos\left(\theta\right)\right)\right)\right) \der \theta \\
  &= \int_0^\pi\frac{-\cos\left(\theta\right)}{\sin^2\left(\theta\right)} P_{L+\frac{1}{2}}^{-\left(K+\frac{1}{2}\right)}\left(\cos\left(\theta\right)\right) \left( \frac{\mathrm{d}}{\mathrm{d}\theta}\left( P_{L+\frac{1}{2}}^{-\left(K+\frac{1}{2}\right)}\left(\cos\left(\theta\right)\right)\right)\right)  \der \theta \\
  &=\int_0^\pi \frac{-1}{2(2K+1) \sin\left(\theta\right) } \left( \left(P_{L+\frac{1}{2}}^{-\left(K-\frac{1}{2}\right)}\left(\cos\left(\theta\right)\right) \right)^2-\left(L-K\right)^2\left(L+K+2\right)^2\left(P_{L+\frac{1}{2}}^{-\left(K+\frac{3}{2}\right)}\left(\cos\left(\theta\right)\right)\right)^2 \right) \der \theta\\
   &=-\frac{1}{2K+1} \frac{1}{2K-1} \frac{\left(L-K+1\right)!}{\left(L+K\right)!} + \frac{1}{2K+1}\frac{1}{2K+3}\left(L-K\right)\left(L+K+2\right) \frac{\left(L-K\right)!}{\left(L+K+1\right)!} .
\end{align*}
   Altogether, including the prefactor of ${_{3}}{\overline{P}}_{L}^{K}$, we have
   \begin{align*}
       &\left\| \frac{\mathrm{d}}{\mathrm{d}\theta} \left(  {_{3}}{\overline{P}}_{L}^{K}\left(\theta\right)\right) \right\|_{L^2\left(0,\pi\right)}^2\\
       &=\frac{\left(L+1\right)}{\left(2K+1\right)^2}\left( \left(L-K+1\right)\left(L+K+1\right)\frac{2K^2}{2K-1}+\left(L-K\right)\left(L+K+2\right) \frac{2\left(K+1\right)^2}{2K+3} \right) \\
       &\leq c \left(L+1\right)^3
   \end{align*}
   as claimed.
   \end{proof}
\begin{proof}[Proof of Theorem \ref{lem:boundsh}]
    We prove the statement via induction over $i$ and distinguish the two cases $i$ even and $i$ odd in order to use recurrence formulas for the Legendre functions. We present the proof for the case when $i$ is even. The proof in the case when $i$ is odd works the same way with Lemma \ref{lem:i3} as the base case for the induction. 
    In the following, let $i$ be even: For $i=2$ and $i=4$ the statement holds true by Lemma \ref{lem:i2} and Lemma \ref{lem:i4} respectively.\\
    \textit{Induction hypothesis:} For $i \geq 2$, even, arbitrary but fixed there exists $c>0$ such that
    \begin{align*}
          \sup_{\theta \in [0,\pi]}|{_{i}}{\overline{P}}_{L}^{K}\left(\theta\right) | \leq c \left(L+1\right)^{\frac{i-1}{2}} 
    \end{align*}
    for all $0\leq K \leq L$.\\
    \textit{Inductive step:}  $i \to i+2$: \\
    We have
    \begin{align*}
         {_{i+2}}{\overline{P}}_{L}^{K}\left(\theta\right)=\sqrt{\frac{2L+i+1}{2} \frac{\left(L+K+i\right)!}{\left(L-K\right)!}} \left(\sin\left(\theta\right)\right)^{\frac{2-i}{2}} \left(\sin\left(\theta\right)\right)^{-1} P_{L+1+\frac{i-2}{2}}^{-\left(K+1+\frac{i-2}{2}\right)}\left(\cos\left(\theta\right)\right)
    \end{align*}
    and by \eqref{eq:rec} we obtain 
    \begin{align*}
        &\left(\sin\left(\theta\right)\right)^{-1} P_{L+1+\frac{i-2}{2}}^{-\left(K+1+\frac{i-2}{2}\right)}\left(\cos\left(\theta\right)\right) \\
        &=\frac{1}{2K+i} \left(P_{L+2+\frac{i-2}{2}}^{-\left(K+\frac{i-2}{2}\right)}\left(\cos\left(\theta\right) \right)+ \left(L+K+1+i\right)\left(L+K+2+i\right) P_{L+2+\frac{i-2}{2}}^{-\left(K+2+\frac{i-2}{2}\right)}\left(\cos\left(\theta\right)\right) \right).
    \end{align*}
We estimate the two terms separately and use the inductive hypothesis both times. In the following, $c>0$ is a constant only depending on $i$, which may vary from line to line. First we have
\begin{align*}
    &\left| \sqrt{\frac{2L+i+1}{2} \frac{\left(L+K+i\right)!}{\left(L-K\right)!}}\left(\sin\left(\theta\right)\right)^{\frac{2-i}{2}} \frac{1}{2K+i} P_{L+2+\frac{i-2}{2}}^{-\left(K+\frac{i-2}{2}\right)}\left(\cos\left(\theta\right) \right) \right| \\
    &= \left| {_{i}}{\overline{P}}_{L+2}^{K}\left(\theta\right) \frac{1}{2K+i} \sqrt{\frac{2L+1+i}{2L+3+i}(L+2-K)(L+1-K)} \right| \\
    &\leq c(L+3)^{\frac{i-1}{2}} \left| \frac{1}{2K+i} \sqrt{\frac{2L+1+i}{2L+3+i}(L+2-K)(L+1-K)} \right| \\
    &\leq c(L+3)^{\frac{i-1}{2}}(L+3)\\
    &\leq c(L+1)^{\frac{(i+2)-1}{2}} 
\end{align*}
and secondly
\begin{align*}
  &\left|\sqrt{\frac{2L+i+1}{2} \frac{\left(L+K+i\right)!}{\left(L-K\right)!}}\left(\sin\left(\theta\right)\right)^{\frac{2-i}{2}} \frac{1}{2K+i}  \left(L+K+1+i\right)\left(L+K+2+i\right) P_{L+2+\frac{i-2}{2}}^{-\left(K+2+\frac{i-2}{2}\right)}\left(\cos\left(\theta\right)\right)\right| \\
  &= \left| {_{i}}{\overline{P}}_{L+2}^{K+2}\left(\theta\right) \frac{1}{2K+i} \sqrt{\frac{2L+1+i}{2L+3+i}(L+K+1+i)(L+K+2+i)} \right| \\
  &\leq c (L+3)^{\frac{i-1}{2}} \left|\frac{1}{2K+i} \sqrt{\frac{2L+1+i}{2L+3+i}(L+K+1+i)(L+K+2+i)} \right| \\
  &\leq c (L+3)^{\frac{i-1}{2}} (L+3)\\
  &\leq c (L+1)^{\frac{(i+2)-1}{2}}.
  \end{align*}
Altogether we have
\begin{align*}
   | {_{i+2}}{\overline{P}}_{L}^{K}\left(\theta\right) | \leq c \left(L+1\right)^{\frac{i+1}{2}}
\end{align*}
where $c$ only depends on $i$. Since $1\leq i \leq N$ and $N$ is fixed, this finishes the proof of Lemma~\ref{lem:boundsh}.
\end{proof}
Altogether this concludes the proof of Example \ref{ex:wl}.

Lastly we show that the Examples \ref{lem:multistepcond}--\ref{lem:interface} satisfy the conditions \ref{as:dgen}--\ref{as:last}. 
First, we observe that assumptions \ref{as:dgen} and \ref{as:dper} are already satisfied by definition for all three Examples \ref{lem:multistepcond}--\ref{lem:interface}.
Note that in the case of the purely periodic potential, Example \ref{lem:multistepcond}, condition \ref{as:dpointspec} holds trivially since $\sigma_p(- \tfrac{1}{d(t)} \dtsquare)=\emptyset$ by Theorem 5.3.1 in \cite{Eastham}.
The eigenvalues in Example \ref{ex:qho} and \ref{ex:wl} are of the form $\nu_n=(2n+1)^2$ and $\nu_n=(2n+N-1)^2$, respectively, for $n \in \No$.
Therefore, Example \ref{lem:multistepcond}, Example \ref{ex:dislint} and Example \ref{lem:interface} satisfy the assumptions \ref{as:dpointspec}--\ref{as:last}, which follows from Lemma C.1, Lemma C.6 and Lemma C.7 in \cite{hor}, respectively, by choosing $\omega=1$ therein.

    \section{Proofs of embedding results} \label{sec:approof}

In this section we prove the embedding results from Theorem \ref{theom:emb} and \ref{lem:cct}.
We begin with a useful estimate.

\begin{lemma}\label{lem:est}
    There exists $c>0$ such that 
\begin{align*}
    b_{|L_n|}(v,v)=\norm{v}_{\Hs_n}^2\geq c( \sqrt{\nu_n} \norm{v}_{L^2_d(\R)}^2+\frac{1}{\sqrt{\nu_n}} \norm{v'}_{L^2(\R)}^2)
\end{align*}
for every $n \in \N$ and $v \in H^1(\R)$.
\end{lemma}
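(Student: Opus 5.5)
We work entirely on the spectral side of the operator $A\coloneqq -\frac{1}{d(t)}\dtsquare$ on $L^2_d(\R)$ with resolution $P_\lambda$ as in \eqref{eq:specres}, which is supported in $[0,\infty)$ (since $A\ge 0$). For $v\in H^1(\R)$ we have the two identities
\begin{align*}
\norm{v}_{\Hs_n}^2=\int_\R |\nu_n-\lambda| \der\norm{P_\lambda v}_{L^2_d}^2, \qquad \norm{v'}_{L^2(\R)}^2=\int_\R \lambda \der\norm{P_\lambda v}_{L^2_d}^2 ,
\end{align*}
the second because the form of $A$ on $L^2_d$ is exactly $\int |v'|^2\dt$ (the weight $d$ cancels). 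So the claim reduces to the pointwise spectral inequality
\begin{align*}
|\nu_n-\lambda| \ \ge\ c\Big(\sqrt{\nu_n}+\frac{\lambda}{\sqrt{\nu_n}}\Big) \quad \text{for all } \lambda\in\sigma(A),\ n\in\N,
\end{align*}
with $c$ independent of $n$; integrating against $\der\norm{P_\lambda v}^2$ then gives the lemma.

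To prove the pointwise inequality, write $|\nu_n-\lambda|=|\sqrt\lambda-\sqrt{\nu_n}|(\sqrt\lambda+\sqrt{\nu_n})$ and let $\delta>0$ be the gap from \ref{as:last}, so $|\sqrt\lambda-\sqrt{\nu_n}|\ge\delta$. Note $\sqrt{\nu_n}+\lambda/\sqrt{\nu_n}=(\nu_n+\lambda)/\sqrt{\nu_n}\le(\sqrt{\nu_n}+\sqrt\lambda)^2/\sqrt{\nu_n}$. Hence it suffices to show $|\sqrt\lambda-\sqrt{\nu_n}|\ge c\,(\sqrt\lambda+\sqrt{\nu_n})/\sqrt{\nu_n}$. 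Split into two cases. If $\sqrt\lambda\le 2\sqrt{\nu_n}$, the right side is at most $3c$, so $c=\delta/3$ works. If $\sqrt\lambda>2\sqrt{\nu_n}$, then $|\sqrt\lambda-\sqrt{\nu_n}|\ge\sqrt\lambda/2$, while $(\sqrt\lambda+\sqrt{\nu_n})/\sqrt{\nu_n}\le \tfrac32\sqrt\lambda/\sqrt{\nu_n}\le\tfrac32\sqrt\lambda/\sqrt{\nu_1}$; this needs $\nu_1>0$, i.e. a uniform lower bound $\sqrt{\nu_n}\ge\sqrt{\nu_1}>0$.

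The main obstacle is that lower bound on $\nu_n$. If $\nu_1=0$, assumption \ref{as:last} with $\lambda\in\sigma(A)$ forces $\inf\sigma(A)\ge\delta^2>0$, but then the factor $\sqrt{\nu_n}$ in the lemma vanishes for $n=1$ and the $1/\sqrt{\nu_n}$ term is undefined. So we would note (as in the examples, where $\nu_n\ge1$) that necessarily $\nu_1>0$ for the statement to make sense, or handle $n=1$ separately using $\lambda\ge\delta^2$. Everything else is a routine consequence of the functional calculus, plus a density argument from $H^2(\R)$ to $H^1(\R)=D(A^{1/2})$ to justify the identification of $\norm{v'}^2$ with $\int\lambda\,\der\norm{P_\lambda v}^2$.
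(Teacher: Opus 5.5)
Your argument is correct and is essentially the paper's proof, written pointwise in the spectral parameter. The paper treats $v\in\Hs_n^+$ (spectral values $\lambda<\nu_n$) and $v\in\Hs_n^-$ ($\lambda>\nu_n$) separately, combining \eqref{eq:specest} with the sign of $b_{L_n}(v,v)=\int_\R -|v'|^2+\nu_n d(t)|v|^2\dt$ on each piece; this is just the integrated form of your inequality $|\nu_n-\lambda|\geq c\bigl(\sqrt{\nu_n}+\lambda/\sqrt{\nu_n}\bigr)$, and your version has the small advantage of covering $v=v^++v^-$ directly.

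The point you flag as the main obstacle is in fact excluded by the hypotheses. Since $d\in L^\infty(\R)$, the functions $\phi(t/R)$ with $R\to\infty$ show $\inf\sigma\bigl(-\frac{1}{d(t)}\dtsquare\bigr)=0$, so $0$ lies in the spectrum, and \ref{as:last} with $\lambda=0$ gives $\sqrt{\nu_n}\geq\delta>0$ for all $n$. Hence $c=\min\{\delta,\sqrt{\nu_1}\}/3$ is a valid uniform constant, and your alternative of handling $\nu_1=0$ separately never arises. The paper uses this lower bound on $\nu_n$ tacitly in its $\Hs_n^-$ step as well.
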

\begin{proof}
    By \eqref{eq:specest} we have
\begin{align}\label{eq:estnorm}
    b_{|L_n|}(v,v)= \norm{v}_{\Hs_n}^2 \geq c \sqrt{\nu_n}\norm{v}_{L^2_d(\R)}^2
\end{align}
for all $n \in \N$ and $v \in H^1(\R)$.

Now let $v \in \Hs_n^+$, then
\begin{align*}
 0 \leq     b_{|L_n|}(v,v)=b_{L_n}(v,v) = \int_\R - |v'|^2 + \nu_n d(t) |v|^2 \dt 
\end{align*}
and hence
\begin{align*}
    \nu_n \norm{v}_{L^2_d(\R)}^2 \geq \norm{v'}_{L^2(\R)}^2
\end{align*}
which implies together with \eqref{eq:estnorm}
\begin{align*}
    b_{L_n}(v,v)\geq \sqrt{\nu_n}\norm{v}_{L^2_d(\R)}^2\geq \frac{1}{\sqrt{\nu_n}}\norm{v'}_{L^2(\R)}^2.
\end{align*}
For $v \in \Hs_n^-$ we have together with \eqref{eq:estnorm}
\begin{align*}
    b_{|L_n|}(v,v)=b_{-L_n}(v,v)= \int_\R  |v'|^2 - \nu_n d(t) |v|^2 \dt \geq \sqrt{\nu_n}\norm{v}_{L^2_d(\R)}^2
\end{align*}
which implies
\begin{align*}
    \frac{\nu_n}{\nu_n + \sqrt{\nu_n}}\norm{v'}_{L^2(\R)}^2 \geq \nu_n \norm{v}_{L^2_d(\R)}^2
\end{align*}
and therefore
\begin{align*}
    b_{-L_n}(v,v)= \int_\R  |v'|^2 - \nu_n d(t) |v|^2 \dt \geq \int_\R  |v'|^2 - \frac{\nu_n}{\nu_n + \sqrt{\nu_n}}  |v'|^2 \dt \geq c \frac{1}{\sqrt{\nu_n}} \norm{v'}_{L^2(\R)}^2 .  
    &\qedhere
\end{align*}
\end{proof}

\begin{proof}[Proof of Theorem \ref{theom:emb}]
The continuity and local compactness of $\Hs \hookrightarrow L^q_{Vd}(\R^N\times \R)$ follows from Theorem 3.27 in \cite{hor}. Note that in \cite{hor} the eigenvalues of the operator with purely discrete spectrum are counted with respect to their multiplicity. For our purposes we do not count the eigenvalues of $\frac{1}{V}M$ with respect to their multiplicity, instead we introduced the dimension $d_n$ of the eigenspaces to the eigenvalues $\nu_n$ and adapted the assumption from Theorem 3.27 in \cite{hor} resulting in assumption \ref{as:efestimateM}.
It remains to show the compactness of the embedding global in space and local in time. Let $\tau >0$.
We show that $\id: \Hs \hookrightarrow L^2_V(\R^N \times [-\tau, \tau])$ is compact.
This then implies, together with Hölder's inequality and the boundedness of the embedding $H \hookrightarrow L_{Vd}^q(\R^N\times [-\tau,\tau])$, the compactness of the embedding for the remaining cases of $q$.

For $K \in \N$ fixed we define the space
\begin{align*}
    \Hs^K \coloneqq \{ u (x,t)\in L^2_{Vd}(\R^N\times \R) : u(x,t)=\sum_{n=1}^K \sum_{l=1}^{d_n} u_{n,l}(t) \varphi_{n,l}(x) \text{ and }  \norm{u}_{\Hs^K}< \infty \} ,
\end{align*}
where
\begin{align*}
    \norm{u}_{\mathcal{H}^K}^2= \sum_{n=1}^K \sum_{l=1}^{d_n}\norm{u_{n,l}}_{\Hs_n}^2
\end{align*}
and the operator 
\begin{align*}
    \id^K : \Hs^K \to L^2_{Vd}(\R^n \times \R), \quad  \id^K u=\sum_{n =1}^K \sum_{l=1}^{d_n} u_{n,l}(t) \varphi_{n,l}(x) .
\end{align*}
We show that $\id^K : \Hs^K \to L^2_{Vd}(\R^n \times [-\tau, \tau] )$ is compact.
Let $(u^j)_{j \in \N}$ be a bounded sequence in $\Hs^K$, meaning $\norm{u^j}_{\Hs^K}\leq c$ for all $j \in \N$.
This implies in particular that $\norm{u_{n,l}^j}_{\Hs_n} \leq c$ for all $1 \leq n \leq  K$, $1\leq l \leq d_n$ and $j \in \N$.
Further by Lemma \ref{lem:est} we have
\begin{align*}
    \norm{u_{n,l}^j}_{H^1(\R)} \leq \sqrt{\nu_K} \norm{u_{n,l}^j}_{\Hs_n}
\end{align*}
and hence $\norm{u_{n,l}^j}_{H^1(\R)}\leq c$ for all $1 \leq n \leq  K$, $1 \leq l \leq d_n$ and $j \in \N$.
By the compactness of the embedding $H^1(\R) \hookrightarrow L^2[-\tau,\tau]$, each sequence $(u_{n,l}^j)_{j \in \N}$, $1 \leq n \leq  K$, $1 \leq l \leq d_n$ has a convergent subsequence in $L^2[-\tau , \tau] $.
Now, we choose a subsequence $(u_{n,l}^{j_m})_{m \in \N}$ of $(u_{n,l}^j)_{j \in \N}$ such that $u_{n,l}^{j_m} \to u_{n,l}$ in $L^2[- \tau, \tau]$ as $m \to \infty$ for all $1 \leq n \leq  K$, $1 \leq l \leq d_n$. 

Hence 
\begin{align*}
    \norm{\sum_{n=1}^K \sum_{l=1}^{d_n} u_{n,l}^{j_m}(t)\varphi_{n,l}(x) -\sum_{n=1}^K \sum_{l=1}^{d_n} u_{n,l}(t)\varphi_{n,l}(x) }_{L^2(\R^N \times [-\tau,\tau])}^2 \to 0
\end{align*}
as $m \to \infty$ which shows the compactness of $\id^K$.

Lastly we show that $\id^K$ converges to $\id:\Hs \to L^2_{Vd}(\R^n \times \R)$ in operator norm and hence $\id$ is compact.
From \eqref{eq:specest} we infer
\begin{align*}
    \norm{(\id^K-\id)u}_{L^2_{Vd}(\R^N \times \R)}^2 =\sum_{n=K+1}^\infty \sum_{l=1}^{d_n} \norm{u_{n,l}}_{L^2_d(\R)}^2 
  &\leq \sum_{n=K+1}^\infty \sum_{l=1}^{d_n} \frac{1}{\sqrt{\nu_{K+1}}} \norm{u_{n,l}}_{\mathcal{H}_n}^2
    &\leq \frac{c}{\sqrt{\nu_{K+1}}} \norm{u}_{\Hs}^2
\end{align*}
where $\nu_{K+1} \to \infty$ as $K \to \infty$.
\end{proof}

The proof of Lemma \ref{lem:cct} is done similarly as in \cite{maier_reichel_schneider}. We define the intermediate spaces 
\begin{align*}
H \coloneqq \{ u(x,t)=\sum_{n \in \N} \sum_{l=1}^{d_n} u_{n,l}(t) \varphi_{n,l}(x) : \| u \|_H^2 \coloneqq \sum_{n \in \N} \sum_{l=1}^{d_n} \sqrt{\nu_n} \|u_{n,l}\|_{L^2_d(\R)}^2 + \frac{1}{\sqrt{\nu_n}} \|u_{n,l}' \|_{L^2(\R)}^2 < \infty \}
\end{align*}
and
\begin{align*}
 X_r \coloneqq \{ u(x,t)= \sum_{n \in \N} \sum_{l=1}^{d_n} u_{n,l}(t) \varphi_{n,l}(x) : \| u \|_{X_r}^{r'}\coloneqq \sum_{n \in \N} \sum_{l=1}^{d_n}  \norm{\frac{u_{n,l}}{s_{n,l}} }_{L^r_d(\R)}^{r'} s_{n,l}^2 < \infty \}
\end{align*}
for $r \in (1, \infty]$ with $\frac{1}{r}+ \frac{1}{r'}=1$ and $s_{n,l} \coloneqq \| \varphi_{n,l}\|_\infty$ for $1 \leq l \leq d_n$.
The space $X_r$ captures the growth of the eigenfunctions $\varphi_{n,l}$ in $n$ according to our assumption \ref{as:efestimateM}.
\begin{lemma}\label{lem:tripemb}
    The three embeddings 
    \begin{align*}
 \Hs  \hookrightarrow  H  \hookrightarrow X_r \hookrightarrow  L^r_V(\R^N \times \R) 
\end{align*}
are continuous for $ r \in [2, \frac{4(1+\alpha)}{2\alpha+1} )$.
\end{lemma}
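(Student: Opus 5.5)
The plan is to prove the three embeddings separately. Throughout, I use that $d$ is bounded above and below by \ref{as:dgen}, so $L^r_d(\R)$ and $L^r(\R)$ have equivalent norms.

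\emph{First embedding $\Hs\hookrightarrow H$.} This is immediate from Lemma \ref{lem:est}. For every $n$ and $l$,
\begin{align*}
\norm{u_{n,l}}_{\Hs_n}^2\geq c\left(\sqrt{\nu_n}\norm{u_{n,l}}_{L^2_d}^2+\tfrac{1}{\sqrt{\nu_n}}\norm{u_{n,l}'}_{L^2}^2\right)
\end{align*}
with $c$ independent of $n$. Summing over $n$ and $l$ gives $\norm{u}_H^2\leq c^{-1}\norm{u}_\Hs^2$.

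\emph{Second embedding $H\hookrightarrow X_r$.} I argue mode by mode and then sum. Write $a_{n,l}^2\coloneqq\sqrt{\nu_n}\norm{u_{n,l}}_{L^2}^2+\nu_n^{-1/2}\norm{u_{n,l}'}_{L^2}^2$ for the contribution of one mode to $\norm{u}_H^2$. The one-dimensional Gagliardo--Nirenberg inequality gives $\norm{v}_{L^r}\lesssim\norm{v}_2^{\frac12+\frac1r}\norm{v'}_2^{\frac12-\frac1r}$. Rescaling with the factors $\nu_n^{\pm1/4}$ and using Young's inequality then yields
\begin{align*}
\norm{u_{n,l}}_{L^r}^2\lesssim\nu_n^{-1/r}a_{n,l}^2 .
\end{align*}
Hence $\norm{u_{n,l}/s_{n,l}}_{L^r}^{r'}s_{n,l}^2\lesssim b_{n,l}\,a_{n,l}^{r'}$ with $b_{n,l}\coloneqq s_{n,l}^{2-r'}\nu_n^{-r'/(2r)}$. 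For $r>2$ we have $r'<2$, so Hölder's inequality with exponents $2/r'$ and $2/(2-r')$ gives
\begin{align*}
\norm{u}_{X_r}^{r'}\lesssim\Big(\sum a_{n,l}^2\Big)^{r'/2}\Big(\sum b_{n,l}^{2/(2-r')}\Big)^{(2-r')/2}.
\end{align*}
A short computation gives $b_{n,l}^{2/(2-r')}=s_{n,l}^2\,\nu_n^{-1/(r-2)}$. It therefore remains to show
\begin{align*}
\sum_n\sum_{l=1}^{d_n}s_{n,l}^2\,\nu_n^{-1/(r-2)}<\infty .
\end{align*}
By \ref{as:efestimateM}, $\sum_l s_{n,l}^2\leq c\,\nu_n^{\alpha}$. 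By \ref{as:evM}, at most $K$ of the numbers $\sqrt{\nu_n}$ lie in each unit interval, so $\sqrt{\nu_n}\gtrsim n$ for large $n$. The series is then dominated by $\sum_n n^{2\alpha-2/(r-2)}$. This converges exactly when $\frac{2}{r-2}-2\alpha>1$, i.e. $r<\frac{4(1+\alpha)}{2\alpha+1}$, which is the stated range. The case $r=2$ is trivial, since there $\norm{u}_{X_2}^2=\sum\norm{u_{n,l}}_{L^2_d}^2$. This counting step is where the hypotheses enter, and I expect it to be the main point to get right.

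\emph{Third embedding $X_r\hookrightarrow L^r_V$.} I prove the two endpoints and interpolate.
\begin{itemize}
\item For $r=2$, orthonormality of $(\varphi_{n,l})$ in $L^2_V$ gives $\norm{u}_{L^2_{Vd}}=\norm{u}_{X_2}$.
\item For $r=\infty$ (so $r'=1$),
\begin{align*}
|u(x,t)|\leq\sum_{n,l}|u_{n,l}(t)|\,s_{n,l}\leq\sum_{n,l}\norm{u_{n,l}/s_{n,l}}_\infty s_{n,l}^2=\norm{u}_{X_\infty}.
\end{align*}
\end{itemize}
The synthesis map $(u_{n,l})\mapsto\sum u_{n,l}\varphi_{n,l}$ is linear. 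The spaces $X_r$ are weighted mixed-norm spaces $\ell^{r'}_{s^2}(L^r_{1/s})$, and the exponents interpolate consistently: $\frac1r=\frac{1-\theta}{2}$ and $\frac1{r'}=\frac{1-\theta}{2}+\theta$. Complex interpolation of such weighted spaces (Stein--Weiss together with the vector-valued $\ell^p(L^q)$ version) therefore gives boundedness $X_r\to L^r_{Vd}$ for all $r\in[2,\infty]$, and hence into $L^r_V$. Some care is needed to check that the interpolation space is exactly $X_r$ with the weights $s_{n,l}$. If this becomes delicate, I would instead apply Riesz--Thorin to finitely supported sequences and pass to the limit by density.
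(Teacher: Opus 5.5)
Your proof is correct and follows the paper's argument step for step. The first embedding $\Hs\hookrightarrow H$ comes from Lemma \ref{lem:est}. The second, $H\hookrightarrow X_r$, comes from Gagliardo--Nirenberg plus Hölder and the summability of $\sum_{n,l}s_{n,l}^2\nu_n^{-1/(r-2)}$ via \ref{as:evM}--\ref{as:efestimateM}; your Young-then-two-factor-Hölder route gives exactly the bound the paper obtains with a three-factor Hölder. The third, $X_r\hookrightarrow L^r_V$, comes from the endpoints $r=2,\infty$ plus interpolation.

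On your caveat in the last step: after substituting $g_{n,l}=u_{n,l}/s_{n,l}$, the index weight $s_{n,l}^2$ is the same at both endpoints. So $X_r$ is simply $\ell^{r'}(L^r_d)$ over a fixed measure, and the standard vector-valued complex interpolation identity gives exactly $X_r$ without Stein--Weiss. This is what the paper's appeal to Riesz--Thorin implicitly uses.
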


\begin{proof}
Step 1: The continuity of $\Hs  \hookrightarrow  H$ follows from the estimate in Lemma \ref{lem:est}.

Step 2: We show $H \hookrightarrow X_r$ is bounded for $r \in [2,\frac{4(1+\alpha)}{2 \alpha+1})$.
First, for $r=2$ we have trivially
\begin{align*}
\|u\|_{X_2}^2 = \sum_{n \in \N} \sum_{l=1}^{d_n} \| u_{n,l} \|_{L^2_d(\R)}^{2}  \leq \sum_{n \in \N} \sum_{l=1}^{d_n} \sqrt{\nu_n} \|u_{n,l}\|_{L^2_d(\R)}^2 + \frac{1}{\sqrt{\nu_n}} \|u_{n,l}' \|_{L^2(\R)}^2 = \|u\|_{H}^2 .
\end{align*}
Now let $r \in (2,\frac{4(1+\alpha)}{2 \alpha+1}) $ and $v \in H^1(\R)$. Then by Gagliardo-Nirenberg inequality we have
\begin{align*}
\| v \|_{L^r_d(\R)}\leq c_{GN} \|v'\|_{L^2(\R)}^\theta \|v\|_{L^2_d(\R)}^{1- \theta}
\end{align*}
where $\theta=\frac{1}{2}-\frac{1}{r}$. Together with Hölder's inequality with the exponents $\frac{r+2}{4(r-1)}+ \frac{r-2}{4(r-1)}+\frac{r-2}{2(r-1)}=1$ we conclude
\begin{align*}
\|u\|_{X_r}^{r'}&= \sum_{n \in \N}\sum_{l=1}^{d_n} \norm{\frac{u_{n,l}}{s_{n,l}} }_{L^r_d(\R)}^{r'} s_{n,l}^2 \leq c_{GN} \sum_{n \in \N} \sum_{l=1}^{d_n} \norm{\frac{u_{n,l}}{s_{n,l}} }_{L^2_d(\R)}^{(1- \theta)r'} \norm{ \frac{u_{n,l}'}{s_{n,l}} }_{L^2(\R)}^{\theta r'} s_{n,l}^2 \\
&= c_{GN} \sum_{n \in \N} \sum_{l=1}^{d_n} \norm{\frac{u_{n,l}}{s_{n,l}} }_{L^2_d(\R)}^{\frac{r+2}{2(r-1)}} \norm{\frac{u_{n,l}'}{s_{n,l}} }_{L^2(\R)}^{\frac{r-2}{2(r-1)}} s_{n,l}^2 \\
&= c_{GN} \sum_{n \in \N} \sum_{l=1}^{d_n} \norm{\frac{u_{n,l}}{s_{n,l}} }_{L^2_d(\R)}^{\frac{r+2}{2(r-1)}} (\nu_n)^{\frac{r+2}{8(r-1)}} \norm{\frac{u_{n,l}'}{s_{n,l}} }_{L^2(\R)}^{\frac{r-2}{2(r-1)}} (\nu_n)^{-\frac{r-2}{8(r-1)}} s_{n,l}^2 (\nu_n)^{-\frac{1}{2(r-1)}} \\
&\leq c \left( \sum_{n \in \N} \sum_{l=1}^{d_n}  \norm{\frac{u_{n,l}}{s_{n,l}}}_{L^2_d(\R)}^{2} \sqrt{\nu_n} s_{n,l}^2 \right)^{\frac{r+2}{4(r-1)}} \left( \sum_{n \in \N} \sum_{l=1}^{d_n}  \norm{\frac{u_{n,l}'}{s_{n,l}}}_{L^2(\R)}^{2} (\sqrt{\nu_n})^{-1} s_{n,l}^2 \right)^{\frac{r-2}{4(r-1)}} \\
& \ \ \ \ \cdot \left( \sum_{n \in \N} d_n (\nu_n)^{-\frac{1}{r-2}} s_{n,l}^2 \right)^{\frac{r-2}{2(r-1)}}\\
&\leq \|u\|_H^{\frac{r}{r-1}} \left( \sum_{n \in \N}  d_n (\nu_n)^{-\frac{1}{r-2}} s_{n,l}^2 \right)^{\frac{r-2}{2(r-1)}} .
\end{align*}
By our assumptions \ref{as:efestimateM} and \ref{as:evM} we have 
\begin{align*}
    \sum_{n \in \N} d_n  (\nu_n)^{-\frac{1}{r-2}} s_{n,l}^2  \leq c\sum_{n \in \N}  (\nu_n)^{-\frac{1}{r-2}} \nu_n^\alpha 
\end{align*}
which is finite if $-\frac{1}{r-2}+\alpha < - \frac{1}{2}$, i.e. $r< \frac{4(1+\alpha)}{2 \alpha+1}$.

Step 3: We show that $X_r \hookrightarrow L^r_{Vd}(\R^N \times \R)$ is bounded for $r\in [2,\infty]$.
First let $r=r'=2$. Then by Parseval's equality we obtain
\begin{align*}
\|u\|_{X_2}^2=  \sum_{n \in \N} \sum_{l=1}^{d_n} \| u_{n,l} \|_{L^2_d(\R)}^{2}  =\|u\|_{L^2_{Vd}}^2 .
\end{align*}
Next, for $r= \infty$ and $r'=1$ we estimate
\begin{align*} 
\|u\|_{L^\infty} &=\esssup_{(x,t) \in \R^N\times\R}|u(x,t)| =\esssup_{(x,t) \in \R^N\times\R} \left| \sum_{n \in \N}  \sum_{l=1}^{d_n}\frac{u_{n,l}(t)}{s_{n,l}} \varphi_{n,l}(x)s_{n,l}\right| \\
&\leq \esssup_{(x,t) \in \R^N\times\R} \sum_{n \in \N}  \sum_{l=1}^{d_n}\left|\frac{u_{n,l}(t)}{s_{n,l}}\right| |\varphi_{n,l}(x)||s_{n,l}| \\
&\leq \esssup_{t \in \R}\sum_{n \in \N} \sum_{l=1}^{d_n} \left|\frac{u_{n,l}(t)}{s_{n,l}}\right| |s_{n,l}|^2 \\
&\leq \sum_{n \in \N} \sum_{l=1}^{d_n} \esssup_{t \in \R}\left|\frac{u_{n,l}(t)}{s_{n,l}}\right| |s_{n,l}|^2 \\
&= \|u\|_{X_\infty} .
\end{align*}
By Riesz-Thorin interpolation theorem $X_r \to L^r_{Vd}$ is bounded for all $r \in [2, \infty]$.
\end{proof}
\begin{proof}[Proof of Theorem \ref{lem:cct}]
We show the result for $q=2$ and $\tilde{q}\in(2,\frac{2 \alpha+3}{1+\alpha})$.
This then implies, together with Hölder's inequality and the boundedness of the embedding from Theorem~\ref{theom:emb}, the result for the remaining cases of $q$ and $\tilde{q}$.

In general, for $2<s<r< \frac{4(1+\alpha)}{2 \alpha+1}$ and $\theta \in (0,1)$ satisfying $\frac{1}{s}=\frac{\theta}{r}+\frac{1-\theta}{2}$, we have by Hölder's inequality
\begin{align*}
    \|u\|_{L^s}^s \leq \|u\|_{L^r}^{\theta s} \|u\|_{L^2}^{(1-\theta) s} .
\end{align*}
Now, for $r \in (2, \frac{4(1+\alpha)}{2 \alpha +1}) $ and $s=\frac{4(r-1)}{r}$, we have $2<s<r<\frac{4(1+\alpha)}{2 \alpha +1}$, $2<s<\frac{2\alpha+3}{1+\alpha}$ and $\theta=\frac{2}{s}$.

Therefore
\begin{align*}
    \|u\|_{L^s_V(\R^N \times \R)}^s &= \sum_{m \in \Z} \|u\|_{L^s_V(\R^N \times [mT,(m+1)T)]}^s \\
    &\leq \sup_{m \in \Z} \|u\|_{L^2_V(\R^N \times [mT,(m+1)T])}^{s-2} \sum_{m \in \Z} \|u\|_{L^r_V(\R^N \times [mT,(m+1)T])}^2.
\end{align*}
It remains to prove 
\begin{align*}
    \sum_{m \in \Z} \|u\|_{L^r_V(\R^N \times [mT,(m+1)T])}^2 \leq \| u \|_\Hs^2
\end{align*}
for $r \in (2, \frac{4(1+\alpha)}{2 \alpha +1})$ and $u \in \mathcal{H}$. 
For that, take $\phi_m \in C_c^\infty(\R)$, where $m \in \Z$, such that
\begin{align*}
\phi_m(t)= \begin{cases} 1, & t \in [mT,(m+1)T], \\
0, & t \not \in [(m-1)T, (m+2)T], \\
\in [0,1], & \text{elsewhere}
\end{cases}
\end{align*}
and $0 \leq |\phi_m'| \leq \frac{2}{T}$. Then, by Lemma \ref{lem:tripemb}
\begin{align*}
 &\sum_{m \in \Z} \|u\|_{L^r_V(\R^N \times [mT,(m+1)T])}^2 \leq \sum_{m \in \Z} \|u \phi_m\|_{L^r_{Vd}}^2  
\leq \sum_{m \in \Z} \|u \phi_m\|_{H}^2  \\
&=\sum_{m \in \Z} \left( \sum_{n \in \N} \sum_{l=1}^{d_n} \sqrt{\nu_n} \|u_{n,l} \phi_m\|_{L^2_d(\R)}^2 + \frac{1}{\sqrt{\nu_n}} \|(u_{n,l} \phi_m)' \|_{L^2(\R)}^2 \right) \\
&=\sum_{m \in \Z} \left( \sum_{n \in \N} \sum_{l=1}^{d_n} \sqrt{\nu_n} \|u_{n,l} \phi_m\|_{L^2_d(\R)}^2 + \frac{1}{\sqrt{\nu_n}} \|u_{n,l}' \phi_m +u_{n,l} \phi_m' \|_{L^2(\R)}^2 \right) \\
&\leq 3 \sum_{n \in \N} \sum_{l=1}^{d_n}\sqrt{\nu_n} \|u_{n,l} \|_{L^2_d(\R)}^2 + \sum_{m \in \Z} \sum_{n \in \N} \sum_{l=1}^{d_n} \frac{2}{\sqrt{\nu_n}} (  \|u_{n,l}' \phi_m  \|_{L^2(\R)}^2 + \|u_{n,l} \phi_m' \|_{L^2(\R)}^2) \\
&\leq c \|u\|_H^2 \\
&\leq \|u\|_\Hs^2 .
\end{align*}
\end{proof}
  %  \input{zeugs}
 %%%%%%%%%%%%%%%%%%%%%%%%%%%%%%%%%%%%%%%%%%%%%%%%%%%%%%%%%%%%%%  
 
	%\input{functional_calc.tex}

	%\appendix

	%\input{vector_L2.tex}

	%\input{eigenfunction_bounds.tex}

    %\input{examples}

	%\todoin{Nicht vergessen, hilfreiche Personen zu acknowledgen! (Plum, Robert, Xian)}

%	Funded by the Deutsche Forschungsgemeinschaft (DFG, German Research Foundation) – Project-ID 258734477 – SFB 1173
  \section*{Acknowledgments}
  The author is grateful to W. Reichel and S. Ohrem for their valuable feedback and support of this work.
	Funded by the Deutsche Forschungsgemeinschaft (DFG, German Research Foundation) – Project-ID 258734477 – SFB 1173.
	\printbibliography
\end{document}